\newcommand{\Co}{{\mathbb C}}
\newtheorem{theorem}{Theorem}
\newtheorem{lemma}[theorem]{Lemma}
\newtheorem{proposition}[theorem]{Proposition}
\newtheorem{remark}[theorem]{Remark}
\newtheorem{example}[theorem]{Example}
\newtheorem{corollary}[theorem]{Corollary}
\theoremstyle{definition}
\newtheorem{definition}[theorem]{Definition}
\newtheorem*{theoremA}{Theorem A}
\newtheorem*{theoremB}{Theorem B}
\newtheorem*{theoremC}{Theorem C}
\newtheorem*{question}{Open question}
\newtheorem*{conjecture}{Conjecture}
\newenvironment{Proof}[1][Proof.]{\begin{trivlist}
\item[\hskip \labelsep {\bfseries #1}]}{\flushright
$\Box$\end{trivlist}}
\newcommand{\aut}[1]{\operatorname{\mathrm{Aut}}{#1}}
\newcommand{\cA}{\mathcal{A}}
\newcommand{\cB}{\mathcal{B}}
\newcommand{\R}{\mathcal{R}}
\newcommand{\cS}{\mathcal{S}}
\newcommand{\cT}{\mathcal{T}}
\newcommand{\V}{\mathfrak{V}}
\newcommand{\nil}{\mathfrak{Nil}}
\newcommand{\la}{\langle}
\newcommand{\ra}{\rangle}
\newcommand{\La}{\Big\langle}
\newcommand{\Ra}{\Big\rangle}
\newcommand{\Dt}[2]{\Delta_{#1#2}}
\newcommand{\Dl}[2]{[\Delta_{#1#2}]}
\newcommand{\0}{\theta}
\newcommand{\af}{\alpha}
\newcommand{\gm}{\gamma}
\newcommand{\vf}{\varphi}
\newcommand{\ann}[1]{\operatorname{\mathrm{Ann}}{#1}}
\begin{document}

{\Large\noindent 
The geometric classification of nilpotent algebras
\footnote{
The authors thank  Alexander Guterman for some constructive discussions about the length of algebras.
The work was partially  supported by  
CNPq 404649/2018-1, 302980/2019-9; RFBR 20-01-00030 and by CMUP, which is financed by national funds through FCT---Funda\c c\~ao para a Ci\^encia e a Tecnologia, I.P., under the project with reference UIDB/00144/2020.
 }
}

\ 

   {\bf Ivan Kaygorodov$^{a},$ Mykola Khrypchenko$^{b}$ \&   Samuel A.\ Lopes$^{c}$  \\

    \medskip
}

{\tiny

$^{a}$ CMCC, Universidade Federal do ABC, Santo Andr\'e, Brazil

$^{b}$ Departamento de Matem\'atica, Universidade Federal de Santa Catarina, Florian\'opolis, Brazil
 
$^{c}$ CMUP, Faculdade de Ci\^encias, Universidade do Porto, Rua do Campo Alegre 687, 4169-007 Porto, Portugal

\

\smallskip

   E-mail addresses:

\smallskip
    Ivan Kaygorodov (kaygorodov.ivan@gmail.com)

\smallskip  
    Mykola Khrypchenko (nskhripchenko@gmail.com)    

\smallskip    
    Samuel A.\ Lopes (slopes@fc.up.pt)

}

\

\ 

\

\noindent{\bf Abstract}: 
{\it 
We give a geometric classification of $n$-dimensional nilpotent, commutative nilpotent and anticommutative nilpotent algebras. We prove that the corresponding geometric varieties are irreducible, find their dimensions and describe explicit generic families of algebras which define each of these varieties. We show some applications of these results in the study of the length of anticommutative algebras.}

\

\noindent {\bf Keywords}: {\it 
Nilpotent algebra, commutative algebra, anticommutative algebra, irreducible components, geometric classification, degeneration, length function.}

\ 

\noindent {\bf MSC2010}: 	17A30, 17D99, 14L30.

\section*{Introduction}

The geometry of varieties of algebras defined by polynomial identities has been an active area of interest and research since the work of Nijenhuis--Richardson~\cite{NR66} and Gabriel~\cite{gabriel} in the 1960's and 1970's. The relationship between geometric features of the variety (such as irreducibility, dimension, smoothness) and the algebraic properties of its points brings novel geometric insight into the structure of the variety, its generic points and degenerations. 

Given algebras ${\mathcal A}$ and ${\mathcal B}$ in the same variety, we write ${\mathcal A}\to {\mathcal B}$ and say that ${\mathcal A}$ {\it degenerates} to ${\mathcal B}$, or that ${\mathcal A}$ is a {\it deformation} of ${\mathcal B}$, if ${\mathcal B}$ is in the Zariski closure of the orbit of ${\mathcal A}$ (under the base-change action of the general linear group). The study of degenerations of algebras is very rich and closely related to deformation theory, in the sense of Gerstenhaber \cite{ger63}. Degenerations have also been used to study a level of complexity of an algebra~\cite{gorb93,kv17}. There are many results concerning degenerations of algebras of small dimensions in a variety defined by a set of identities (see, for example, \cite{kv17, kv19, kpv19, kkp19geo, kv19, kkl19, ikv17, ikp20, GRH, GAB92, fkkv, BC99, ale3} and references therein). An interesting question is to study those properties which are preserved under degenerations. Recently, Chouhy~\cite{chouhy} proved that in the case of finite-dimensional associative algebras, the $N$-Koszul property is one such property.

Concerning Lie algebras, Grunewald--O'Halloran~\cite{GRH} calculated the degenerations for the variety of $5$-dimensional nilpotent Lie algebras while in~\cite{BC99}, Burde and Steinhoff constructed the graphs of degenerations for the varieties of $3$- and $4$-dimensional Lie algebras and in~\cite{fkkv} Fern\'andez Ouaridi, Kaygorodov, Khrypchenko and Volkov described the full graphs of degenerations of small dimensional nilpotent algebras.

One of the main problems of the {\it geometric classification} of a variety of algebras is a description of its irreducible components. In~\cite{gabriel}, Gabriel described the irreducible components of the variety of $4$-dimensional unital associative algebras and the variety of $5$-dimensional unital associative algebras was classified algebraically and geometrically by Mazzola~\cite{maz79}. Later, Cibils~\cite{cibils} considered rigid associative algebras with $2$-step nilpotent radical. Goze and Ancoch\'{e}a-Berm\'{u}dez proved that the varieties of $7$- and  $8$-dimensional nilpotent Lie algebras are reducible~\cite{GAB92}. All irreducible components of $2$-step nilpotent, commutative nilpotent and anticommutative nilpotent algebras have been described in \cite{shaf,ikp20}.

In many cases, the irreducible components of the variety are determined by the rigid algebras, i.e.\ algebras whose orbit closure is an irreducible component. It is worth mentioning that this is not always the case and Flanigan had shown that the variety of $3$-dimensional nilpotent associative algebras has an irreducible component which does not contain any rigid algebras---it is instead defined by the closure of a union of a one-parameter family of algebras \cite{fF68}. Here, we will encounter similar situations. 
Our main results are based on Theorems \ref{T:Rn:generic}, 
\ref{Sn:generic}, 
\ref{Tn:generic} 
and \cite{fkkv,kv19}. We are summarizing them below.

\begin{theoremA}
For any $n\ge 2$, the variety of all $n$-dimensional nilpotent algebras is irreducible and has dimension $\frac{n(n-1)(n+1)}{3}$.
\end{theoremA}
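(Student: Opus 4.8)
The plan is to obtain Theorem~A as the ``all algebras'' instance of Theorem~\ref{T:Rn:generic}. That theorem exhibits an explicit family $\mathcal{R}_n$ of $n$-dimensional nilpotent algebras, parametrised by an irreducible variety, and asserts that the variety $\Nl_n\subseteq\operatorname{Hom}(V\otimes V,V)\cong\Co^{n^3}$ of $n$-dimensional nilpotent algebra structures on a fixed space $V$ is the Zariski closure of $\bigcup_{A\in\mathcal{R}_n}\operatorname{Orb}(A)$, where $\operatorname{Orb}$ denotes the $\mathrm{GL}(V)$-orbit for the base-change action. Granting this, irreducibility of $\Nl_n$ is immediate, since $\Nl_n$ is then the closure of the image of the irreducible variety $\mathcal{R}_n\times\mathrm{GL}(V)$ under $(A,g)\mapsto g\ast A$. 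For the dimension, this morphism is dominant onto $\Nl_n$, so
\[
\dim\Nl_n=\dim\mathcal{R}_n+n^2-d,
\]
where $d$ is the dimension of a generic fibre; and because the construction makes distinct members of $\mathcal{R}_n$ pairwise non-isomorphic up to a finite ambiguity, such a fibre is a coset of $\aut{(A)}$ for a generic $A\in\mathcal{R}_n$, so $\dim\Nl_n$ equals the number of essential parameters of $\mathcal{R}_n$ plus $n^2-\dim\aut{(A)}$, which the parameter count then shows to equal $\frac{n(n-1)(n+1)}{3}$.

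The real content is Theorem~\ref{T:Rn:generic} itself: that every $n$-dimensional nilpotent algebra $\mathcal{B}$ lies in $\overline{\operatorname{Orb}(A)}$ for some $A\in\mathcal{R}_n$. I would prove this by an explicit construction organised by the \emph{shape} $(\dim\mathcal{B}^i/\mathcal{B}^{i+1})_i$ of the lower-power filtration $\mathcal{B}\supseteq\mathcal{B}^2\supseteq\cdots\supseteq\mathcal{B}^{n+1}=0$. In a basis of $V$ adapted to this filtration, the structure constants of $\mathcal{B}$ are supported only on the positions compatible with the filtration; accordingly $\mathcal{R}_n$ is built so that its generic member has \emph{free} structure constants in exactly those positions, together with a few fixed non-zero ``leading'' entries that force it to be genuinely $n$-dimensional and nilpotent. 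Given $\mathcal{B}$, one specialises the parameters of $\mathcal{R}_n$ along a curve $t\mapsto A(t)$ so that they agree with the structure constants of $\mathcal{B}$ in the relevant positions, and then chooses a one-parameter subgroup $g(t)\subseteq\mathrm{GL}(V)$ (diagonal with respect to the adapted basis, with weights increasing in the filtration degree) so that in the limit $t\to0$ the algebra $g(t)\ast A(t)$ keeps exactly the products present in $\mathcal{B}$ while every extraneous product is killed at a strictly positive rate. This is cleanest when $\mathcal{B}$ is graded of the generic shape; the effort lies in the remaining shapes, where several structure constants must be switched off simultaneously at carefully chosen rates. A helpful auxiliary observation is that every nilpotent algebra degenerates onto its associated graded algebra, which allows a number of cases to be reduced.

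The main obstacle is precisely this last step, carried out uniformly in $n$ and over \emph{all} admissible shapes: one must control, position by position, which structure constants can be sent to $0$ in the limit and at what rate, and verify that the limiting multiplication is exactly $\mathcal{B}$ and still lies (for the relevant parameters) in $\mathcal{R}_n$. This bookkeeping is also what pins down the number of parameters of $\mathcal{R}_n$, and hence the dimension asserted in Theorem~A. Two remarks indicate why the argument does not simplify. First, one cannot realise $\Nl_n$ as the image of the incidence variety of pairs (algebra, adapted \emph{complete} flag): a nilpotent algebra need not admit a complete flag $V=V_1\supseteq\cdots\supseteq V_{n+1}=0$ with $V_iV_j\subseteq V_{i+j}$, so such a description would capture only part of $\Nl_n$ and $\mathcal{R}_n$ must be constructed by hand. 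Second, as in Flanigan's example recalled in the Introduction, $\Nl_n$ need not contain a rigid algebra: no single orbit closure exhausts it, so the full family $\mathcal{R}_n$, together with the essential parameters it carries, is genuinely needed, both for covering $\Nl_n$ and for the dimension count.
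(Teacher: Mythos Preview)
Your reduction of Theorem~A to Theorem~\ref{T:Rn:generic} and the dimension count via orbit dimension plus number of parameters are correct and match the paper (Proposition~\ref{P:dim:nilp:Rn}). One incidental difference: the paper obtains irreducibility directly in Proposition~\ref{nilp}, \emph{before} $\R_n$ is ever built, by observing (Lemma~\ref{Nil^gm}) that every nilpotent algebra admits a basis with $c_{ij}^k=0$ for $k\le\max\{i,j\}$---equivalently a complete flag with $V_iV_j\subseteq V_{\max\{i,j\}+1}$, a weaker condition than your $V_iV_j\subseteq V_{i+j}$ and one that \emph{does} always hold---so that $\nil_n$ is the $\mathrm{GL}_n(\Co)$-sweep of an affine space $\nil_n^\gamma$. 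Your remark that the incidence-variety picture fails is therefore correct for the condition you wrote down, but the right flag condition makes it succeed trivially.

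The genuine gap is in your sketch of Theorem~\ref{T:Rn:generic}. You propose a case analysis by the shape of the lower-power filtration, degenerating via a \emph{diagonal} one-parameter subgroup $g(t)$. But diagonal base change only rescales each $c_{ij}^k$ by a monomial in $t$, and $\R_n$ carries both fixed nonzero entries ($c_{ii}^{i+1}=1$, $c_{21}^3=1$) and forced zeros ($c_{1i}^{i+1}=0$); a diagonal $g(t)$ can never reach a target $N$ with $\gamma_{1i}^{i+1}\ne 0$, and in general no weight vector satisfies all the equalities and strict inequalities you would need simultaneously. The paper's argument is not a case split by shape but a uniform induction on $n$ through $1$-dimensional central extensions: assuming a degeneration $\R_n\to N/\la e_{n+1}\ra$ via a \emph{lower-triangular} change of basis $(a_{ij}(t))$ and parametric constants $c_{ij}^k(t)$, one solves recursively (Steps~$0$ through~$n{-}1$ in the proof) for the new row $a_{n+1,i}(t)$ and the new layer $c_{ij}^{n+1}(t)$, and in the final step redefines the previously unused entry $a_{n1}(t)$ to absorb precisely the constraint coming from $c_{1n}^{n+1}=0$. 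The off-diagonal entries of the base change, together with the specific normalisations in Definition~\ref{R_n-defn}, are what make this recursion close; a diagonal $g(t)$ discards exactly the freedom the proof relies on.
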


Moreover, we show that the family $\R_n$ for  $n\ge 3$ given in Definition~\ref{R_n-defn} is generic in the variety of $n$-dimensional nilpotent algebras and inductively give an algorithmic procedure to obtain any $n$-dimensional nilpotent algebra as a degeneration from $\R_n$. 
The case of $n=2$ follows from \cite{kv19}.

\begin{theoremB}
For any $n\ge 2$, the variety of all $n$-dimensional commutative nilpotent algebras is irreducible and has dimension $\frac{n(n-1)(n+4)}{6}$.
\end{theoremB}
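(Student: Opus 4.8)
The plan is to deduce Theorem~B from Theorem~\ref{Sn:generic}, the commutative-nilpotent analogue of the genericity statement for $\R_n$ (Theorem~\ref{T:Rn:generic}): it produces, for each $n\ge 3$, an explicit irreducible family $\cS_n$ of $n$-dimensional commutative nilpotent algebras which is \emph{generic} in the variety $\mathcal{CN}_n$ of all $n$-dimensional commutative nilpotent algebras, meaning that every such algebra is a degeneration of some member of $\cS_n$, together with an inductive degeneration algorithm (as in the case of $\R_n$) realizing this. The base case $n=2$ is separate and follows from the classification of $2$-dimensional algebras (cf.\ \cite{kv19}), so throughout we assume $n\ge 3$.

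\emph{Irreducibility.} Let $T_n$ denote the (irreducible, affine) parameter space of $\cS_n$ and consider the morphism $\Phi\colon GL_n(\CC)\times T_n\to\mathcal{CN}_n$ sending $(g,t)$ to the algebra obtained from $\cS_n(t)$ by the base-change action of $g$. Its source is irreducible, hence $\overline{\im\Phi}$ is an irreducible closed subset of $\mathcal{CN}_n$. Genericity of $\cS_n$ is exactly the inclusion $\mathcal{CN}_n\subseteq\overline{\im\Phi}$, while $\overline{\im\Phi}\subseteq\mathcal{CN}_n$ because $\mathcal{CN}_n$ is Zariski closed, stable under base change, and contains every $\cS_n(t)$. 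Therefore $\mathcal{CN}_n=\overline{\im\Phi}$ is irreducible.

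\emph{Dimension.} Since $\Phi$ is dominant onto the irreducible variety $\mathcal{CN}_n$, one has $\dim\mathcal{CN}_n=n^2+\dim T_n-d$, with $d$ the dimension of a generic fibre of $\Phi$. Writing a generic $B\in\mathcal{CN}_n$ as the image of $\cS_n(t)$ under some $g$, the fibre $\Phi^{-1}(B)$ is in bijection with the set of pairs $(g',t')$ for which $\cS_n(t')\cong\cS_n(t)$ and $g'$ (together with $g$) implements such an isomorphism; hence $d=\dim\aut{\cS_n(t)}+\dim\{t'\in T_n:\cS_n(t')\cong\cS_n(t)\}$ for generic $t$. Equivalently, $\dim\mathcal{CN}_n$ equals $\dim\orb(\cS_n(t))+m_n$, the orbit dimension $n^2-\dim\aut{\cS_n(t)}$ of a generic member of $\cS_n$ plus the number $m_n$ of \emph{essential} parameters of the family. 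To finish, one must (i)~identify the group of base changes preserving the normal form defining $\cS_n$ and compute the dimension of the stabilizer of a generic point; (ii)~determine which generic parameters yield isomorphic algebras, thereby extracting $m_n$; and (iii)~check that $n^2-\dim\aut{\cS_n(t)}+m_n=\tfrac{n(n-1)(n+4)}{6}$.

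\emph{Main obstacle.} The substantial step is Theorem~\ref{Sn:generic} itself: proving $\cS_n$ generic means showing that an arbitrary $n$-dimensional commutative nilpotent algebra degenerates from $\cS_n$, which is precisely where the inductive construction is needed. Granted that, the remaining work is the stabilizer-and-parameter bookkeeping of (i)--(iii); this requires care but is routine once the explicit shape of $\cS_n$, and of a generic change of basis preserving it, are in hand. For instance, when $n=3$ a generic commutative nilpotent algebra is generated by one element and has a $2$-dimensional automorphism group while $m_3=0$, so $\dim\mathcal{CN}_3=9-2=7$, in agreement with $\tfrac{3\cdot 2\cdot 7}{6}$.
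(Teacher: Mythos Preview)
Your outline matches the paper's approach: both irreducibility and the dimension formula ultimately rest on Theorem~\ref{Sn:generic} and the explicit shape of $\cS_n$. A few corrections, however.

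First, $\cS_n$ is only defined for $n\ge 4$ (Definition~\ref{S_n-defn} uses $e_4$ through the normalization $c_{23}^4=1$ and the condition $c_{12}^4\ne 0$), so Theorem~\ref{Sn:generic} says nothing about $n=3$. The paper handles $n=2$ and $n=3$ separately via \cite{kv19,fkkv}. Your $n=3$ check is numerically correct, but it is not an instance of $\cS_3$; it is the algebra $\mathcal C_{02}$ from \cite{fkkv}.

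Second, the paper does \emph{not} derive irreducibility from genericity. It proves irreducibility directly and beforehand in Proposition~\ref{nilp}, by exhibiting the commutative nilpotent variety as $\mathrm{GL}_n(\Co)\cdot X$ with $X$ an affine space. Your route through genericity also works, but note that the paper's definition of ``generic'' already presupposes that the ambient variety is irreducible, so invoking Theorem~\ref{Sn:generic} as stated to conclude irreducibility is mildly circular; you would need to appeal to what its proof actually shows (that every point is in $\overline{\im\Phi}$).

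Third, the ``routine bookkeeping'' you defer is precisely the content of three results in the paper: Lemma~\ref{L:S:ann-aut} shows $\dim\aut\cA=1$ for every $\cA\in\cS_n$; Lemma~\ref{constructing-S_n+1} (together with Example~\ref{Ex:C:S4}) shows that distinct parameter values give non-isomorphic algebras, so the parameter count is honest; and the Proposition preceding Theorem~\ref{Sn:generic} then computes the number $q_n$ of free parameters and concludes $n^2-1+q_n=\tfrac{n(n-1)(n+4)}{6}$. None of this is deep, but it does require the explicit form of $\cS_n$ and is where the constraint $c_{12}^4\ne 0$ earns its keep (without it the automorphism group is larger).
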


As above, we show that the family $\cS_n$ for $n\ge 4$ given in Definition~\ref{S_n-defn} is generic in the variety of $n$-dimensional commutative nilpotent algebras and inductively give an algorithmic procedure to obtain any $n$-dimensional nilpotent commutative algebra as a degeneration from $\cS_n$. 
The cases of $n=2$ and $n=3$ follow from \cite{kv19,fkkv}.

\begin{theoremC}
For any $n\ge 2$, the variety of all $n$-dimensional anticommutative nilpotent algebras is irreducible and has dimension $\frac{(n-2)(n^2+2n+3)}{6}$.
\end{theoremC}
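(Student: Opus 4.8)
The plan is to deduce Theorem C, as was done for Theorems A and B, from the genericity of an explicit family. Realize the variety $\cN_n$ of $n$-dimensional anticommutative nilpotent algebras as a Zariski-closed $GL_n$-invariant subset of the affine space $\mathrm{Hom}(\Lambda^2 V,V)$ of skew-symmetric bilinear maps on an $n$-dimensional space $V$, which has dimension $\frac{n^2(n-1)}{2}$. The goal is to exhibit the family $\cT_n$ of Definition~\ref{T_n-defn}, parametrized by an irreducible variety $\Pi$, and to prove (the content of Theorem~\ref{Tn:generic}) that it is \emph{generic}, i.e.\ that every $n$-dimensional anticommutative nilpotent algebra lies in the orbit closure of some $\cT_n(\pi)$. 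Granting this, $\cN_n=\overline{\bigcup_{\pi\in\Pi}\orb(\cT_n(\pi))}$ is the closure of the image of the irreducible variety $GL_n\times\Pi$ under the morphism $(g,\pi)\mapsto g\ast\cT_n(\pi)$, hence irreducible. The low-dimensional cases below the range where $\cT_n$ is defined (in particular $n=2,3$) are taken from \cite{fkkv}; this is consistent with the claimed formula, which gives $\dim\cN_2=0$ (indeed the only $2$-dimensional anticommutative nilpotent algebra is abelian) and $\dim\cN_3=3=\dim GL_3-\dim\der(\mathfrak h_3)$, the Heisenberg Lie algebra being generic in dimension~$3$.

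For the genericity step I would argue by induction on $n$ together with an explicit degeneration construction. Given an arbitrary $n$-dimensional anticommutative nilpotent algebra $A$, filter it by the descending series $A=A^1\supseteq A^2\supseteq A^3\supseteq\cdots$ defining nilpotency, where $A^{i+1}=\sum_{j=1}^{i}A^jA^{i+1-j}$; choosing a basis adapted to this filtration one writes down parametrized changes of basis that collapse $A$ onto a normal form, and then checks that this normal form, hence $A$, arises as a limit (as $t\to0$) of $\cT_n(\pi)$ for a suitable $\pi$ depending polynomially on the structure constants of $A$. This is the "algorithmic procedure" alluded to in the introduction, presumably carried out by peeling off a one-dimensional central ideal, applying the inductive hypothesis for $\cN_{n-1}$, and then gluing. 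I expect this to be the main obstacle: arranging a single family that dominates \emph{all} of $\cN_n$—including the algebras needing more than two generators, whose orbits are strictly smaller—requires organizing the degenerations so that the limits cover every stratum, and verifying that the proposed $\cT_n$ does so is the technical heart of the argument.

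For the dimension I would use the surjective morphism $\Phi\colon GL_n\times\Pi\to\cN_n$, $(g,\pi)\mapsto g\ast\cT_n(\pi)$. By the theorem on dimensions of fibres, $\dim\cN_n=\dim GL_n+\dim\Pi-\dim\Phi^{-1}(\Phi(e,\pi))$ for generic $\pi$. One shows the family has $\dim\Pi$ essential parameters—equivalently, that for generic $\pi$ the set of $\pi'$ with $\cT_n(\pi')\cong\cT_n(\pi)$ is finite—by exhibiting enough isomorphism invariants, e.g.\ from the induced forms on the graded pieces $A^i/A^{i+1}$; then the generic fibre of $\Phi$ is $\aut(\cT_n(\pi))\times\{\pi\}$ and, since $\mathrm{char}=0$, has dimension $\dim\der(\cT_n(\pi))$, whence $\dim\cN_n=n^2+\dim\Pi-\dim\der(\cT_n(\pi))$. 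It then remains to carry out the explicit linear-algebra computation of the derivation algebra of a generic member of $\cT_n$ in the basis of Definition~\ref{T_n-defn}, together with the bookkeeping of $\dim\Pi$, and to verify the arithmetic identity $n^2+\dim\Pi-\dim\der(\cT_n(\pi))=\frac{(n-2)(n^2+2n+3)}{6}$. This last computation is routine but lengthy, and I would double-check it against the small cases (e.g.\ $\dim\der(\mathfrak h_3)=6$ in dimension three).
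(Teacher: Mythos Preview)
Your outline is essentially the paper's approach: exhibit the family $\cT_n$ of Definition~\ref{T_n-defn}, prove it generic by induction on $n$ via peeling off a one-dimensional central ideal (this is exactly Theorem~\ref{Tn:generic}), and compute the dimension as $n^2+\dim\Pi-\dim\aut(\cT_n(\pi))$. A few points of comparison and correction are worth making.

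First, the paper does \emph{not} deduce irreducibility from genericity of $\cT_n$. Irreducibility is proved much more cheaply in Proposition~\ref{nilp}: the anticommutative analogue of $\nil_n^\gamma$ (structure constants satisfying~\eqref{g_ij=0-for-k<=max(ij)} plus skew-symmetry) is an affine space, hence irreducible, and the whole variety is its image under the action map from $GL_n\times\nil_n^\gamma$. This means the irreducibility claim stands independently of the delicate construction of $\cT_n$, and you should take this shortcut.

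Second, the family $\cT_n$ is only defined for $n\ge 6$, so the cases $n=2,3,4,5$ (not just $2,3$) are handled by direct reference to \cite{kv19,fkkv,kkl19}. Third, your map $\Phi$ is dominant, not surjective; genericity only gives that the closure of its image is $\cN_n$, which is all you need for the fibre-dimension argument. Finally, rather than computing $\der(\cT_n(\pi))$ generically, the paper pins down $\aut(\cA)$ \emph{exactly} for every $\cA\in\cT_n$ with $n\ge 7$ (Proposition~\ref{P:T:aut}, building up through Lemmas~\ref{L:T:autT4}--\ref{L:T:autT7}): it is the $2$-dimensional group $\{I+a_{n1}E_{n1}+a_{n2}E_{n2}\}$, and simultaneously shows (Lemmas~\ref{constructing-T_n+1} and~\ref{L:anti:7:isoparameters}) that distinct parameters give non-isomorphic algebras, so the fibre is precisely $\aut(\cA)\times\{\pi\}$ with no finite ambiguity. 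The arithmetic is then $n^2-2+r_n=\frac{(n-2)(n^2+2n+3)}{6}$ with $r_n=\frac{(n-1)(n+1)(n-6)}{6}$.
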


We show also that the family $\cT_n$ for $n\ge 6$ given in Definition~\ref{T_n-defn} is generic in the variety of $n$-dimensional anticommutative nilpotent algebras and inductively give an algorithmic procedure to obtain any $n$-dimensional nilpotent anticommutative algebra as a degeneration from $\cT_n$. 
The cases of $n=2,3,4,5$ follow from \cite{kv19,fkkv}.

The notion of length for nonassociative algebras has been recently introduced in~\cite{guterman}, generalizing the corresponding notion for associative algebras. Using the above result, we show in Section~\ref{S:final} (cf.\ Corollary~\ref{C:final:length}) that the length of an arbitrary (i.e.\ not necessarily nilpotent) $n$-dimensional anticommutative algebra is bounded above by the $n^{\text{th}}$ Fibonacci number, and prove that our bound is sharp.

\section{Varieties of algebras, central extensions and nilpotent algebras}

Throughout this paper, we work over the field $\mathbb C$ of complex numbers and, unless otherwise noted, all vector spaces, linear maps and tensor products will be taken over $\mathbb C$. The identity matrix is denoted by $I$ and the matrix unit corresponding to the row $i$ and the column $j$ is $E_{ij}$. For a subset $X$ of a given vector space, the linear span of $X$ is denoted by $\la X\ra$. 

\subsection{Central extensions and the method of Skjelbred and Sund}\label{SS:ceSS}

An algebra is a vector space endowed with a bilinear multiplication. Formally, it is a pair $\cA=({\bf V}, \mu)$, where ${\bf V}$ is a vector space and $\mu\in{\rm Hom}({\bf V} \otimes {\bf V},{\bf V})$, which gives the algebra law. The annihilator of $\cA$ is $\ann\cA=\{ x\in\cA\mid x\cA+\cA x=0 \}$. For the purposes of this paper, we just need to consider $1$-dimensional central extensions. We will give here an overview of the algebraic classification method of Skjelbred and Sund~\cite{SS:method}. 

A bilinear map $\theta\colon {\cA} \times {\cA} \longrightarrow \Co$ determines the $1$-dimensional central extension ${\cA}_{\theta}={\cA}\oplus \Co$ with the product $(x+v)\cdot_{\theta}(y+w)=xy+\theta(x,y)$, for all $x, y\in\cA$ and $v, w\in\Co$. We let ${\rm Z}^{2}\left( {\cA},\Co\right)$ be the vector space of all bilinear maps ${\cA} \times {\cA} \longrightarrow \Co$, which we refer to as $2$-cocycles (of $\cA$ with values in $\Co$). Then, the space of $2$-coboundaries is ${\rm B^2}(\cA,\Co)=\{ \delta f\mid f\in \cA^*\}\subseteq {\rm Z}^2\left( {\cA},\Co\right)$, where $\delta f=f\circ\mu$, so that $\delta f(x,y)=f(xy)$, for all $x, y\in\cA$. The second cohomology of $\cA$ with values in $\Co$ is the quotient space ${\rm H}^2(\cA,\Co) = {\rm Z}^2\left( {\cA},\Co\right) \big/{\rm B^2}(\cA,\Co)$, which is well known to parametrize equivalence classes of $1$-dimensional central extensions of $\cA$.

Once a basis $(e_i)_{i=1}^n$ of $\cA$ is fixed, define the bilinear maps $\Delta_{ij}=e_i^*\times e_j^*$, for $1\leq i, j\leq n$, so that $\Delta_{ij}(e_k,e_\ell)=\delta_{ik}\delta_{j\ell}$, for all $1\leq i, j, k, \ell\leq n$. Then $\left(\Delta_{ij}\right)_{1\leq i, j\leq n}$ is a basis of ${\rm Z}^2(\cA,\Co)$. The automorphism group $\aut{\cA}$ acts on ${\rm Z}^2\left( {\cA},\Co\right)$ via $\phi(\theta)=\theta\circ(\phi\times\phi)$, for $\phi\in\aut{\cA}$ and $\theta\in{\rm Z}^2\left( {\cA},\Co\right)$ and the action induces a well-defined one on ${\rm H}^2(\cA,\Co)$. If $A$ is the matrix of $\theta$ and $M$ is the matrix of $\phi$, then $\phi(\theta)$ has matrix $M^T AM$.

It is easy to see that $\ann{\cA_{\theta}}=\left(\ann\cA\cap\ann{\theta}\right)\oplus\Co$, where $\ann{\theta}=\{x\in{\cA}\mid \theta(x,{\cA})+\theta ({\cA}, x)=0\}$. Thus, given algebras $\cA$, $\cA'$ and respective $2$-cocycles $\theta$, $\theta'$ such that $\ann\cA\cap\ann{\theta}=0=\ann\cA'\cap\ann{\theta'}$, then $\cA_{\theta}\simeq \cA'_{\theta'}$ implies that $\cA\simeq\cA_{\theta}/\ann{\cA_{\theta}}\simeq \cA'_{\theta'}/\ann{\cA'_{\theta'}}\simeq \cA'$. Therefore, it remains to determine the precise conditions on $\theta,\theta'\in{\rm Z}^2\left( {\cA},\Co\right)$ for $\cA_{\theta}$ and $\cA_{\theta'}$ to be isomorphic,
under the assumption that $\ann\cA\cap\ann{\theta}=0=\ann\cA\cap\ann{\theta'}$. This is given by the following result.

\begin{lemma}
Let $\cA$ be an algebra and $\theta,\theta'$ be $2$-cocycles, represented by the nonzero cohomology classes $[\theta]$, $[\theta']$ in ${\rm H}^2(\cA,\Co)$. Suppose that $\ann\cA\cap\ann{\theta}=0=\ann\cA\cap\ann{\theta'}$. Then $\cA_{\theta}$ is isomorphic to $\cA_{\theta'}$ if and only if the orbits of $[\theta]$ and $[\theta']$ under the action of $\aut{\cA}$ span the same vector space.
\end{lemma}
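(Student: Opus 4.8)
The plan is to exploit the fact, observed just before the statement, that $\aut{\cA}$ acts on $\mathrm{H}^2(\cA,\Co)$ in a way induced from its action on $\mathrm{Z}^2(\cA,\Co)$, together with the description of $\ann{\cA_\theta}$ as $(\ann\cA\cap\ann\theta)\oplus\Co$. The key point of the Skjelbred--Sund construction is that for a $2$-cocycle $\theta$ with $\ann\cA\cap\ann\theta=0$, the extension $\cA_\theta$ only depends on $[\theta]$, and in fact only on the \emph{line} $\la[\theta]\ra$ spanned by $[\theta]$: rescaling $\theta$ by a nonzero scalar $\lambda$ is realized by the automorphism of $\cA_\theta$ acting as the identity on $\cA$ and multiplying the central coordinate by $\lambda^{-1}$ (or, pulling back along an automorphism, by a diagonal change of basis). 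So isomorphism of the extensions is really a question about lines, or more precisely about the subspaces spanned by orbits of those lines.

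First I would fix bases $(e_i)$ of $\cA$ and complete them to bases of $\cA_\theta$, $\cA_{\theta'}$ by adjoining the central generator. Given an isomorphism $\Phi\colon\cA_\theta\to\cA_{\theta'}$, I would show that $\Phi$ must send $\ann{\cA_\theta}$ to $\ann{\cA_{\theta'}}$, hence $\Co\to\Co$, hence induces an isomorphism $\bar\Phi$ of the quotients $\cA_\theta/\ann{\cA_\theta}\cong\cA$ and $\cA_{\theta'}/\ann{\cA_{\theta'}}\cong\cA$ (here the hypothesis $\ann\cA\cap\ann\theta=0$ is exactly what identifies the quotient with $\cA$). Lifting $\bar\Phi$ to an automorphism $\phi\in\aut\cA$ and comparing products, one gets that $\theta'$ and $\phi(\theta)=\theta\circ(\phi\times\phi)$ agree up to a coboundary and a nonzero scalar, i.e.\ $[\theta']=\lambda\,\phi([\theta])$ for some $\lambda\in\Co^\times$. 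In other words, the $\aut\cA$-orbit of the line $\la[\theta']\ra$ equals that of $\la[\theta]\ra$. Conversely, given such a relation, one reverses the construction: the pair $(\phi,\lambda)$ assembles into an explicit linear isomorphism $\cA_\theta\to\cA_{\theta'}$ (acting as $\phi$ on $\cA$ and by the appropriate scalar on the center, plus a correction term coming from the coboundary, i.e.\ from some $f\in\cA^*$), which one checks is multiplicative. This gives the ``orbits of the lines coincide'' criterion.

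It then remains to translate ``the $\aut\cA$-orbits of the lines $\la[\theta]\ra$ and $\la[\theta']\ra$ coincide'' into the statement as phrased, namely that the orbits of $[\theta]$ and $[\theta']$ \emph{span the same vector space}. Since $\aut\cA$ acts linearly, the orbit of $[\theta]$ spans the same subspace as the orbit of $\lambda[\theta]$ for any $\lambda\ne 0$, and $\phi$ maps the span of the orbit of $[\theta]$ onto the span of the orbit of $\phi([\theta])$; but that span is $\aut\cA$-invariant, so it equals the span of the orbit of $[\theta]$ itself. Hence ``orbits of the lines coincide'' forces ``orbit spans coincide''. For the reverse implication one uses that $[\theta]\ne 0$ and $[\theta']\ne 0$ together with the appropriate minimality: if the spans of the two orbits agree, then in particular $[\theta']$ lies in the span of the $\aut\cA$-orbit of $[\theta]$ and vice versa, and a short argument (already implicit in the Skjelbred--Sund papers and reproduced in the literature we cite) upgrades this to $[\theta']\in\aut\cA\cdot\la[\theta]\ra$.

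The main obstacle, and the step deserving the most care, is precisely this last upgrade: passing from the \emph{linear-algebraic} equality of orbit spans to the \emph{orbit-theoretic} statement that one cocycle class lies in the orbit of a scalar multiple of the other. One has to use that $\ann\cA\cap\ann\theta=0$, which guarantees $[\theta]\ne 0$ and, more importantly, controls the ``size'' of the relevant subspaces so that no degenerate coincidence of spans can occur without an actual orbit coincidence; the nondegeneracy hypothesis on the annihilators is what makes the correspondence between extensions and orbits of lines faithful. Everything else is bookkeeping: tracking how $\Phi$ decomposes with respect to the direct-sum decompositions $\cA_\theta=\cA\oplus\Co$, and verifying multiplicativity of the reconstructed map, which reduces to the cocycle identity and the definition of $\delta f$.
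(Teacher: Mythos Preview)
The paper does not prove this lemma; it is stated as a known result from the Skjelbred--Sund method. So there is no ``paper's own proof'' to compare against. Your steps 1--2 (decomposing an isomorphism $\Phi$ as $(\phi,\lambda,f)$ with $\phi\in\aut\cA$, $\lambda\in\Co^\times$, $f\in\cA^*$, and conversely reconstructing $\Phi$ from such data) are the standard argument and are correct; they yield the usual criterion
\[
\cA_\theta\cong\cA_{\theta'}\iff [\theta']\in\Co^\times\cdot\aut\cA\cdot[\theta],
\]
i.e.\ the \emph{lines} $\la[\theta]\ra$ and $\la[\theta']\ra$ lie in the same $\aut\cA$-orbit in $\mathbb{P}\mathrm{H}^2(\cA,\Co)$.

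Your step 3, however, is a genuine gap. You correctly observe that ``line orbits coincide'' implies ``orbit spans coincide'', but the converse you gesture at (``a short argument\ldots upgrades this'') is \emph{false} for general linear group actions, and you give no argument for it. Concretely: let a group $G\cong\Z/2$ act on $\Co^2$ by swapping coordinates. Then $(1,0)$ and $(1,2)$ both have orbit span equal to $\Co^2$, yet $(1,2)$ is not a scalar multiple of any element of the orbit $\{(1,0),(0,1)\}$. Nothing in your outline rules out such behaviour for $\aut\cA$ acting on $\mathrm{H}^2(\cA,\Co)$, and the annihilator hypothesis $\ann\cA\cap\ann\theta=0$ does not obviously help here (it only ensures $[\theta]\ne 0$ and that the quotient by the center recovers $\cA$). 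So as written, your proof establishes the ``line orbits'' criterion but does not reach the lemma as literally phrased. Either argue that the phrasing ``span the same vector space'' is intended to mean precisely the line-orbit condition (which is how the lemma is actually used later in the paper, e.g.\ in Lemma~\ref{constructing-R_n+1}), or supply the missing argument---but be aware that a purely group-theoretic upgrade is not available.
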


\subsection{Varieties of algebras}

Given an $n$-dimensional vector space ${\bf V}$, an algebra structure on ${\bf V}$ (or an $n$-dimensional algebra law) is naturally seen as an element of ${\rm Hom}({\bf V} \otimes {\bf V},{\bf V}) \cong {\bf V}^* \otimes {\bf V}^* \otimes {\bf V}$, a vector space of dimension $n^3$. Once we fix a basis $e_1,\dots,e_n$ of ${\bf V}$, this space can be identified with $\mathbb{C}^{n^3}$ and given the structure of an affine variety whose coordinate ring if the polynomial ring in the variables $\left(c_{i,j}^k\right)_{i,j,k=1}^n$. Accordingly, a subset of ${\rm Hom}({\bf V} \otimes {\bf V},{\bf V})$ is {\it Zariski-closed} if it can be defined by a set of polynomial equations in the variables $\left(c_{i,j}^k\right)_{i,j,k}$. For simplicity, we identify points in the variety with the corresponding maximal ideals of the coordinate ring so, if no confusion arises, we also think of the $c_{i,j}^k$ as scalars.

Henceforth, having fixed the vector space ${\bf V}$ and its basis $(e_i)_{i=1}^n$, we will identify points $\left(c_{i,j}^k\right)_{i,j,k=1}^n$ in $\mathbb{C}^{n^3}$ with $n$-dimensional algebras via their structure constants relative to the basis $(e_i)_{i=1}^n$. Concretely, any $\mu\in {\rm Hom}({\bf V} \otimes {\bf V},{\bf V})$ is determined by the $n^3$ structure constants $c_{i,j}^k\in\mathbb{C}$ such that
$\mu(e_i\otimes e_j)=\sum_{k=1}^n c_{i,j}^k e_k$.

\begin{example}
The following polynomial identities define well-known varieties of $n$-dimensional algebras: 
\begin{enumerate}
\item Anticommutative algebras
\begin{equation*}
c_{ij}^k+c_{ji}^k=0, \quad 1\leq i, j, k\leq n.
\end{equation*}
\item Associative algebras
\begin{equation*}
\sum_{\ell=1}^n c_{ij}^\ell c_{\ell k}^m- c_{i\ell}^m c_{j k}^\ell=0, \quad 1\leq i, j, k, m\leq n.
\end{equation*}

\item Lie algebras
\begin{align*}
\sum_{\ell=1}^n c_{ij}^\ell c_{k\ell}^m+ c_{jk}^\ell c_{i\ell}^m+c_{ki}^\ell c_{j \ell}^m=0 \quad\mbox{and}\quad c_{ij}^k+c_{ji}^k=0, \quad 1\leq i, j, k, m\leq n.
\end{align*}
\end{enumerate}
\end{example}

\subsection{Nilpotent algebras}\label{SS:nilpalgs}

Recall that an algebra $\mathcal N$ is {\it nilpotent} if ${\mathcal N}^{k}=0$, for some $k\geq 1$, where
\begin{equation*}
{\mathcal N}^{1}={\mathcal N}, \qquad  {\mathcal N}^{k+1}=\sum_{i=1}^{k}{\mathcal N}^i {\mathcal N}^{k+1-i}, \qquad k\geq 1.
\end{equation*}
The smallest (if any) positive integer $k$ satisfying ${\mathcal N}^{k}=0$ is called the {\it nilpotency index} of ${\mathcal N}$. It is easy to see that the $n$-dimensional algebras $\mathcal N$ with ${\mathcal N}^{k}=0$ form a closed set (as elements of $\mathbb{C}^{n^3}$) and thence so do all nilpotent $n$-dimensional algebras.

\begin{lemma}\label{Nil^gm}
Let $\mathcal{N}$ be an algebra of dimension $n$. Then $\mathcal{N}$ is nilpotent if and only if it is isomorphic to an algebra $\mathcal{M}$ whose structure constants $(\gamma_{i,j}^{k})_{i,j,k=1}^n$ satisfy 
\begin{align}\label{g_ij=0-for-k<=max(ij)}
\gamma_{i,j}^{k}=0,\ \forall k \leq \max\{i,j\}.    
\end{align}
\end{lemma}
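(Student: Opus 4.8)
The statement is a characterization of nilpotency in terms of a normal form for the structure constants. The plan is to prove both implications. The "if" direction is essentially immediate: if $\mathcal{M}$ has structure constants satisfying $\gamma_{i,j}^k = 0$ for all $k \leq \max\{i,j\}$, then any product of basis elements $e_i$ lands in the span of basis vectors with strictly larger index than both factors, so iterated products vanish once the index would exceed $n$; concretely one shows by induction that $\mathcal{M}^k \subseteq \langle e_k, e_{k+1}, \dots, e_n\rangle$, hence $\mathcal{M}^{n+1} = 0$. Since nilpotency is preserved under isomorphism, $\mathcal{N} \cong \mathcal{M}$ is then nilpotent.

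**The main direction.** For "only if", suppose $\mathcal{N}$ is nilpotent with nilpotency index $m$, so $\mathcal{N}^m = 0$ but $\mathcal{N}^{m-1} \neq 0$. The key observation is that the descending chain $\mathcal{N} = \mathcal{N}^1 \supseteq \mathcal{N}^2 \supseteq \cdots \supseteq \mathcal{N}^m = 0$ consists of (two-sided) ideals of $\mathcal{N}$, and it satisfies $\mathcal{N}^i \mathcal{N}^j \subseteq \mathcal{N}^{i+j}$. I would choose a basis of $\mathcal{N}$ adapted to this filtration: pick vectors $e_1, \dots, e_n$ such that for each $k$, the tail $\langle e_{k}, \dots, e_n \rangle$ equals $\mathcal{N}^{j}$ for the appropriate $j$ — more precisely, refine the filtration to a complete flag of subspaces $\langle e_n \rangle \subseteq \langle e_{n-1}, e_n\rangle \subseteq \cdots \subseteq \langle e_1, \dots, e_n\rangle = \mathcal{N}$ in which each $\mathcal{N}^j$ appears. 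Relabel so that $e_i \in \mathcal{N}^{d(i)} \setminus \mathcal{N}^{d(i)+1}$ where $d$ is (weakly) decreasing in... — rather, order the basis so that larger index corresponds to deeper in the filtration. With such a basis, $e_i e_j \in \mathcal{N}^{d(i)+d(j)} \subseteq \mathcal{N}^{d(i)+1} \cap \mathcal{N}^{d(j)+1}$, and since each $\mathcal{N}^{d+1}$ is spanned by basis vectors of index strictly greater than every index appearing in level $d$, we conclude $\gamma_{i,j}^k = 0$ whenever $k \leq i$ or $k \leq j$, i.e.\ for all $k \leq \max\{i,j\}$. Taking $\mathcal{M}$ to be $\mathcal{N}$ with this basis (an isomorphic copy) finishes the proof.

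**The main obstacle.** The delicate point is the bookkeeping in choosing and ordering the adapted basis so that the inequality comes out as $k \le \max\{i,j\}$ rather than merely $k \le i+j$ or some weaker bound. One must be careful that the basis vectors spanning $\mathcal{N}^{j}$ modulo $\mathcal{N}^{j+1}$ are assigned indices that are all \emph{smaller} than those spanning $\mathcal{N}^{j+1}$, so that the inclusion $e_ie_j \in \mathcal{N}^{d(i)+d(j)}$ translates into the vanishing of $\gamma_{i,j}^k$ for $k$ up to $\max\{i,j\}$ — here one uses that $d(i)+d(j) \geq d(i)+1$ and $d(i)+d(j)\geq d(j)+1$ whenever $d(i),d(j)\geq 1$, which holds as all basis vectors lie in $\mathcal{N} = \mathcal{N}^1$. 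I expect this indexing argument, together with verifying that the $\mathcal{N}^j$ really do form an ideal chain with $\mathcal{N}^i\mathcal{N}^j \subseteq \mathcal{N}^{i+j}$ (a routine induction from the definition of $\mathcal{N}^{k+1}$), to be the only non-formal content; everything else is immediate.
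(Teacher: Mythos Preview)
Your ``only if'' direction is correct and takes a genuinely different route from the paper: you pick a basis adapted to the filtration $\mathcal{N}\supseteq\mathcal{N}^2\supseteq\cdots$ and use $\mathcal{N}^i\mathcal{N}^j\subseteq\mathcal{N}^{i+j}$, whereas the paper argues that condition~\eqref{g_ij=0-for-k<=max(ij)} is preserved under one-dimensional central extensions and that every finite-dimensional nilpotent algebra is obtained from the zero algebra by iterated central extensions. Both work; yours is more self-contained and avoids the Skjelbred--Sund machinery entirely.

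Your ``if'' direction, however, contains a real error: the inclusion $\mathcal{M}^k\subseteq\langle e_k,\dots,e_n\rangle$ is false in general. Take $n=3$ with $e_1e_1=e_2$, $e_2e_2=e_3$ and all other products of basis vectors zero; these structure constants satisfy~\eqref{g_ij=0-for-k<=max(ij)}, yet $e_2e_2=e_3\in\mathcal{M}^2\cdot\mathcal{M}^2\subseteq\mathcal{M}^4$, so $\mathcal{M}^4\ne 0$ and the nilpotency index is $5>n+1$. The induction step fails because from $\mathcal{M}^i\subseteq V_i:=\langle e_i,\dots,e_n\rangle$ and $V_pV_q\subseteq V_{\max\{p,q\}+1}$ you only get $\mathcal{M}^i\mathcal{M}^{m+1-i}\subseteq V_{\max\{i,\,m+1-i\}+1}$, and $\max\{i,m+1-i\}$ can be as small as $\lceil(m+1)/2\rceil$, not $m$. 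The fix is easy: a product of $\ell$ elements (any bracketing) lies in $V_{\lceil\log_2\ell\rceil+1}$, so all products of length $2^{n-1}+1$ vanish (the paper states the looser bound $2^n$). With this corrected bound the argument goes through; indeed, the paper later remarks that the nilpotency index of an $n$-dimensional nilpotent algebra can attain $2^{n-1}+1$, so your linear bound could never have been right.
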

\begin{proof}
Indeed, \eqref{g_ij=0-for-k<=max(ij)} holds for the algebra with zero multiplication. Moreover, if an algebra $\mathcal{A}$ is a central extension of another algebra $\mathcal{B}$ by some vector space and the structure constants of $\mathcal{B}$ relative to some basis satisfy \eqref{g_ij=0-for-k<=max(ij)}, then completing this basis of $\mathcal{B}$ to a basis of $\mathcal{A}$ we see that the corresponding structure constants of $\mathcal{A}$ will also satisfy \eqref{g_ij=0-for-k<=max(ij)}. It remains to note that each finite-dimensional nilpotent algebra can be obtained, up to isomorphism, from an algebra with zero multiplication by consecutively applying the central extension procedure.

Conversely, if the structure constants of an algebra $\mathcal{M}$ satisfy \eqref{g_ij=0-for-k<=max(ij)}, then all the products in $\mathcal{M}$ of length $2^n$ are zero, so $\mathcal{M}$ is nilpotent.
\end{proof}

\begin{definition}
Let $n\geq 1$. We denote by $\mathfrak{Nil}_n$ the variety of all nilpotent algebra structures (on $\bf V$, with respect to the basis $(e_i)_{i=1}^n$) and by $\mathfrak{Nil}^{\gamma}_n$ its subvariety determined by the system of equations \eqref{g_ij=0-for-k<=max(ij)}.
\end{definition}

Recall that the general linear group $\mathrm{GL}_n(\mathbb{C})$ acts on the space of algebra structures on $\bf V$ via base-change and the orbits parametrize the isomorphism classes of algebras. Given an algebra $\mathcal{A}$, its orbit under this action will be denoted by $O(\mathcal{A})$. Lemma~\ref{Nil^gm} can thus be restated simply as $\mathfrak{Nil}_n=\mathrm{GL}_n(\mathbb{C})\cdot \mathfrak{Nil}^{\gamma}_n$.

\begin{proposition}\label{nilp}
Let $n\geq 1$. The variety $\mathfrak{Nil}_n$ of all nilpotent $n$-dimensional algebras and its subvarieties of commutative and anticommutative nilpotent algebras are irreducible. 
\end{proposition}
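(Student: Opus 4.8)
The plan is to exploit the description $\mathfrak{Nil}_n=\mathrm{GL}_n(\mathbb{C})\cdot\mathfrak{Nil}^\gamma_n$ from Lemma~\ref{Nil^gm} together with the elementary fact that the image of an irreducible variety under a morphism is irreducible, and that the closure of an irreducible set is irreducible. First I would observe that $\mathfrak{Nil}^\gamma_n$, being the affine subvariety of $\mathbb{C}^{n^3}$ cut out by the \emph{linear} equations \eqref{g_ij=0-for-k<=max(ij)} (namely $\gamma_{i,j}^k=0$ whenever $k\le\max\{i,j\}$), is a linear subspace, hence irreducible. Then I would consider the action morphism
\begin{equation*}
\psi\colon \mathrm{GL}_n(\mathbb{C})\times\mathfrak{Nil}^\gamma_n\longrightarrow \mathbb{C}^{n^3},\qquad \psi(g,\mu)=g\cdot\mu,
\end{equation*}
which is a morphism of varieties since the base-change action is polynomial in the entries of $g$, $g^{-1}$ and the structure constants. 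The source $\mathrm{GL}_n(\mathbb{C})\times\mathfrak{Nil}^\gamma_n$ is irreducible, being a product of the irreducible variety $\mathrm{GL}_n(\mathbb{C})$ (a principal open subset of affine space) with the irreducible $\mathfrak{Nil}^\gamma_n$. Therefore $\im\psi=\mathfrak{Nil}_n$ is irreducible, and in particular so is its closure; but $\mathfrak{Nil}_n$ is already closed in $\mathbb{C}^{n^3}$ by the remark preceding Lemma~\ref{Nil^gm}, so this is immediate.

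For the commutative and anticommutative cases I would run the same argument \emph{relativized} to the appropriate linear subspace. Let $\mathfrak{Nil}^\gamma_n{}^{\mathrm{com}}$ (resp.\ $\mathfrak{Nil}^\gamma_n{}^{\mathrm{anti}}$) denote the set of $\mu\in\mathfrak{Nil}^\gamma_n$ that are additionally commutative (resp.\ anticommutative); since commutativity and anticommutativity are each defined by linear equations on the structure constants (as recorded in the Example, $c_{ij}^k\mp c_{ji}^k=0$), these are again linear subspaces of $\mathbb{C}^{n^3}$, hence irreducible. The key point is that the property of being commutative, resp.\ anticommutative, is invariant under the $\mathrm{GL}_n(\mathbb{C})$-action, and conversely the reduction-to-normal-form procedure in the proof of Lemma~\ref{Nil^gm} (central extensions of an algebra with the stated property again have that property, after completing the basis) preserves commutativity and anticommutativity. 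Hence restricting $\psi$ to $\mathrm{GL}_n(\mathbb{C})\times\mathfrak{Nil}^\gamma_n{}^{\mathrm{com}}$ has image exactly the variety of commutative nilpotent $n$-dimensional algebras, and likewise in the anticommutative case; each such image is irreducible by the same product-plus-continuous-image reasoning.

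The only genuinely delicate point is the claim that the image of $\psi$ is precisely $\mathfrak{Nil}_n$ (and the two refined versions), which is exactly the content of Lemma~\ref{Nil^gm} as restated, so there is nothing new to prove there; everything else is a routine application of the stability of irreducibility under products and under morphisms. I would therefore expect the write-up to be short, with the main (minor) obstacle being to state cleanly that commutativity/anticommutativity are $\mathrm{GL}_n(\mathbb{C})$-invariant \emph{and} compatible with the Skjelbred--Sund normal form, so that the relativized versions of Lemma~\ref{Nil^gm} hold verbatim.
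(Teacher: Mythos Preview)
Your proposal is correct and follows essentially the same route as the paper: show that $\mathfrak{Nil}^\gamma_n$ is irreducible (the paper phrases this via its coordinate ring being a polynomial ring, you via its being a linear subspace---equivalent observations), then use that $\mathrm{GL}_n(\mathbb{C})\times\mathfrak{Nil}^\gamma_n$ is irreducible and that $\mathfrak{Nil}_n$ is its continuous image under the action map. Your treatment of the commutative and anticommutative cases is in fact more explicit than the paper's, which simply notes that ``a similar argument holds''; your remark that (anti)commutativity is $\mathrm{GL}_n(\mathbb{C})$-invariant and preserved under the Skjelbred--Sund reduction is exactly what is needed to justify that line.
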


\begin{proof}
The coordinate ring $\mathcal{O}_n$ of the variety $\mathfrak{Nil}^{ \gamma}_n$ is the complex polynomial ring in the variables $\left\{c_{i,j}^k\mid 1\leq i,j\leq n, \ k>\max\{i,j\}\right\}$. Thus, $\mathcal{O}_n$ being a domain, the variety $\mathfrak{Nil}^{ \gamma}_n$ is irreducible. If $\ell_n$ is the transcendence degree of $\mathcal{O}_n$, then $\ell_{n+1}=\ell_n+n^2$, so the dimension of $\mathfrak{Nil}^{\gamma}_n$ is $\ell_n=\frac{n(n-1)(2n-1)}{6}$.

By Lemma~\ref{Nil^gm}, $\mathfrak{Nil}_n=\mathrm{GL}_n(\mathbb{C})\cdot \mathfrak{Nil}^{\gamma}_n$. Since $\mathrm{GL}_n(\mathbb{C})$ is a connected (i.e.\ irreducible) algebraic group and the product of irreducible varieties is irreducible, it follows that $\mathrm{GL}_n(\mathbb{C})\times \mathfrak{Nil}^{\gamma}_n$ is irreducible. Hence, so is the continuous image $\mathfrak{Nil}_n=\mathrm{GL}_n(\mathbb{C})\cdot \mathfrak{Nil}^{\gamma}_n$.

A similar argument holds for the varieties of $n$-dimensional commutative and anticommutative nilpotent algebras.
\end{proof}

\begin{definition}
    Let $\V$ be an irreducible variety of algebras and $\R\subseteq\V$ be a family of algebras. The family $\R$ is said to be {\it generic} in $\V$, if $\overline{\bigcup_{\mathcal{A}\in\R} O(\mathcal{A})}=\V$. For an algebra $\cB\in\V$, we also write $\R\to\cB$ as shorthand for $\cB\in \overline{\bigcup_{\mathcal{A}\in\R} O(\mathcal{A})}$.
\end{definition}

\section{The geometric classification of nilpotent algebras}\label{SS:nilpotent algebras}

In this section we find a generic family of algebras in the variety $\mathfrak{Nil}_n$ of all nilpotent $n$-dimensional algebras and use it to compute the dimension of the variety. Recall that $\mathfrak{Nil}^{\gamma}_n$ is the subvariety of algebras satisfying~\eqref{g_ij=0-for-k<=max(ij)} and that $(e_i)_{i=1}^{n}$ is our fixed basis.

\begin{lemma}\label{I+xE_1_n-auto}
    Let $\cA\in\nil^{\gamma}_{n}$. Then, for all $x\in\Co$, the linear endomorphism $\vf$ defined by $\vf(e_1)=e_1+xe_n$, $\vf(e_i)=e_i$, $2\le i\le n$, is an automorphism of $\cA$.
\end{lemma}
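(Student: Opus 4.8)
The plan is to extract from the defining condition $\gamma_{i,j}^{k}=0$ for all $k\le\max\{i,j\}$ two structural facts about $\cA\in\nil^{\gamma}_{n}$ and then read off the statement. First, since $\max\{n,j\}=n$ for every $j$, there is no admissible index $k>\max\{n,j\}$, so $e_ne_j=e_je_n=0$ for all $j$; that is, $e_n\in\ann\cA$. Second, since $1\le\max\{i,j\}$ always holds, $\gamma_{i,j}^{1}=0$ for all $i,j$, whence $\cA^2\subseteq\la e_2,\dots,e_n\ra$. The point of these two observations is that $\vf$ restricts to the identity on $\la e_2,\dots,e_n\ra$ — in particular it fixes $\cA^2$ pointwise — while the unique basis vector it moves, $e_1\mapsto e_1+xe_n$, is moved only by adding an element of $\ann\cA$.

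Next I would verify that $\vf$ is multiplicative on the basis. Writing $\vf(e_i)=e_i+\dt_{i1}xe_n$ (with $\dt$ the Kronecker delta), we get
\[
\vf(e_i)\vf(e_j)=(e_i+\dt_{i1}xe_n)(e_j+\dt_{j1}xe_n)=e_ie_j+\dt_{j1}x\,e_ie_n+\dt_{i1}x\,e_ne_j+\dt_{i1}\dt_{j1}x^2\,e_ne_n=e_ie_j,
\]
since every term containing a factor $e_n$ vanishes because $e_n\in\ann\cA$. On the other hand $e_ie_j\in\cA^2\subseteq\la e_2,\dots,e_n\ra$, and $\vf$ is the identity on that subspace, so $\vf(e_ie_j)=e_ie_j$ as well. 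Hence $\vf(e_ie_j)=\vf(e_i)\vf(e_j)$ for all $i,j$, and by bilinearity $\vf(ab)=\vf(a)\vf(b)$ for all $a,b\in\cA$.

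Finally, $\vf$ is bijective: the linear endomorphism $\psi$ with $\psi(e_1)=e_1-xe_n$ and $\psi(e_i)=e_i$ for $2\le i\le n$ is easily checked to satisfy $\vf\psi=\psi\vf=\mathrm{id}$ (equivalently, the matrix of $\vf$ relative to $(e_i)_{i=1}^n$ is unitriangular, hence invertible). Being a bijective algebra homomorphism of $\cA$ onto itself, $\vf$ is an automorphism. I do not expect a real obstacle here: once the two structural facts $e_n\in\ann\cA$ and $\cA^2\subseteq\la e_2,\dots,e_n\ra$ are isolated from the defining inequalities, the homomorphism property collapses to the one-line computation above, and invertibility is immediate.
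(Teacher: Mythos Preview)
Your proof is correct and follows essentially the same approach as the paper's: both arguments use that $e_n\in\ann\cA$ to kill the cross terms in $\vf(e_i)\vf(e_j)$ and that products $e_ie_j$ lie in $\la e_2,\dots,e_n\ra$ so that $\vf$ fixes them, then exhibit the explicit inverse. Your version is slightly more compact (handling all basis pairs at once via the Kronecker delta and isolating the two structural facts up front), but there is no substantive difference.
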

\begin{proof}
    Obviously, $\vf(e_i)\vf(e_j)=e_ie_j=\vf(e_ie_j)$ for all $2\le i,j\le n$, in view of~\eqref{g_ij=0-for-k<=max(ij)}. Observe that $e_n\in \ann\cA$, so $\vf(e_1)\vf(e_i)=(e_1+xe_{n})e_i=e_1e_i=\vf(e_1e_i)$ and similarly $\vf(e_i)\vf(e_1)=\vf(e_ie_1)$, for all $2\le i\le n$, by~\eqref{g_ij=0-for-k<=max(ij)}. Finally, $\vf(e_1)^2=(e_1+xe_{n})^2=e_1^2=\vf(e_1^2)$, again by \eqref{g_ij=0-for-k<=max(ij)}. Thus, $\vf$ is an endomorphism of $\cA$. It is clearly invertible, with $\vf^{-1}(e_1)=e_1-xe_{n}$ and $\vf^{-1}(e_i)=e_i$, for all $2\le i\le n$.
\end{proof}

\begin{lemma}\label{Aut(A)-exactly-I+xE_n1}
    Let $\cA\in\nil^{\gamma}_{n+1}$ such that $\ann\cA=\la e_{n+1}\ra$, $e_1e_n=0$ and $e_i^2=e_{i+1}$, for all $1\le i\le n$. Suppose that $\aut(\cA/\la e_{n+1}\ra)=\{I+xE_{n1}\mid x\in\Co\}$ in the basis $(e_i+\la e_{n+1}\ra)_{i=1}^n$. Then $\aut\cA=\{I+xE_{n+1,1}\mid x\in\Co\}$ in the basis $(e_i)_{i=1}^{n+1}$.
\end{lemma}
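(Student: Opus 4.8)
The plan is to lift an arbitrary automorphism $\psi$ of $\cA$ to an automorphism of $\cA/\la e_{n+1}\ra$, invoke the hypothesis to pin down its action modulo $e_{n+1}$, and then bootstrap to get the exact form on $\cA$ itself. First I would note that $\la e_{n+1}\ra = \ann\cA$ is a characteristic subspace, so any $\psi\in\aut\cA$ descends to an automorphism $\bar\psi$ of $\cA/\la e_{n+1}\ra$. By hypothesis, $\bar\psi = I + xE_{n1}$ in the induced basis, for some $x\in\Co$. Lifting this back, $\psi(e_n) = e_n + x e_1 \pmod{e_{n+1}}$ and $\psi(e_i) = e_i \pmod{e_{n+1}}$ for $1\le i\le n-1$, while $\psi(e_{n+1})$ is a scalar multiple $\lambda e_{n+1}$ (as $e_{n+1}$ spans the annihilator). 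So in the basis $(e_i)_{i=1}^{n+1}$, the matrix of $\psi$ has the form $I + xE_{n1} + \sum_{i} \mu_i E_{n+1,i}$ with the $(n+1,n+1)$ entry equal to $\lambda$; I must show $x=0$, every $\mu_i = 0$ except possibly $\mu_1$, and $\lambda = 1$.

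The key is to exploit the relations $e_i^2 = e_{i+1}$ for $1\le i\le n$ and $e_1 e_n = 0$. Applying $\psi$ to $e_1^2 = e_2$ and using $\psi(e_1) = e_1 + \mu_1^{(1)} e_{n+1}$ (where I temporarily write $\psi(e_1)=e_1+a e_{n+1}$, etc.), I get $\psi(e_2) = \psi(e_1)^2 = e_1^2 = e_2$ since $e_{n+1}\in\ann\cA$; similarly $\psi(e_i) = e_i$ for $2\le i\le n$ follows by induction from $\psi(e_{i+1}) = \psi(e_i)^2 = e_i^2 = e_{i+1}$, as long as I already know $\psi(e_i) = e_i + (\text{something in }\ann\cA)$. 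This forces the displacement $x e_1$ in $\psi(e_n)$ to vanish: comparing the $e_1$-coefficient of $\psi(e_n)$ (which should be $0$ from the chain above, since $\psi(e_n)=e_n$) with $e_n + x e_1 \pmod{e_{n+1}}$ gives $x=0$. Next, writing $\psi(e_1) = e_1 + a e_{n+1}$, I apply $\psi$ to $e_1 e_n = 0$ to get $\psi(e_1)\psi(e_n) = (e_1 + a e_{n+1})e_n = e_1 e_n = 0$, which is automatically satisfied and gives no new info — so $a$ need not vanish, accounting for the parameter in $I + xE_{n+1,1}$. To kill all other off-diagonal entries in the last row, apply $\psi$ to $e_{n-1}^2 = e_n$: we get $e_n = \psi(e_n) = \psi(e_{n-1})^2 = (e_{n-1} + b\, e_{n+1})^2 = e_{n-1}^2 = e_n$, consistent; the point is rather that $\psi(e_i) = e_i$ exactly (not just mod $e_{n+1}$) for $2\le i\le n$ by the induction, so the only surviving free entry is the $(n+1,1)$ entry $a$. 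Finally $\lambda = 1$: since $e_n^2 = e_{n+1}$, applying $\psi$ gives $\psi(e_{n+1}) = \psi(e_n)^2 = e_n^2 = e_{n+1}$, so $\lambda = 1$.

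I expect the main obstacle to be bookkeeping the induction cleanly: one has to set up the right order so that "$\psi(e_i)=e_i$ exactly" is established before it is used, and one must be careful that the hypothesis on $\aut(\cA/\la e_{n+1}\ra)$ is stated in the quotient basis, so the lift introduces annihilator ambiguity that has to be tracked through every relation. The relations $e_i^2 = e_{i+1}$ do the heavy lifting: they propagate rigidity up the chain $e_1\to e_2\to\cdots\to e_{n+1}$, and the relation $e_1 e_n = 0$ is precisely what leaves the single parameter $E_{n+1,1}$ free (consistent with Lemma~\ref{I+xE_1_n-auto}, which exhibits $I + xE_{n+1,1}$ as genuine automorphisms). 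Conversely, Lemma~\ref{I+xE_1_n-auto} already gives the inclusion $\{I + xE_{n+1,1}\mid x\in\Co\}\subseteq\aut\cA$, so only the reverse inclusion requires the argument above.
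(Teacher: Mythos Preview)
Your overall strategy---descend to the quotient, invoke the hypothesis, lift back, and use the relations $e_i^2=e_{i+1}$ and $e_1e_n=0$ to pin down the matrix---is exactly the paper's approach. However, you have reversed the matrix convention, and this introduces a genuine gap.

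The matrix $I+xE_{n1}$ (column convention, as used throughout the paper and confirmed by Lemma~\ref{I+xE_1_n-auto}) sends $e_1\mapsto e_1+xe_n$ and fixes $e_i$ for $i\ge 2$; it does \emph{not} send $e_n\mapsto e_n+xe_1$. So the correct lift is
\[
\psi(e_1)=e_1+xe_n+\mu_1 e_{n+1},\qquad \psi(e_i)=e_i+\mu_i e_{n+1}\ \ (2\le i\le n),\qquad \psi(e_{n+1})=\lambda e_{n+1}.
\]
With this in hand, your claim that applying $\psi$ to $e_1^2=e_2$ gives $\psi(e_2)=e_2$ \emph{exactly} is false: one only gets $\psi(e_2)\equiv e_2\pmod{e_{n+1}}$, because $(e_1+xe_n)^2=e_2+x(e_ne_1)+x^2e_{n+1}$ and the cross term need not vanish. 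Consequently your inductive chain cannot conclude $\psi(e_n)=e_n$ before $x$ is known to be zero, and your derivation of $x=0$ collapses.

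More importantly, your assertion that $e_1e_n=0$ ``gives no new info'' is wrong and is precisely where the convention error bites. Under the correct lift,
\[
0=\psi(e_1)\psi(e_n)=(e_1+xe_n+\mu_1 e_{n+1})(e_n+\mu_n e_{n+1})=e_1e_n+xe_n^2=xe_{n+1},
\]
so this relation is exactly what forces $x=0$. Only \emph{after} that does the squaring chain $e_i^2=e_{i+1}$ yield $\mu_{i+1}=0$ for $1\le i\le n-1$, and $e_n^2=e_{n+1}$ gives $\lambda=1$. Reordering your argument in this way (which is what the paper does) closes the gap; the reverse inclusion is, as you note, Lemma~\ref{I+xE_1_n-auto}.
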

\begin{proof}
    Let $\vf\in\aut\cA$. Then $\vf(\ann\cA)=\ann\cA$, so $\vf$ induces $\tilde\vf\in\aut(\cA/\la e_{n+1}\ra)$ defined by $\tilde\vf(e_i+\la e_{n+1}\ra)=\vf(e_i)+\la e_{n+1}\ra$, $1\le i\le n$. We know that the matrix of $\tilde\vf$ in $(e_i+\la e_{n+1}\ra)_{i=1}^n$ is of the form $I+xE_{n1}$ for some $x\in\Co$. Since $\vf(\ann\cA)=\ann\cA$, we have $\vf(e_{n+1})=ye_{n+1}$ for some $y\in\Co$. Then, the matrix of $\vf$ in $(e_i)_{i=1}^{n+1}$ is of the form 
    \begin{align*}
        I+xE_{n1}+(y-1)E_{n+1,n+1}+\sum_{j=1}^na_{n+1,j}E_{n+1,j}
    \end{align*}
    for some $\{a_{n+1,j}\}_{j=1}^n\subseteq\Co$. In particular, $\vf(e_n)=e_n+a_{n+1,n}e_{n+1}$. Since $e_n^2=e_{n+1}$, we have
    \begin{align*}
        e_{n+1}=(e_n+a_{n+1,n}e_{n+1})^2=\vf(e_n)^2=\vf(e_n^2)=\vf(e_{n+1})=ye_{n+1}.
    \end{align*}
    Hence, $y=1$. Now, $\vf(e_1)=e_1+xe_n+a_{n+1,1}e_{n+1}$ and $e_1e_n=0$ imply
    \begin{align*}
        0=\vf(e_1e_n)=\vf(e_1)\vf(e_n)=(e_1+xe_n+a_{n+1,1}e_{n+1})(e_n+a_{n+1,n}e_{n+1})=xe_{n+1},
    \end{align*}
    so $x=0$. Therefore, $\vf(e_1)=e_1+a_{n+1,1}e_{n+1}$.  Finally, using $\vf(e_i)=e_i+a_{n+1,i}e_{n+1}$ and $e_i^2=e_{i+1}$ for all $1\le i\le n-1$ we get
    \begin{align*}
        e_{i+1}=(e_i+a_{n+1,i}e_{n+1})^2=\vf(e_i)^2=\vf(e_i^2)=\vf(e_{i+1})=e_{i+1}+a_{n+1,i+1}e_{n+1}.
    \end{align*}
    Therefore, $a_{n+1,i}=0$ for all $2\le i\le n$. Thus, the matrix of $\vf$ relative to the basis $(e_i)_{i=1}^{n+1}$ has the form $I+a_{n+1,1}E_{n+1,1}$.
     
    Conversely, the linear map $\vf$ defined by $\vf(e_1)=e_1+xe_{n+1}$, $\vf(e_i)=e_i$, $2\le i\le n+1$, is an automorphism of $\cA$, by Lemma~\ref{I+xE_1_n-auto}.
\end{proof}

\begin{lemma}\label{constructing-R_n+1}
Let $n\ge 3$ and $\cA\in\nil^{\gamma}_n$ such that  $\ann\cA=\la e_{n}\ra$ and $e_i^2=e_{i+1}$, for all $1\le i\le n-1$. Suppose that $\aut(\cA)=\{I+xE_{n1}\mid x\in\Co\}$ in the basis $(e_i)_{i=1}^n$. Then, there is a parametric family of pairwise non-isomorphic $1$-dimensional central extensions $\cB$ of $\cA$ with basis $(e_i)_{i=1}^{n+1}$, extending the basis of $\cA$, such that $\ann\cB=\la e_{n+1}\ra$, $e_1e_n=0$, $e_i^2=e_{i+1}$, for all $1\le i\le n$, and the structure constants $c_{ij}^{n+1}$ of $\cB$ in this basis are arbitrary independent complex parameters for all $1\le i\ne j\le n$, $(i,j)\ne (1,n)$. 
\end{lemma}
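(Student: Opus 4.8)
The plan is to realise the desired family through the Skjelbred--Sund construction of $1$-dimensional central extensions. Since $\cA$ satisfies no identities beyond bilinearity, $\mathrm{Z}^2(\cA,\Co)$ consists of all bilinear forms $\cA\times\cA\to\Co$, so I may prescribe $\theta$ freely on the basis. For a parameter vector $\mathbf t=(t_{ij})$ indexed by the pairs with $1\le i\ne j\le n$ and $(i,j)\ne(1,n)$, let $\theta_{\mathbf t}$ be the $2$-cocycle with $\theta_{\mathbf t}(e_n,e_n)=1$, $\theta_{\mathbf t}(e_i,e_i)=0$ for $1\le i\le n-1$, $\theta_{\mathbf t}(e_1,e_n)=0$, and $\theta_{\mathbf t}(e_i,e_j)=t_{ij}$ on the remaining pairs, and set $\cB_{\mathbf t}:=\cA_{\theta_{\mathbf t}}$, with $e_{n+1}$ the canonical central element of $\cA\oplus\Co$. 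The multiplicative claims are then read off from $e_i\cdot_{\theta_{\mathbf t}}e_j=e_ie_j+\theta_{\mathbf t}(e_i,e_j)e_{n+1}$: using $\ann\cA=\la e_n\ra$ (so $e_ne_j=e_je_n=0$ for all $j$), $e_1e_n=0$, and $e_i^2=e_{i+1}$ in $\cA$ for $1\le i\le n-1$, one obtains $e_i^2=e_{i+1}$ for all $1\le i\le n$, $e_1e_n=0$, and $c_{ij}^{n+1}=t_{ij}$ on the prescribed pairs. For the annihilator, the identity $\ann{\cB_{\mathbf t}}=(\ann\cA\cap\ann{\theta_{\mathbf t}})\oplus\Co$ combined with $\theta_{\mathbf t}(e_n,e_n)=1\ne0$ (hence $e_n\notin\ann{\theta_{\mathbf t}}$) yields $\ann{\cB_{\mathbf t}}=\la e_{n+1}\ra$. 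One also checks that $\cB_{\mathbf t}\in\nil^\gamma_{n+1}$, its structure constants relative to $(e_i)_{i=1}^{n+1}$ extending those of $\cA$ by constants $c_{ij}^{n+1}$ with $i,j\le n$.

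The heart of the proof is that distinct parameter values give non-isomorphic algebras, and I would establish this by direct elimination rather than through the cohomological criterion. Let $\psi\colon\cB_{\mathbf t}\to\cB_{\mathbf s}$ be an isomorphism. As isomorphisms preserve annihilators, $\psi(e_{n+1})=\mu e_{n+1}$ for some $\mu\in\Co^*$, and $\psi$ descends to an automorphism of $\cB_{\mathbf t}/\la e_{n+1}\ra\cong\cA\cong\cB_{\mathbf s}/\la e_{n+1}\ra$, which by hypothesis has the form $I+xE_{n1}$ for some $x\in\Co$. Hence $\psi(e_1)=e_1+xe_n+c_1e_{n+1}$ and $\psi(e_i)=e_i+c_ie_{n+1}$ for $2\le i\le n$, with suitable $c_i\in\Co$. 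Two simplifications keep the substitution elementary: $e_{n+1}$ annihilates $\cB_{\mathbf s}$, so the $c_ie_{n+1}$ summands drop out of every product; and $I+xE_{n1}$ fixes $\la e_2,\dots,e_n\ra\supseteq\cA^2$, so the $\cA$-component of $\psi(e_ie_j)$ is again $e_ie_j$. Feeding the relations of $\cB_{\mathbf t}$ into $\psi$ then reduces each one to a scalar identity: $e_n^2=e_{n+1}$ gives $\mu=1$; $e_i^2=e_{i+1}$ with $2\le i\le n-1$ (here $n\ge3$ makes the range non-empty) gives $c_3=\dots=c_n=0$; $e_1^2=e_2$ gives $c_2=x(x+s_{n1})$ where $s_{n1}=\theta_{\mathbf s}(e_n,e_1)$. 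The decisive relation is $e_1e_n=0$: applying $\psi$, $0=\psi(e_1)\psi(e_n)=\theta_{\mathbf s}(e_1+xe_n,e_n)e_{n+1}=x\,\theta_{\mathbf s}(e_n,e_n)e_{n+1}=xe_{n+1}$, so $x=0$. Then also $c_2=0$, $\psi$ fixes every $e_i$ modulo $e_{n+1}$, and equating the structure constants $c_{ij}^{n+1}$ of $\cB_{\mathbf t}$ and $\cB_{\mathbf s}$ on each admissible pair forces $t_{ij}=s_{ij}$, i.e.\ $\mathbf t=\mathbf s$.

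I do not expect a genuine obstacle beyond keeping the bookkeeping straight in the elimination---tracking which structure constants of $\cA$ vanish because $\gamma_{ij}^k=0$ for $k\le\max\{i,j\}$, and using the centrality of $e_{n+1}$ at each product. The one conceptual point worth highlighting is why the entire family, not merely its generic member, is pairwise non-isomorphic: imposing $\theta_{\mathbf t}(e_1,e_n)=0$ alongside $\theta_{\mathbf t}(e_n,e_n)=1$ collapses the one-parameter automorphism group $\{I+xE_{n1}\}$ of $\cA$ down to $x=0$, so no nontrivial automorphism of $\cA$ can relate the free parameters.
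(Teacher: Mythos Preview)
Your argument is correct. The route, however, differs from the paper's: instead of computing $\mathrm{H}^2(\cA,\Co)=\la[\Delta_{ij}]\mid i\ne j\ra\oplus\la[\Delta_{nn}]\ra$, writing out the action $\phi^TA\phi$ of $\phi=I+xE_{n1}$ on a generic cocycle, and observing that with $\alpha_{nn}\ne0$ one may normalise $\alpha_{1n}=0$ to obtain distinct orbit representatives, you bypass the cohomological setup entirely and argue by direct isomorphism elimination. In effect your chain of deductions ($e_n^2=e_{n+1}\Rightarrow\mu=1$; $e_1e_n=0\Rightarrow x=0$; $e_i^2=e_{i+1}\Rightarrow c_{i+1}=0$) reproves, inside this single lemma, exactly the content of Lemma~\ref{Aut(A)-exactly-I+xE_n1}, and the remaining free constant $c_1$ you are left with is precisely the automorphism $I+c_1E_{n+1,1}$ predicted there. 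Your approach is more self-contained and avoids invoking the Skjelbred--Sund isomorphism criterion; the paper's fits the running cohomological machinery that is reused verbatim for the commutative and anticommutative variants. Both identify the same mechanism: the normalisation $\theta(e_n,e_n)=1$, $\theta(e_1,e_n)=0$ kills the one-parameter family $I+xE_{n1}$, so the remaining coordinates of $\theta$ become genuine isomorphism invariants.
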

\begin{proof}
    We have
    \begin{align*}
    {\rm B}^2(\cA)=\la\Dt 11\ra\oplus\bigoplus_{m=2}^{n-1}\La\Dt mm+\sum_{1\le i\ne j\le m}c_{ij}^{m+1}\Dt ij\Ra,
    \end{align*}
    so ${\rm H}^2(\cA)=\la\Dl ij\mid 1\le i\ne j\le n\ra\oplus\la\Dl nn\ra$.
    
    Let $\phi=I+xE_{n1}\in\aut(\cA)$ and $\0=\sum_{1\le i\ne j\le n}\af_{ij}\Dl ij+\af_{nn}\Dl nn\in {\rm B}^2(\cA)$. Consider the corresponding matrix $A=\sum_{1\le i\ne j\le n}\af_{ij}E_{ij}+\af_{nn}E_{nn}$. Then
    \begin{align}
        \phi^TA\phi&=(I+xE_{1n})\left(\sum_{1\le i\ne j\le n}\af_{ij}E_{ij}+\af_{nn}E_{nn}\right)(I+xE_{n1})\notag\\
        &=A+x\sum_{i=1}^n\af_{ni}E_{1i}+x\sum_{i=1}^n\af_{in}E_{i1}+x^2\af_{nn}E_{11}\notag\\
        &=x(\af_{n1}+\af_{1n}+x\af_{nn})E_{11}+\sum_{i=2}^n(\af_{1i}+x\af_{ni})E_{1i}+\sum_{i=2}^n(\af_{i1}+x\af_{in})E_{i1}\notag\\
        &\quad+\sum_{2\le i\ne j\le n}\af_{ij}E_{ij}+\af_{nn}E_{nn}.\label{phi^T.A.phi}
    \end{align}
    So, $\phi\cdot\0=\sum_{1\le i\ne j\le n}\af^*_{ij}\Dl ij+\af^*_{nn}\Dl nn$, where $\af^*_{1i}=\af_{1i}+x\af_{ni}$, $\af^*_{i1}=\af_{i1}+x\af_{in}$, for all $2\le i\le n$, $\af^*_{ij}=\af_{ij}$, for all $2\le i\ne j\le n$, and $\af^*_{nn}=\af_{nn}$.  
    
    If $\af_{nn}\ne 0$, then choosing $x=-\af_{1n}\af_{nn}^{-1}$, we obtain the family of representatives of distinct orbits
    $$
    \La\sum_{1\le i\ne j\le n,(i,j)\ne(1,n)}c_{ij}^{n+1}\Dl ij+\Dl nn\Ra_{c_{ij}^{n+1}\in\Co}.
    $$ 
    It determines the desired family of algebras $\cB$. 
\end{proof}

\begin{definition}\label{R_n-defn}
Let $n\ge 3$. Denote by $\R_n$ the family of nilpotent algebras with basis $(e_i)_{i=1}^{n}$ satisfying~\eqref{g_ij=0-for-k<=max(ij)}, such that 
$e_i^2=e_{i+1}$, for all $1\le i\le n-1$, $c_{21}^3=1$, $c_{1i}^{i+1}=0$, for all $2\le i\le n-1$, and with the remaining structure constants $c_{ij}^{k}$ being arbitrary independent complex parameters, for all $1\le i\ne j\le n$, $k>\max\{i,j\}$.
\end{definition}

Notice that, given $\mathcal{A}\in\R_{n+1}$ with $n\ge 3$, since by~\eqref{g_ij=0-for-k<=max(ij)} we have $e_{n+1}\in\ann{\mathcal{A}}$, it follows that $\la e_{n+1}\ra$ is an ideal of $\mathcal{A}$ and $\mathcal{A}/\la e_{n+1}\ra$ can be seen naturally as an element of $\R_{n}$, relative to the ordered basis $(\overline{e_i})_{i=1}^{n}$, where $\overline{e_i}=e_i+\la e_{n+1}\ra$. This property will be important in arguments by induction, as the one which follows.
   
\begin{lemma}\label{L:ann-aut}
Let $\mathcal{A}\in\R_{n}$ with $n\geq 3$. Then the following hold:
\begin{enumerate}[(a)]
\item $\ann{\mathcal{A}}=\la e_{n}\ra$.
\item $\aut{\mathcal{A}}=\{I+xE_{n1}\mid x\in\Co\}$, relative to the basis $(e_i)_{i=1}^n$.
\end{enumerate}
\end{lemma}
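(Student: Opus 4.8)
The plan is to prove both statements by induction on $n$, using the structure of $\R_n$ described in Definition~\ref{R_n-defn} together with the quotient property noted just before the lemma, namely that $\mathcal{A}/\la e_n\ra\in\R_{n-1}$ whenever $\mathcal{A}\in\R_n$ and $n\ge 4$. For the base case $n=3$, I would simply compute directly: an algebra in $\R_3$ has $e_1^2=e_2$, $e_2^2=e_3$, $e_2e_1=e_3$, $e_1e_2=0$ (the constraint $c_{12}^3=0$), and no other nonzero products since~\eqref{g_ij=0-for-k<=max(ij)} forces all products landing in $e_1$ or $e_2$ to vanish and $e_3\in\ann{\mathcal{A}}$; one then checks (a) $\ann{\mathcal{A}}=\la e_3\ra$ by inspecting which basis vectors annihilate, and (b) $\aut{\mathcal{A}}=\{I+xE_{31}\}$ by a short matrix computation, exactly paralleling the argument in Lemma~\ref{Aut(A)-exactly-I+xE_n1}.

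For the inductive step, suppose the statement holds for $n-1\ge 3$ and let $\mathcal{A}\in\R_n$. I would first handle (a): since $e_n\in\ann{\mathcal{A}}$ by~\eqref{g_ij=0-for-k<=max(ij)}, the inclusion $\la e_n\ra\subseteq\ann{\mathcal{A}}$ is automatic. For the reverse inclusion, suppose $x=\sum_{i=1}^n\lb_i e_i\in\ann{\mathcal{A}}$; then the image $\bar x$ in $\mathcal{A}/\la e_n\ra\in\R_{n-1}$ lies in $\ann{\mathcal{A}/\la e_n\ra}=\la e_{n-1}\ra$ by the induction hypothesis, so $\lb_1=\dots=\lb_{n-2}=0$ and $x=\lb_{n-1}e_{n-1}+\lb_n e_n$. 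But then $0=x e_{n-1}\ni\lb_{n-1}e_{n-1}^2=\lb_{n-1}e_n$ (using $e_{n-1}^2=e_n$), forcing $\lb_{n-1}=0$. Hence $\ann{\mathcal{A}}=\la e_n\ra$.

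Next, for (b), I would invoke Lemma~\ref{Aut(A)-exactly-I+xE_n1} with the identification $n\mapsto n-1$: that lemma says precisely that if $\cA'\in\nil^\gamma_n$ satisfies $\ann{\cA'}=\la e_n\ra$, $e_1e_{n-1}=0$, $e_i^2=e_{i+1}$ for $1\le i\le n-1$, and $\aut(\cA'/\la e_n\ra)=\{I+xE_{n-1,1}\}$ in the induced basis, then $\aut{\cA'}=\{I+xE_{n1}\}$. So I need to verify the hypotheses of that lemma for $\cA'=\mathcal{A}$: we have $e_i^2=e_{i+1}$ for all $1\le i\le n-1$ and $e_1e_{n-1}=0$ (this is $c_{1,n-1}^n=0$, which holds since Definition~\ref{R_n-defn} sets $c_{1i}^{i+1}=0$ for $2\le i\le n-1$, in particular for $i=n-1$), part (a) gives $\ann{\mathcal{A}}=\la e_n\ra$, and $\aut(\mathcal{A}/\la e_n\ra)=\{I+xE_{n-1,1}\}$ is exactly the induction hypothesis (b) applied to $\mathcal{A}/\la e_n\ra\in\R_{n-1}$. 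Lemma~\ref{Aut(A)-exactly-I+xE_n1} then delivers (b) for $\mathcal{A}$, completing the induction.

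I expect the main obstacle to be bookkeeping rather than any conceptual difficulty: one must be careful that the indexing hypotheses of Lemma~\ref{Aut(A)-exactly-I+xE_n1} (stated there for $\cA\in\nil^\gamma_{n+1}$) are matched up correctly after the index shift, and in particular that the single vanishing product $e_1e_{n-1}=0$ required by that lemma is indeed among the constraints imposed in Definition~\ref{R_n-defn} — which it is, as the case $i=n-1$ of $c_{1i}^{i+1}=0$. A minor point to check carefully in part (a) is that the argument genuinely uses $e_{n-1}^2=e_n$ and does not secretly require $n\ge 4$ versus $n\ge 3$; the base case $n=3$ is therefore treated separately as above to be safe, and everything for $n\ge 4$ reduces cleanly to the two previous lemmas.
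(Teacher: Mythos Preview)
Your proposal is correct and follows essentially the same approach as the paper: both argue by simultaneous induction on $n$, verify the base case $n=3$ by direct inspection of the unique algebra in $\R_3$, deduce (a) at the inductive step by passing to the quotient $\mathcal{A}/\la e_n\ra\in\R_{n-1}$ and then using $e_{n-1}^2=e_n$ to kill the remaining coefficient, and deduce (b) by invoking Lemma~\ref{Aut(A)-exactly-I+xE_n1} once its hypotheses (in particular $e_1e_{n-1}=0$, coming from $c_{1,n-1}^n=0$) have been checked. The only cosmetic difference is the index shift (you induct from $n-1$ to $n$, the paper from $n$ to $n+1$); also, your expression ``$0=xe_{n-1}\ni\lambda_{n-1}e_{n-1}^2$'' should be cleaned up to an equality, since $xe_{n-1}=\lambda_{n-1}e_{n-1}^2+\lambda_n e_n e_{n-1}=\lambda_{n-1}e_n$.
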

\begin{proof}
We prove both statements simultaneously by induction on $n\geq 3$. Indeed, $\R_3$ consists of a single point, which as an algebra is defined by the following multiplication (as usual, only nonzero products of the basis elements are shown):
\begin{equation*}
\mathcal{A}:\ e_1 e_1 = e_2, \quad e_2 e_1= e_3, \quad e_2 e_2=e_3.
\end{equation*}
It is easy to see by direct inspection that $\ann{\mathcal{A}}=\la e_{3}\ra$ and $\aut{\mathcal{A}}=\{I+xE_{31}\mid x\in\Co\}$ in $(e_i)_{i=1}^{3}$. 

Now let $\mathcal{A}\in\R_{n+1}$. Then, viewing $\mathcal{A}/\la e_{n+1}\ra$ as an algebra from $\R_n$, as explained above, the induction hypothesis implies that $\ann{\cA/\la e_{n+1}\ra}=\la \overline{e_{n}}\ra$. Let $v=\sum_{i=1}^{n+1}\lambda_i e_i\in\ann \cA$. Since $e_{n+1}\in\ann \cA$, we can assume that $\lambda_{n+1}=0$ and our goal is to show that $\lambda_i=0$ for all $i$. Since $\overline v=\sum_{i=1}^{n}\lambda_i \overline{e_i}\in\ann{\cA/\la e_{n+1}\ra}$, we deduce that $\lambda_i=0$ for all $i\leq n-1$.
Thus, $\lambda_n e_n\in\ann \cA$, so $0=\lambda_n e_n e_n=\lambda_n e_{n+1}$ and $\lambda_n=0$, proving our claim that $\ann \cA=\la e_{n+1}\ra$.

The induction hypothesis also gives that $\aut {(\cA/\la e_{n+1}\ra)}=\{I+xE_{n1}\mid x\in\Co\}$ in $(\overline{e_i})_{i=1}^n$. Then, by Lemma~\ref{Aut(A)-exactly-I+xE_n1}$, \aut \cA=\{I+xE_{n+1,1}\mid x\in\Co\}$ in the basis $(e_i)_{i=1}^{n+1}$.
\end{proof}

\begin{proposition}\label{P:dim:nilp:Rn}
We have $\dim\left(\overline{\bigcup_{\cA\in\R_n}O(\cA)}\right)=\frac{n(n-1)(n+1)}3$. 
\end{proposition}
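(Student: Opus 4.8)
The strategy is to compute the dimension of $\overline{\bigcup_{\cA\in\R_n}O(\cA)}$ via the orbit–dimension formula $\dim O(\cA) = n^2 - \dim\aut{\cA}$, together with the count of free parameters in the family $\R_n$. First I would recall that by Lemma~\ref{L:ann-aut}(b), every $\cA\in\R_n$ has $\aut{\cA}=\{I+xE_{n1}\mid x\in\Co\}$, a $1$-dimensional group, so $\dim O(\cA)=n^2-1$ for all $\cA\in\R_n$. Next I would count the dimension of the parameter space of $\R_n$: by Definition~\ref{R_n-defn}, the structure constants that are free are the $c_{ij}^k$ with $1\le i\ne j\le n$ and $k>\max\{i,j\}$, \emph{minus} the normalizations $c_{ii}^{i+1}=1$ (already fixed, not among the $i\ne j$ constants), $c_{21}^3=1$, and $c_{1i}^{i+1}=0$ for $2\le i\le n-1$. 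So the number of free parameters equals $\#\{(i,j,k): 1\le i\ne j\le n,\ k>\max\{i,j\}\} - 1 - (n-2)$, which I would evaluate to a cubic in $n$.

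Then I would assemble the two pieces. Writing $\R_n$ as the image of an irreducible parameter variety $P$ (an affine space of the dimension just computed) under the map $P\times\mathrm{GL}_n(\Co)\to \nil_n$, $(p,g)\mapsto g\cdot\cA_p$, the closure $\overline{\bigcup_{\cA\in\R_n}O(\cA)}$ is irreducible and its dimension is $\dim P + (n^2-1) - d$, where $d$ is the generic dimension of the fibres of this map. The fibre over a generic point consists of pairs $(p',g')$ with $g'\cdot\cA_{p'}\cong \cA_p$; since the algebras $\cA_p$ for distinct $p\in P$ are pairwise non-isomorphic (this is asserted in Lemma~\ref{constructing-R_n+1} and should be verified to persist along the inductive construction, or argued directly from the rigidity of the automorphism group together with the normal form), the parameter $p'$ is determined, and $g'$ ranges over the coset $g\cdot\aut{\cA_p}$, which is $1$-dimensional. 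Hence $d=1$ and
\[
\dim\left(\overline{\bigcup_{\cA\in\R_n}O(\cA)}\right) = \dim P + (n^2-1) - 1 = \dim P + n^2 - 2.
\]
It then remains to check the arithmetic identity $\dim P + n^2 - 2 = \frac{n(n-1)(n+1)}{3}$, i.e. $\dim P = \frac{n(n-1)(n+1)}{3} - n^2 + 2$; a short computation shows $\#\{(i,j,k): 1\le i\ne j\le n,\ k>\max\{i,j\}\} = 2\sum_{m=1}^{n}(m-1)(n-m) \cdot \tfrac12$... more carefully, for each ordered pair $i<j$ there are $n-j$ choices of $k$, contributing $\sum_{j}(j-1)(n-j)$, and the same again for $i>j$, giving $2\sum_{j=1}^n (j-1)(n-j)$; subtracting $1+(n-2)=n-1$ should yield exactly $\frac{n(n-1)(n+1)}{3}-n^2+2$, which I would verify by direct expansion of the sum $\sum_{j=1}^n(j-1)(n-j)=\binom{n}{3}$.

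The main obstacle is the fibre-dimension step, i.e. rigorously establishing that the generic fibre of $(p,g)\mapsto g\cdot\cA_p$ is exactly $1$-dimensional. This requires two inputs: that the $\cA_p$ are generically pairwise non-isomorphic (so the $p$-coordinate of the fibre is generically a point), and that $\aut{\cA_p}$ has dimension exactly $1$ (so the $g$-coordinate contributes exactly $1$). The second is handed to us by Lemma~\ref{L:ann-aut}(b). For the first, the cleanest route is to invoke Lemma~\ref{constructing-R_n+1}'s ``pairwise non-isomorphic'' conclusion at each inductive stage, or alternatively to note that an isomorphism $\cA_p\to\cA_{p'}$ must preserve the canonical flag $\ann{\cdot}\subseteq\cdots$ and hence be upper-triangular of the form dictated by the normal form, forcing $p=p'$ on the free coordinates; I would spell this out just enough to conclude that the map $P\times\mathrm{GL}_n(\Co)\to\nil_n$ has finite-to-one composition with projection to the moduli of isomorphism classes, so that the fibre dimension is genuinely $\dim\aut{\cA_p}=1$.
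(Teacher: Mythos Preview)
Your approach is essentially the paper's: use Lemma~\ref{L:ann-aut}(b) to get $\dim O(\cA)=n^2-1$, use Lemma~\ref{constructing-R_n+1} (inductively) to get pairwise non-isomorphism, and add the parameter count. The paper computes $p_n=\dim\R_n$ by the recursion $p_3=0$, $p_{n+1}=p_n+n(n-1)-1$, obtaining $p_n=\frac{(n^2-1)(n-3)}{3}$, and concludes $\dim=p_n+(n^2-1)$. Your direct count $\dim P=2\binom{n}{3}-(n-1)$ gives the same $p_n$.

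However, there is a double-counting slip in your fibre argument. The source of your map $(p,g)\mapsto g\cdot\cA_p$ is $P\times\mathrm{GL}_n(\Co)$, which has dimension $\dim P+n^2$, not $\dim P+(n^2-1)$; so the fibre-dimension theorem gives
\[
\dim\Bigl(\overline{\textstyle\bigcup_{\cA\in\R_n}O(\cA)}\Bigr)=\dim P+n^2-d=\dim P+n^2-1,
\]
not $\dim P+n^2-2$. Equivalently: once you know the orbits are pairwise disjoint and each has dimension $n^2-1$, the union already has dimension $\dim P+(n^2-1)$ with no further subtraction; you have subtracted $\dim\aut{\cA_p}=1$ twice, once in passing from $n^2$ to the orbit dimension $n^2-1$, and once again as the fibre dimension $d$. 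With the corrected formula, the identity to check is $\dim P+n^2-1=\frac{n(n-1)(n+1)}{3}$, which does hold with your value $\dim P=2\binom{n}{3}-(n-1)=\frac{(n^2-1)(n-3)}{3}$, whereas your stated identity $\dim P+n^2-2=\frac{n(n-1)(n+1)}{3}$ fails already at $n=3$ (it would give $\dim P=1$ instead of $0$).
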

\begin{proof}
For any $\cA\in\R_n$, we know that $\dim\aut \cA=1$, by Lemma~\ref{L:ann-aut}, and thus $\dim O(\cA)=\dim\operatorname{GL}_n(\Co)-1=n^2-1$. Moreover, the algebras in $\R_n$ are pairwise non-isomorphic, by Lemma~\ref{constructing-R_n+1}, so the corresponding orbits are disjoint. 

Let $p_n=\dim \R_n$, the number of independent parameters of the family $\R_n$. We calculate $p_n$ by induction on $n$. We have $p_3=0$ and $p_{n+1}=p_n+n(n-1)-1$, for all $n\ge 3$. Therefore, $p_n=\frac{(n^2-1)(n-3)}3$. Thus, $\dim\left(\overline{\bigcup_{\cA\in\R_n}O(\cA)}\right)=n^2-1+p_n=\frac{n(n-1)(n+1)}{3}$.
\end{proof}

Before our main result of this section, we need a technical observation on the inverse of a lower triangular matrix. 

\begin{lemma}\label{a'_ii-in-terms-of-a_ij}
	Let $A=(a_{ij})_{i,j=1}^n$ be an invertible lower triangular matrix of size $n$ and $A^{-1}=(a'_{ij})_{i,j=1}^n$. Then for all $i> j$ we have $a'_{ij}=-a_{ii}^{-1}a_{jj}^{-1}a_{ij}-a_{jj}^{-1}\sum_{k=j+1}^{i-1}a'_{ik}a_{kj}$. In particular, for all $i\geq j$, $a'_{ij}$ is uniquely determined by $a_{pq}$ with $i\ge p\ge q\ge j$. 
\end{lemma}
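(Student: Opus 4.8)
The plan is to read the identity directly off the matrix equation $A^{-1}A=I$ and then solve for $a'_{ij}$. First I would recall the standard fact that the inverse of an invertible lower triangular matrix is again lower triangular, so that $a'_{ik}=0$ whenever $k>i$ (and of course $a_{kj}=0$ whenever $k<j$). Comparing the $(i,i)$ entries of $A^{-1}A=I$ gives $a'_{ii}a_{ii}=1$, i.e.\ $a'_{ii}=a_{ii}^{-1}$; here all the diagonal entries $a_{ii}$ are nonzero because $A$ is invertible and triangular, so these inverses make sense.

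Next, fix $i>j$ and look at the $(i,j)$ entry of $A^{-1}A=I$, which vanishes:
\begin{equation*}
\sum_{k}a'_{ik}a_{kj}=0.
\end{equation*}
By the triangularity of $A^{-1}$ and of $A$ noted above, the only possibly nonzero terms are those with $j\le k\le i$. Splitting off the two extreme terms $k=i$ and $k=j$ yields
\begin{equation*}
a'_{ii}a_{ij}+\sum_{k=j+1}^{i-1}a'_{ik}a_{kj}+a'_{ij}a_{jj}=0,
\end{equation*}
and solving for $a'_{ij}$ while substituting $a'_{ii}=a_{ii}^{-1}$ gives exactly the asserted formula $a'_{ij}=-a_{ii}^{-1}a_{jj}^{-1}a_{ij}-a_{jj}^{-1}\sum_{k=j+1}^{i-1}a'_{ik}a_{kj}$.

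Finally, for the ``in particular'' assertion I would argue by induction on $i-j\ge 0$. The base case $i=j$ is $a'_{ii}=a_{ii}^{-1}$, which depends only on $a_{ii}$. For $i>j$, the displayed formula expresses $a'_{ij}$ in terms of $a_{ii}$, $a_{jj}$, $a_{ij}$ and the products $a'_{ik}a_{kj}$ with $j<k<i$; each such $a'_{ik}$ has $i-k<i-j$, so by the induction hypothesis it is determined by the entries $a_{pq}$ with $i\ge p\ge q\ge k\ge j$, while each $a_{kj}$ trivially involves only indices with $i\ge k\ge j$. Collecting these, $a'_{ij}$ is determined by $\{a_{pq}\mid i\ge p\ge q\ge j\}$, which completes the induction. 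The computation itself is routine; the only point requiring a little care is the bookkeeping of index ranges in this induction, but there is no genuine structural obstacle.
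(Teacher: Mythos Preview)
Your argument is correct and follows essentially the same route as the paper: read the $(i,j)$ entry of $A^{-1}A=I$, use triangularity to restrict the sum to $j\le k\le i$, split off the extreme terms, and solve for $a'_{ij}$. The only cosmetic difference is that for the dependency claim the paper does backward induction on $j$ with $i$ fixed, whereas you induct on $i-j$; both work equally well.
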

\begin{proof}
	If $i=j$, then $a'_{ij}=a_{ij}^{-1}$. Otherwise, $\sum_{k=j}^i a'_{ik}a_{kj}=0$, whence $a'_{ij}=-a_{jj}^{-1}\sum_{k=j+1}^ia'_{ik}a_{kj}=-a_{ii}^{-1}a_{jj}^{-1}a_{ij}-a_{jj}^{-1}\sum_{k=j+1}^{i-1}a'_{ik}a_{kj}$. The second statement follows by backward induction on $j$ with $i$ fixed.
\end{proof}

Now we can state and prove our main result about the variety $\nil_n$ of nilpotent algebras. Notice that the proof specifically gives an algorithmic construction of a degeneration $\R_n\to N$ from the family $\R_n$ to any given $n$-dimensional nilpotent algebra $N$.

\begin{theorem}\label{T:Rn:generic}
	For any $n\ge 3$, the family $\R_n$ is generic in $\nil_n$. In particular, $\dim(\nil_n)=\frac{n(n-1)(n+1)}{3}$.
\end{theorem}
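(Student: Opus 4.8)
The plan is to show that every $n$-dimensional nilpotent algebra $N$ lies in $\overline{\bigcup_{\cA\in\R_n}O(\cA)}$, which combined with irreducibility of $\nil_n$ (Proposition~\ref{nilp}) and Proposition~\ref{P:dim:nilp:Rn} yields both assertions: genericity gives $\overline{\bigcup_{\cA\in\R_n}O(\cA)}=\nil_n$, hence $\dim(\nil_n)=\dim\left(\overline{\bigcup_{\cA\in\R_n}O(\cA)}\right)=\frac{n(n-1)(n+1)}{3}$. I would proceed by induction on $n$, the base case $n=3$ being immediate since $\R_3$ is a single point whose orbit closure must be all of the (irreducible, by Proposition~\ref{nilp}) variety $\nil_3$ once one checks the dimension count, or alternatively handled directly. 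Since it suffices to prove $\R_n\to N$ for $N$ ranging over a set of representatives of isomorphism classes, by Lemma~\ref{Nil^gm} I may assume $N\in\nil^{\gamma}_n$, i.e.\ the structure constants $\gamma_{ij}^k$ of $N$ vanish whenever $k\le\max\{i,j\}$.

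The key step is to realize $N$ as a degeneration of a member of $\R_n$ in two stages, matching the inductive structure of Definition~\ref{R_n-defn} and Lemma~\ref{constructing-R_n+1}. First I would pass to $N/\la e_n\ra$ (legitimate since $e_n\in\ann N$ by~\eqref{g_ij=0-for-k<=max(ij)}), which is an $(n-1)$-dimensional nilpotent algebra, so by the induction hypothesis there is a degeneration $\R_{n-1}\to N/\la e_n\ra$, realized by a family of invertible matrices $g(t)\in\operatorname{GL}_{n-1}(\Co)$ and a family $\cA(t)\in\R_{n-1}$ with $g(t)\cdot\cA(t)\to N/\la e_n\ra$ as $t\to 0$. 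Next I would lift this: using Lemma~\ref{constructing-R_n+1} (with $n-1$ in place of $n$), each $\cA(t)$ admits a central extension $\cB(t)\in\R_n$ in which the structure constants $c_{ij}^n$ for $(i,j)\ne(1,n-1)$, $i\ne j$, are free parameters; I would choose these parameters, together with a block-lower-triangular extension of $g(t)$ acting on the extra coordinate $e_n$, so that in the limit the $n$-th components of all products reproduce the $\gamma_{ij}^n$ of $N$. Lemma~\ref{a'_ii-in-terms-of-a_ij}, on the entries of the inverse of a lower triangular matrix, is exactly the technical tool needed here: it controls how the chosen base-change interacts with the structure constants landing in $e_n$, and guarantees that the required parameters can be solved for. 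The $c_{ii}^{i+1}=1$ normalization and the vanishing of $c_{1i}^{i+1}$ in $\R_n$ are accommodated because degenerations only need to hit $N$ in the limit, and rescaling the $g(t)$ appropriately degenerates the "$1$"s to the actual (possibly zero) values $\gamma_{ii}^{i+1}$ while the off-diagonal parameters absorb everything else.

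The main obstacle I anticipate is bookkeeping: one must exhibit a single parametrized family $g(t)$ and a single choice of the free structure constants in $\cB(t)$ that simultaneously degenerates (i) the products with target in $\la\overline{e_i}:i\le n-1\ra$, handled by the inductive degeneration of $N/\la e_n\ra$, and (ii) the products with target $e_n$, handled by the new free parameters — without the two interfering, since the base-change on $e_n$ feeds back into the structure constants via the inverse matrix. I would organize this by taking $g(t)$ to be block lower triangular, $g(t)=\begin{pmatrix} h(t) & 0\\ * & \lambda(t)\end{pmatrix}$ with $h(t)$ the $(n-1)$-dimensional family from the induction and $\lambda(t),*$ chosen afterwards, and then invoke Lemma~\ref{a'_ii-in-terms-of-a_ij} to see that the $(n,j)$-entries of $g(t)^{-1}$ depend only on data one has already fixed, so the equations for the free parameters $c_{ij}^n$ become triangular and solvable. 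Checking that the resulting $\cB(t)$ genuinely lies in $\R_n$ for generic $t$ (i.e.\ that $\cA(t)\in\R_{n-1}$ satisfies the hypotheses of Lemma~\ref{constructing-R_n+1}, which is guaranteed by Lemma~\ref{L:ann-aut}) closes the induction.
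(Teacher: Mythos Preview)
Your overall approach---induction on $n$, reduction to $\nil_n^\gamma$, quotienting by $\la e_n\ra$, and lifting the inductively given degeneration via a block-lower-triangular extension of the base change, with Lemma~\ref{a'_ii-in-terms-of-a_ij} controlling the recursion---is exactly the paper's. But there is one genuine gap.

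You take the upper-left block $h(t)$ to be entirely fixed by the induction hypothesis and plan to choose only the new bottom row and the scalar $\lambda(t)$. A parameter count shows this is one degree of freedom short. You must satisfy $(n-1)^2$ limit equations (one for each pair $(i,j)$ with $1\le i,j\le n-1$, targeting $\gamma_{ij}^n$). The free structure constants $c_{ij}^n$ in $\R_n$ number only $(n-1)(n-2)-1$, since Definition~\ref{R_n-defn} fixes $c_{ii}^n=0$ for $i<n-1$, $c_{n-1,n-1}^n=1$, and crucially $c_{1,n-1}^n=0$. Among the $n$ new matrix entries $a_{n,j}(t)$, the entry $a_{n,1}(t)$ is irrelevant (it corresponds to the automorphism $I+xE_{n,1}$ of any $\cB\in\R_n$), leaving $n-1$ useful ones. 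Total: $(n-1)(n-2)-1+(n-1)=(n-1)^2-1$. The system is over-determined by precisely the equation for $(i,j)=(1,n-1)$, where the natural parameter $c_{1,n-1}^n$ has been forced to vanish and no substitute is available in your ansatz.

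The paper's fix is the observation that the entry $a_{n-1,1}(t)$ of $h(t)$ itself is irrelevant to the $(n-1)$-dimensional degeneration, for the same reason one level down: $e_{n-1}$ lies in the annihilator of every member of $\R_{n-1}$, so adding a multiple of $e_{n-1}$ to $E_1(t)$ changes no product. Hence $a_{n-1,1}(t)$ may be \emph{redefined} at the inductive step, supplying the missing parameter. The paper organizes the recursion so that the triangular system you describe handles all the other equations first, and then $a_{n-1,1}(t)$ is chosen at the very end to force the $(1,n-1)$-equation. Once you incorporate this, your sketch goes through.
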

\begin{proof}
Given $N\in\nil_n$, we will prove that $\R_n\to N$. Recall that $(e_i)_{i=1}^n$ is our fixed basis of the underlying vector space. Without loss of generality, we may assume that $N\in\nil_n^\gamma$. Indeed, by Lemma~\ref{Nil^gm}, $N$ is isomorphic to an algebra $M$ whose structure constants satisfy~\eqref{g_ij=0-for-k<=max(ij)} in some basis $(f_i)_{i=1}^n$. Take $g\in\operatorname{GL}_n(\Co)$ such that $g(f_i)=e_i$, for all $1\le i\le n$. Then the structure constants of $gM$ in $(e_i)_{i=1}^n$ are those of $M$ in $(f_i)_{i=1}^n$, and thus satisfy~\eqref{g_ij=0-for-k<=max(ij)}. So, we may replace $N$ by $gM$, if necessary.
	 
	 We will thus assume that $N\in\nil_n^\gamma$ and prove by induction on $n$ that there is a parametric basis $E_i(t)=\sum_{j=i}^n a_{ji}(t)e_j$, with $a_{ji}(t)\in\Co(t)$, $1\le i\leq j\le n$, and a choice of structure constants $c_{ij}^k(t)\in\Co(t)$, satisfying the conditions of Definition~\ref{R_n-defn}, with
	\begin{align}\label{range-of-(i_j_k)}
		1\le i\ne j\le n,\ k>\max\{i,j\},\ (i,j,k)\ne(2,1,3),\ (i,j,k)\ne(1,k-1,k),
	\end{align}  
	giving a degeneration of $N$ from $\R_n$. 
	
	The case $n=3$ is proved in \cite{fkkv}. Let $N\in\nil_{n+1}^\gamma$. It follows that $e_{n+1}\in\ann N$, so $\la e_{n+1}\ra$ is an ideal of $N$ and $N/\la e_{n+1}\ra$ is seen as an element of $\nil_n^\gm$ via the identification of $e_i+\la e_{n+1}\ra$ with $e_i$, $1\le i\le n$. By the induction hypothesis, there is a parametric basis $E_i(t)=\sum_{j=i}^n a_{ji}(t)e_j$, $1\le i\le n$, and a choice of parameters $c_{ij}^k(t)$ satisfying the conditions of Definition~\ref{R_n-defn} and determining a degeneration of $N/\la e_{n+1}\ra$ from $\R_n$. Observe that the degeneration does not depend on $a_{n1}(t)$ because $e_n\in\ann R$ for all $R\in\R_n$. More generally, since any such $R$ satisfies~\eqref{g_ij=0-for-k<=max(ij)}, we have
	\begin{align}
		E_i(t)E_j(t)&=\sum_{p=i,q=j}^n a_{pi}(t)a_{qj}(t)e_pe_q
		=\sum_{p=i,q=j}^n a_{pi}(t)a_{qj}(t)\sum_{r>\max\{p,q\}}c_{pq}^r(t)e_r\notag\\
		&=\sum_{r=2}^n\left(\sum_{p=i,q=j}^{r-1}c_{pq}^r(t)a_{pi}(t)a_{qj}(t)\right)e_r.\label{E_iE_j=sum-e_m}
	\end{align}
	We see that $a_{n1}(t)$ cannot appear among the $a_{pi}(t)$ or $a_{qj}(t)$ above. Moreover, each $e_r$ from the sum~\eqref{E_iE_j=sum-e_m} belongs to $\la e_2,\dots,e_n\ra=\la E_2(t),\dots,E_n(t)\ra$, so its coordinates in the basis $(E_i(t))_{i=1}^n$ do not depend on $a_{i1}(t)$, $1\le i\le n$.
	 
	We are going to redefine $a_{n1}(t)$ and choose $a_{n+1,i}(t)$, $1\le i\le n+1$, with $a_{n+1,n+1}(t)\neq 0$, and $c_{ij}^{n+1}(t)$, $1\le i\ne j\le n$, $(i,j)\ne(1,n)$, such that $\tilde E_i(t):=E_i(t)+a_{n+1,i}(t)e_{n+1}$, $1\le i\le n$, $\tilde E_{n+1}(t):=a_{n+1,n+1}(t)e_{n+1}$ is a parametric basis giving a degeneration of $N$ from $\R_{n+1}$. Denote by $A(t)$ the lower triangular matrix $(a_{ij}(t))_{i,j=1}^{n+1}$ whose $(n+1)$-st row consists of unknown parameters which will be defined below and let $A^{-1}(t)=(a'_{ij}(t))_{i,j=1}^{n+1}$ be its formal inverse. Observe that the upper left $(n\times n)$-block of $A^{-1}(t)$ is the inverse of the upper left $(n\times n)$-block of $A(t)$ and thus does not depend on the choice of $a_{n+1,i}(t)$, $1\le i\le n+1$. Since the coordinates of $e_i$ in the basis $(\tilde E_j(t))_{j=1}^{n+1}$ are given by the $i$-th column of $A^{-1}(t)$, for all $1\le i\le n+1$, we can further develop~\eqref{E_iE_j=sum-e_m} to get
\begin{align}
		\tilde E_i(t)\tilde E_j(t)&=\sum_{k=2}^{n+1}\left(\sum_{r=2}^k a'_{kr}(t)\sum_{p=i,q=j}^{r-1}c_{pq}^r(t)a_{pi}(t)a_{qj}(t)\right)\tilde E_k(t),\label{new-str-constants}
\end{align}
for all $1\le i,j\le n+1$. Notice that these new structure constants satisfy~\eqref{g_ij=0-for-k<=max(ij)}.
	
Let $\gm_{ij}^k$ be the structure constants of $N$ in $(e_i)_{i=1}^{n+1}$. Thence, to construct the degeneration $\R_{n+1}\to N$, we need that
	\begin{align}\label{system-deg}
		\lim_{t\to 0}\left(\sum_{r=2}^{k}a'_{kr}(t)\sum_{p=i,q=j}^{r-1}c_{pq}^r(t)a_{pi}(t)a_{qj}(t)\right)=\gm_{ij}^k,\quad 1\le i,j<k\le n+1,
	\end{align}
be satisfied. Observe that~\eqref{system-deg} holds for all $1\le i,j<k\le n$ by the choice of $(E_i(t))_{i=1}^n$ and $(c_{ij}^k(t))$ with~\eqref{range-of-(i_j_k)}, because $\gm_{ij}^k$ is the corresponding structure constant of $N/\la e_{n+1}\ra$ for such $(i,j,k)$. Thus, it remains to consider $k=n+1$, which we do below by appropriately defining $a_{n1}(t)$, $a_{n+1,i}(t)$, $1\le i\le n+1$, and $c_{ij}^{n+1}(t)$, $1\le i\ne j\le n$, $(i,j)\ne(1,n)$ (so that the conditions of Definition~\ref{R_n-defn} hold).

We will proceed in $n$ steps, from $k=0$ to $k=n-1$. At the end of {Step $\mathbf{k}$} we will have defined $c_{p,q}^{n+1}(t)$ for all $p, q\geq n-k$ and $a_{n+1,r}(t)$ for all $r\geq n+1-k$. We will also have established the convergence 
\begin{align}\label{system-deg-k-is-n+1}
		\lim_{t\to 0}\left(\sum_{r=2}^{n+1}a'_{n+1,r}(t)\sum_{p=i,q=j}^{r-1}c_{pq}^r(t)a_{pi}(t)a_{qj}(t)\right)=\gm_{ij}^{n+1}
	\end{align}
for all $n\geq i, j\geq n-k$.
	
	\medskip
	
	\textbf{Step $\mathbf 0$.} Since we must have $c_{nn}^{n+1}(t)=1$, it remains to define $a_{n+1,n+1}(t)$. The left-hand side of~\eqref{system-deg-k-is-n+1} for $i=j=n$ becomes $\lim_{t\to 0}\left(a_{nn}(t)^2a_{n+1,n+1}(t)^{-1}\right)$, so we set
	\begin{align*}
		a_{n+1,n+1}(t):=
		\begin{cases}
			(\gm_{nn}^{n+1})^{-1}a_{nn}(t)^2, & \text{if $\gm_{nn}^{n+1}\ne 0$,}\\
			t^{-1}a_{nn}(t)^2, & \text{if $\gm_{nn}^{n+1}=0$.}
		\end{cases}
	\end{align*}
By definition, \eqref{system-deg-k-is-n+1} holds for $i=j=n$. Notice also that $a_{n1}(t)$ does not occur in the formula above.

\medskip

\textbf{Step $\mathbf{k}$.} Let $1\leq k< n-1$ and assume that {Step $\mathbf{k-1}$} has been successfully completed and that $a_{n1}(t)$ has not been used to define any new coefficients.

Suppose first that $n\geq i>j=n-k$. We will define $c_{ij}^{n+1}(t)$ and establish~\eqref{system-deg-k-is-n+1} in this case. The coefficient of $c_{ij}^{n+1}(t)$ on the left-hand side of~\eqref{system-deg-k-is-n+1} equals $a_{n+1,n+1}(t)^{-1}a_{ii}(t)a_{jj}(t)$ which has already been defined and is non-zero. We thus put
	\begin{align}\label{c_ij^(n+1)=}
		c_{ij}^{n+1}(t):=\frac{a_{n+1,n+1}(t)}{a_{ii}(t)a_{jj}(t)}\left(\gm_{ij}^{n+1}-\sum_{r=2}^{n+1}a'_{n+1,r}(t)\sideset{}{'}\sum_{p=i,q=j}^{r-1}c_{pq}^r(t)a_{pi}(t)a_{qj}(t)\right),
	\end{align}
where the primed sum is over all $(p,q)$ such that $(p,q,r)\ne(i,j,n+1)$. Note that on the right-hand side of~\eqref{c_ij^(n+1)=} we must have $n-k<i\leq r-1$, so $r\geq n-k+2$. Thence, by {Step $\mathbf{k-1}$} and Lemma~\ref{a'_ii-in-terms-of-a_ij}, all the terms of the form $a'_{n+1,r}(t)$ on the right-hand side of~\eqref{c_ij^(n+1)=} have already been defined. The same holds for all remaining terms except those of the form $c_{pj}^{n+1}(t)$ with $p> i$. Thus, \eqref{c_ij^(n+1)=} is a recurrence formula which defines $c_{ij}^{n+1}(t)$ in terms of $c_{pj}^{n+1}(t)$ with $p>i$. So, starting recursively with $c_{nj}^{n+1}(t)$, we can define all of the terms $c_{p,n-k}^{n+1}(t)$, with $p>n-k$ and by doing so we force the convergence~\eqref{system-deg-k-is-n+1} for all $i>n-k$ and $j=n-k$. Similarly, we can define all the terms $c_{n-k,q}^{n+1}(t)$, with $q>n-k$ recursively, making sure that~\eqref{system-deg-k-is-n+1} holds for $i=n-k$ and all $j>n-k$. This will work as before because we are assuming that $k<n-1$ so $(n-k,q)\neq (1,n)$. Moreover, also by that assumption on $k$, the coefficient $a_{n1}(t)$ has not been used in~\eqref{c_ij^(n+1)=} to define $c_{ij}^{n+1}(t)$, as $i, j\geq n-k\geq 2$. Hence, given that $c_{n-k,n-k}^{n+1}(t)=0$, all $c_{p,q}^{n+1}(t)$ with $p, q\geq n-k$ are defined and~\eqref{system-deg-k-is-n+1} holds for all $i, j\geq n-k$, except in the case $i=j=n-k$, which will be analyzed next.

Now we will define $a_{n+1,n+1-k}(t)$ so that~\eqref{system-deg-k-is-n+1} holds for $i=j=n-k$. Assume thus that $i=j=n-k$. Using Lemma~\ref{a'_ii-in-terms-of-a_ij} we have
\begin{align*}
	\sum_{r=2}^{n+1}\sum_{p,q=i}^{r-1}a'_{n+1,r}(t)c_{pq}^r(t)a_{pi}(t)a_{qi}(t)&=a'_{n+1,i+1}(t)a_{ii}(t)^2+\sum_{r=i+2}^{n+1}a'_{n+1,r}(t)\sum_{p,q=i}^{r-1}c_{pq}^r(t)a_{pi}(t)a_{qi}(t)\\
	&=-a_{n+1,n+1}(t)^{-1}a_{i+1,i+1}(t)^{-1}a_{ii}(t)^2a_{n+1,i+1}(t)\\
	&\quad-a_{i+1,i+1}(t)^{-1}a_{ii}(t)^2\sum_{s=i+2}^n a'_{n+1,s}(t)a_{s,i+1}(t)\\
	&\quad+\sum_{r=i+2}^{n+1}a'_{n+1,r}(t)\sum_{p,q=i}^{r-1}c_{pq}^r(t)a_{pi}(t)a_{qi}(t).
	\end{align*}
	Hence, we put
	\begin{align}
	a_{n+1,i+1}(t):=&-\frac{a_{n+1,n+1}(t)a_{i+1,i+1}(t)}{a_{ii}(t)^2}\left(\gm_{ii}^{n+1}-\sum_{r=i+2}^{n+1}a'_{n+1,r}(t)\sum_{p,q=i}^{r-1}c_{pq}^r(t)a_{pi}(t)a_{qi}(t)\right)\notag\\
	&\quad-a_{n+1,n+1}(t)\sum_{s=i+2}^{n}a'_{n+1,s}(t)a_{s,i+1}(t),\label{a_(n+1_n+1)(t)=}
	\end{align}
where the right-hand side defines $a_{n+1,n-k+1}(t)$ in terms of $a_{n+1,r}(t)$ with $r\geq n-k+2$ (already defined in the previous steps) and $c_{p,q}^{n+1}(t)$ with $p, q\geq n-k$ (defined above). Also, \eqref{system-deg-k-is-n+1} holds for $i=j=n-k$ and $a_{n1}(t)$ does not occur in the definition~\eqref{a_(n+1_n+1)(t)=} above. This step is thus finished.

\medskip

\textbf{Step $\mathbf{n-1}$.} When we reach this final step, all $c_{p,q}^{n+1}(t)$ with $p, q\geq 2$ and all $a_{n+1,r}(t)$ with $r\geq 3$ have been defined without using the coefficient $a_{n1}(t)$ and~\eqref{system-deg-k-is-n+1} has been shown to hold for all $i, j\geq 2$. Hence, as $a_{n1}(t)$ also has no role in \eqref{E_iE_j=sum-e_m} nor on the invertibility of $A(t)$, we can redefine it at this point. We will do it so as to guarantee that~\eqref{system-deg-k-is-n+1} holds for $(i,j)=(1,n)$. This is necessary because we are bound to having $c^{n+1}_{1n}(t)=0$, so we cannot force~\eqref{system-deg-k-is-n+1} in case $(i,j)=(1,n)$ by choosing $c^{n+1}_{1n}(t)$ as we please. 

Suppose thus that $(i,j)=(1,n)$. We have 
	\begin{align*}
		\sum_{r=2}^{n+1}\sum_{p=i,q=j}^{r-1}a'_{n+1,r}(t)c_{pq}^r(t)a_{pi}(t)a_{qj}(t)=a_{n+1,n+1}(t)^{-1}a_{nn}(t)\sum_{p=2}^{n}c_{pn}^{n+1}(t)a_{p1}(t),
	\end{align*}
	in which the coefficient of $a_{n1}(t)$ is $a_{n+1,n+1}(t)^{-1}a_{nn}(t)\ne 0$. Hence we set
	\begin{align*}
		a_{n1}(t):=\frac{a_{n+1,n+1}(t)\gm_{1n}^{n+1}}{a_{nn}(t)}-\sum_{p=2}^{n-1}c_{pn}^{n+1}(t)a_{p1}(t),
	\end{align*}
the right-hand side of which has already been defined and does not involve $a_{n1}(t)$. 

Now we can proceed as in the previous (generic) step with $k=n-1$, defining $c_{p1}^{n+1}(t)$ for $p\geq 2$ and then $c_{1q}^{n+1}(t)$ for $n-1\geq q\geq 2$ and finally $a_{n+1,2}(t)$, ensuring that~\eqref{system-deg-k-is-n+1} holds in the remaining cases. The coefficient $a_{n+1,1}(t)$ is unrestrained and can be chosen arbitrarily (which agrees with our previous observations).

This finishes the construction and the proof.
\end{proof}

\section{The geometric classification of commutative nilpotent algebras}\label{SS:commutative nilpotent algebras}

In this section, we consider the variety of commutative nilpotent $n$-dimensional algebras. Our methods will be analogous to those of Section~\ref{SS:nilpotent algebras}.

\begin{definition}\label{S_n-defn}
Let $n\ge 4$. Denote by $\cS_n$ the family of commutative algebras in $\nil^{\gamma}_n$ such that 
$e_i^2=e_{i+1}$ for all $1\le i\le n-1$, $c_{23}^4=1$, $c_{12}^4\neq 0$ and $c_{1i}^{i+1}=0$ for all $2\le i\le n-1$. The remaining structure constants $c_{ij}^{k}$ are arbitrary, subject only to~\eqref{g_ij=0-for-k<=max(ij)} and the commutativity constraint.
\end{definition}

As with the algebras $\R_n$ from Definition~\ref{R_n-defn}, if $\cA\in\cS_{n+1}$, for some $n\ge 4$, then $\cA/\la e_{n+1}\ra$ is seen naturally as an element of $\cS_{n}$, relative to the ordered basis $(e_i+\la e_{n+1}\ra )_{i=1}^{n}$.

\begin{example}\label{Ex:C:S4}
Let $n=4$ and $\alpha\in\Co$. Define the commutative algebra $\cA_\alpha$ by the multiplication
\begin{equation}\label{E:C:S4}
\cA_\alpha:\ e_1^2=e_2,\quad e_1 e_2=\alpha e_4,\quad e_1 e_3=0,\quad e_2^2=e_3,\quad e_2 e_3= e_4,\quad e_3^2=e_4,
\end{equation}
where $e_4\in \ann{\cA_\alpha}$. Then $\cS_4$ consists of the algebras $\cA_\alpha$ with $\alpha\in\Co^*$. Considering the new basis 
\begin{equation*}
f_1=e_1-\alpha e_3,\quad f_2=e_2+\alpha^2 e_4,\quad f_3=e_3\quad\text{and}\quad f_4=e_4,
\end{equation*}
we see that $\cA_\alpha$ is isomorphic to the algebra $\mathcal{C}_{19}(-\alpha)$ defined in \cite{fkkv}. 

Recall that in \cite[Thm.\ 5]{fkkv} it was shown that the family $\mathcal{C}_{19}(\alpha)$, with $\alpha\in\Co$, is generic in the variety of $4$-dimensional nilpotent commutative algebras. Since $\cA_0\in\overline{\bigcup_{\alpha\in\Co^*} O(\cA_\alpha)}$, it follows that the family $\cS_4$ is also generic in the variety of $4$-dimensional commutative nilpotent algebras. 
\end{example}

As we will see next, our restriction in Definition~\ref{S_n-defn} that $c_{12}^4\neq 0$ ensures that the algebras in $\cS_4$ have a sufficiently small automorphism group. 

\begin{lemma}\label{L:S:ann-aut}
Let $\cA\in\cS_{n}$. Then the following hold:
\begin{enumerate}[(a)]
\item $\ann{\cA}=\la e_{n}\ra$.
\item $\aut{\cA}=\{I+xE_{n1}\mid x\in\Co\}$, relative to the basis $(e_i)_{i=1}^n$.
\end{enumerate}
\end{lemma}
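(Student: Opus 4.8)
The plan is to mimic the structure of the proof of Lemma~\ref{L:ann-aut}, proceeding by induction on $n\ge 4$, with Example~\ref{Ex:C:S4} serving as the base case $n=4$. For the base case I would take $\cA=\cA_\alpha$ with $\alpha\in\Co^*$ as in~\eqref{E:C:S4} and compute $\ann{\cA_\alpha}$ and $\aut{\cA_\alpha}$ by direct inspection: since $e_1^2=e_2$, $e_2^2=e_3$, $e_3^2=e_4$ force $e_1,e_2,e_3\notin\ann{\cA_\alpha}$, part (a) is immediate; for part (b), any automorphism $\vf$ preserves $\ann\cA=\la e_4\ra$ and the lower central series, so its matrix is lower triangular, and then the relations $e_i^2=e_{i+1}$ together with $e_1e_2=\alpha e_4$ with $\alpha\ne 0$ successively pin down the diagonal entries to be $1$ and kill all off-diagonal entries except the $(4,1)$-entry, exactly as in the proof of Lemma~\ref{Aut(A)-exactly-I+xE_n1}. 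This is where the hypothesis $c_{12}^4\ne0$ is used, as the text hints.

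For the inductive step, let $\cA\in\cS_{n+1}$ with $n\ge 4$. By~\eqref{g_ij=0-for-k<=max(ij)} we have $e_{n+1}\in\ann\cA$, and $\cA/\la e_{n+1}\ra$ is naturally an element of $\cS_n$ in the basis $(\overline{e_i})_{i=1}^n$. The induction hypothesis gives $\ann{\cA/\la e_{n+1}\ra}=\la\overline{e_n}\ra$ and $\aut(\cA/\la e_{n+1}\ra)=\{I+xE_{n1}\mid x\in\Co\}$. For part (a), if $v=\sum_{i=1}^{n+1}\lambda_ie_i\in\ann\cA$, I may assume $\lambda_{n+1}=0$; reducing mod $\la e_{n+1}\ra$ and using the induction hypothesis forces $\lambda_i=0$ for $i\le n-1$, and then $\lambda_ne_n\in\ann\cA$ together with $e_n^2=e_{n+1}$ gives $\lambda_n=0$ — the exact argument from Lemma~\ref{L:ann-aut}. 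For part (b), I want to invoke Lemma~\ref{Aut(A)-exactly-I+xE_n1}: its hypotheses are that $\ann\cA=\la e_{n+1}\ra$ (just proved), $e_1e_n=0$, $e_i^2=e_{i+1}$ for $1\le i\le n$ (all part of the definition of $\cS_{n+1}$, noting $c_{1n}^{n+1}=0$ since $c_{1i}^{i+1}=0$), and $\aut(\cA/\la e_{n+1}\ra)=\{I+xE_{n1}\mid x\in\Co\}$ (the induction hypothesis). Its conclusion is precisely $\aut\cA=\{I+xE_{n+1,1}\mid x\in\Co\}$, completing the induction.

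The main point to be careful about is the base case: verifying that $\aut{\cA_\alpha}$ is exactly the one-parameter group when $\alpha\ne0$, and seeing clearly where $\alpha=0$ would fail (in that case $e_1$ could be rescaled more freely relative to $e_2$, enlarging the automorphism group and destroying part (b)). Everything else is a transcription of the nilpotent case, since Lemma~\ref{Aut(A)-exactly-I+xE_n1} is stated for arbitrary algebras in $\nil^\gamma_{n+1}$ and applies verbatim to commutative ones; one only needs to observe that the relations $e_1e_n=0$ and $e_i^2=e_{i+1}$ demanded by that lemma are guaranteed by Definition~\ref{S_n-defn} (the first because $c_{1n}^{n+1}=0$ is forced, as $n\ge 4$ means $n\ge 2$, so $(1,n)$ is covered by the condition $c_{1i}^{i+1}=0$).
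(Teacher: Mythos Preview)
Your proposal is correct and follows essentially the same inductive architecture as the paper's proof: verify the base case $n=4$ directly, then invoke Lemma~\ref{Aut(A)-exactly-I+xE_n1} for the inductive step, exactly as in Lemma~\ref{L:ann-aut}. The only minor difference is in how you set up the base case for part (b): the paper quotients by $\la e_4\ra$, identifies $\cA_\alpha/\la e_4\ra$ with the algebra $\mathcal C_{02}$ from \cite{fkkv}, and imports the known form of $\aut{\mathcal C_{02}}$ before using the relations of $\cA_\alpha$ to finish, whereas you obtain lower-triangularity of the automorphism matrix from preservation of the lower central series and then compute directly. Both routes lead to the same elementary computation; yours is slightly more self-contained, the paper's slightly more modular. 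One small caution: in carrying out your base-case computation you will also need the relations $e_1e_3=0$ and $e_2e_3=e_4$ (not just $e_i^2=e_{i+1}$ and $e_1e_2=\alpha e_4$) to force $a_{21}=a_{31}=0$ and fix the diagonal, so be sure to include them when you write it out in full.
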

\begin{proof}
The proof is essentially the same as that of Lemma~\ref{L:ann-aut}, since Lemmas~\ref{I+xE_1_n-auto} and~\ref{Aut(A)-exactly-I+xE_n1} will still apply to the algebras in $\cS_n$. We just need to verify the base cases for (a) and (b). Assume thus that $n=4$. Then $\cS_4$ is described in Example~\ref{Ex:C:S4}; more specifically, it consists of the algebras $\cA_\alpha$ with $\alpha\neq 0$ and multiplication given by \eqref{E:C:S4}, commutativity and the fact that $e_4\in \ann{\cA_\alpha}$. 

Let $v=\sum_{i=1}^4 \lambda_i e_i\in\ann{\cA_\alpha}$. As $e_4\in \ann{\cA_\alpha}$, we can assume that $\lambda_4=0$. Then 
\begin{align*}
&0=ve_1=\lambda_1e_2+\lambda_2\alpha e_4, \quad\text{so $\lambda_1=0$;}\\
&0=ve_2=\lambda_2e_3+\lambda_3e_4, \quad\text{so $\lambda_2=\lambda_3=0$.}
\end{align*}
So indeed $\ann{\cA_\alpha}=\la e_{4}\ra$, for every $\alpha$.

Now, for (b), Lemma~\ref{I+xE_1_n-auto} guarantees that $\aut {\cA_\alpha}\supseteq\{I+xE_{n1}\mid x\in\Co\}$. Conversely, let $\varphi\in\aut {\cA_\alpha}$, with matrix $\phi=\left(a_{ij}\right)_{1\leq i,j\leq 4}$ relative to the ordered basis $(e_i)_{i=1}^4$. Since $\ann{\cA_\alpha}=\la e_{4}\ra$, it follows that $\varphi(e_4)=ze_4$, for some $z\in\Co^*$. So $\varphi$ induces an automorphism $\overline\varphi:\cA_\alpha/\la e_{4}\ra\longrightarrow \cA_\alpha/\la e_{4}\ra$ and, relative to the basis $\overline{e_i}=e_i+\la e_{4}\ra$, $i=1, 2, 3$, the nonzero products among basis vectors are just $\overline{e_i}^2=\overline{e_{i+1}}$, for $i=1, 2$. It is easy to see (cf.\ \cite[3.1.1]{fkkv}, where this $3$-dimensional algebra is denoted by $\mathcal{C}_{02}$) that the matrix of $\overline\varphi$ is of the form 
\begin{equation*}
\begin{pmatrix}
x&0&0\\ 0&x^2&0\\y&0&x^4 
\end{pmatrix},
\end{equation*}
for $x, y\in\Co$ with $x\neq 0$. Thus, we conclude that 
\begin{equation*}
\phi=
\begin{pmatrix}
x&0&0&0\\ 0&x^2&0&0\\y&0&x^4 &0\\
a_{41}&a_{42}&a_{43}&z
\end{pmatrix}.
\end{equation*}

Applying $\varphi$ to the relation $e_1e_3=0$ yields
\begin{equation*}
0=\varphi(e_1)\varphi(e_3)=(xe_1+ye_3+a_{41}e_4)(x^4e_3+a_{43}e_4)= yx^4e_4,
\end{equation*}
so $y=0$. Similarly, using $e_1^2=e_2$ we deduce that $a_{42}=0$; then $e_2^2=e_3$ gives $a_{43}=0$ and by $e_2e_3=e_4$ we get $z=x^6$. So it remains to show that $x=1$, which we do by using $e_3^2=e_4$ and $e_1e_2=\alpha e_4$, with $\alpha\neq0$. The former relation implies that $x^8=z=x^6$, so $x^2=1$, and then  
\begin{equation*}
\alpha e_4=\alpha x^6 e_4=\varphi(\alpha e_4)=(xe_1+a_{41}e_4)(x^2e_2)=x^3\alpha e_4=x\alpha e_4.
\end{equation*}
As $\alpha\neq 0$, we can deduce from the above that $x=1$.
\end{proof}

\begin{remark}
The proof of Lemma~\ref{L:S:ann-aut} shows also that, although $\ann{\cA_0}=\la e_{4}\ra$, the automorphism group of $\cA_0$ is slightly larger: $\aut{\cA_0}=\left\{ \left(\begin{smallmatrix}
\pm1&0&0&0\\ 0&1&0&0\\0&0&1 &0\\
x&0&0&1
\end{smallmatrix}\right)\mid x\in\Co\right\}$.
\end{remark}

Since we are now working in a variety of commutative algebras, we need to slightly adapt the method described in Subsection~\ref{SS:ceSS}. Specifically, for a commutative $n$-dimensional algebra ${\cA}$, let ${\rm Z}^2_{\mathcal C}\left( {\cA},\Co\right)$ 
be the subspace of ${\rm Z}^{2}\left( {\cA},\Co\right)$ consisting of the $2$-cocycles $\theta :{\cA}\times {\cA}\longrightarrow \Co$ such that $\theta(x,y)=\theta(y,x)$, for all $x, y\in\cA$. Then ${\rm B^2}(\cA,\Co)\subseteq {\rm Z}^2_{\mathcal C}\left( {\cA},\Co\right)$ and we set 
${\rm H}_\mathcal{C}^2(\cA,\Co) = {\rm Z}^2_{\mathcal C}\left( {\cA},\Co\right) \big/{\rm B^2}(\cA,\Co)$. This is a subspace of ${\rm H}^2(\cA,\Co)$ and $\cA_\theta$ is commutative if and only if 
$\theta\in{\rm Z}^2_{\mathcal C}\left( {\cA},\Co\right)$. We define $\Delta^c_{ij}=\Delta_{ij}+\Delta_{ji}$ and $\Delta^c_{ii}=\Delta_{ii}$, for $1\leq i\neq j\leq n$, so that $\left\{\Delta^c_{ij}\mid 1\leq i\leq j\leq n\right\}$ is a basis of ${\rm Z}_\mathcal{C}^2(\cA,\Co)$.

We have the following analogue of Lemma~\ref{constructing-R_n+1}.

\begin{lemma}\label{constructing-S_n+1}
Let $n\geq 4$ and $\cA\in\nil^{\gamma}_n$ be commutative such that $\ann\cA=\la e_{n}\ra$ and $e_i^2=e_{i+1}$ for all $1\le i\le n-1$. Suppose that $\aut(\cA)=\{I+xE_{n1}\mid x\in\Co\}$ in the basis $(e_i)_{i=1}^n$. Then there is a parametric family of pairwise non-isomorphic commutative $1$-dimensional central extensions $\cB$ of $\cA$ with basis $(e_i)_{i=1}^{n+1}$, extending the basis of $\cA$, such that $\ann\cB=\la e_{n+1}\ra$, $e_1e_n=0$, $e_i^2=e_{i+1}$ for all $1\leq i\leq n$, and the structure constants $c_{ij}^{n+1}$ of $\cB$ in this basis are arbitrary independent complex parameters for all $1\le i< j\le n$, $(i,j)\ne (1,n)$. 
\end{lemma}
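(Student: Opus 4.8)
The plan is to mimic the proof of Lemma~\ref{constructing-R_n+1}, working now in the commutative cohomology $\mathrm{H}^2_{\mathcal C}(\cA,\Co)$ with the basis $\{\Dl ij\mid 1\le i\le j\le n\}$ (using $\Delta^c_{ij}$ in place of $\Dt ij$, which I will continue to denote $[\Delta^c_{ij}]$ for the classes). First I would compute the space of commutative $2$-coboundaries. Since $e_i^2=e_{i+1}$ for $1\le i\le n-1$ and $\cA\in\nil^\gamma_n$ is commutative, $\delta e_m^*$ ($2\le m\le n$) picks out $e_{m-1}^2=e_m$ together with the products $e_ie_j=e_je_i$ ($i<j$, $i+j\le\dots$, $\max\{i,j\}<m$) whose coefficient in $e_m$ is $c_{ij}^m$; thus
\begin{align*}
\mathrm{B}^2(\cA,\Co)=\la\Delta^c_{11}\ra\oplus\bigoplus_{m=2}^{n-1}\La\Delta^c_{mm}+\sum_{1\le i< j\le m}c_{ij}^{m+1}\Delta^c_{ij}\Ra,
\end{align*}
so that $\mathrm{H}^2_{\mathcal C}(\cA,\Co)=\la[\Delta^c_{ij}]\mid 1\le i< j\le n\ra\oplus\la[\Delta^c_{nn}]\ra$, exactly the commutative analogue of what appears in the proof of Lemma~\ref{constructing-R_n+1}.

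Next I would run the $\aut(\cA)=\{I+xE_{n1}\}$-action on a representative $\0=\sum_{1\le i<j\le n}\af_{ij}[\Delta^c_{ij}]+\af_{nn}[\Delta^c_{nn}]$. Writing $A$ for the symmetric matrix of $\0$, the conjugation $\phi^TA\phi$ with $\phi=I+xE_{n1}$ is computed just as in~\eqref{phi^T.A.phi}: since $E_{1n}AE_{n1}=\af_{nn}E_{11}$ and the cross terms contribute $x\sum_i\af_{ni}E_{1i}+x\sum_i\af_{in}E_{i1}$, one gets $\af^*_{1i}=\af_{1i}+x\af_{ni}$ for $2\le i\le n$, $\af^*_{ij}=\af_{ij}$ for $2\le i<j\le n$, and $\af^*_{nn}=\af_{nn}$; the only genuinely affected off-diagonal entries are those in the first row (equivalently first column, by symmetry), and $\af^*_{1n}=\af_{1n}+x\af_{nn}$. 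Hence, on the open subset where $\af_{nn}\ne 0$, choosing $x=-\af_{1n}\af_{nn}^{-1}$ gives the normal-form representatives
\begin{align*}
\La\sum_{1\le i<j\le n,\ (i,j)\ne(1,n)}c_{ij}^{n+1}[\Delta^c_{ij}]+[\Delta^c_{nn}]\Ra_{c_{ij}^{n+1}\in\Co},
\end{align*}
which are pairwise in distinct $\aut(\cA)$-orbits, and by Lemma~\ref{Aut(A)-exactly-I+xE_n1} (applied in the commutative setting, which is legitimate since that lemma's hypotheses and proof only use the relations $e_i^2=e_{i+1}$, $e_1e_n=0$, $\ann\cA=\la e_{n+1}\ra$ that persist here) the corresponding central extensions $\cB$ are pairwise non-isomorphic. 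The cocycle $\sum c_{ij}^{n+1}[\Delta^c_{ij}]+[\Delta^c_{nn}]$ yields precisely the multiplication table $e_i^2=e_{i+1}$ for $1\le i\le n$ (the $[\Delta^c_{nn}]$ term giving $e_n^2=e_{n+1}$), $e_1e_n=0$ (since $(1,n)$ is excluded), and $e_ie_j=c_{ij}^{n+1}e_{n+1}$ for the remaining pairs; by Lemma~\ref{I+xE_1_n-auto} and the first paragraph these constants are free, and $\ann\cB=\la e_{n+1}\ra$ because the cocycle is nondegenerate on $\cA/\ann\cA$.

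The main obstacle — modest, but worth checking carefully — is the bookkeeping in the first paragraph: unlike the general (not necessarily commutative) case, a single coboundary $\delta e_{m+1}^*$ now absorbs both $c_{ij}^{m+1}$ and $c_{ji}^{m+1}$ simultaneously (they are equal), so one must confirm that no commutativity-forced relation collapses the claimed direct sum decomposition of $\mathrm{B}^2(\cA,\Co)$ or accidentally kills one of the classes $[\Delta^c_{ij}]$ with $i<j\le n$; and one must double-check that the excluded pair is $(1,n)$ and not, say, also $(1,n-1)$ — the computation of $\phi^TA\phi$ above shows only the $\af_{1n}$ entry is movable (the entries $\af_{1i}$ for $i<n$ get shifted by $x\af_{ni}=0$ since $\af_{ni}=0$ for $i<n$ in the relevant range), so indeed exactly one parameter, $c_{1n}^{n+1}$, is normalized away and all others, for $1\le i<j\le n$ with $(i,j)\ne(1,n)$, remain free independent parameters. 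Everything else is the commutative transcription of Lemma~\ref{constructing-R_n+1}.
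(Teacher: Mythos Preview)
Your approach is exactly the paper's: they simply say the proof is identical to that of Lemma~\ref{constructing-R_n+1}, replacing $\Delta_{ij}$ by $\Delta^c_{ij}$ and $i\ne j$ by $i<j$, and record that ${\rm H}_{\mathcal C}^2(\cA)=\la[\Delta^c_{ij}]\mid 1\le i< j\le n\ra\oplus\la[\Delta^c_{nn}]\ra$.

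One slip in your final paragraph should be corrected. You write that ``$\af_{ni}=0$ for $i<n$ in the relevant range'', so that only $\af_{1n}$ moves under $\phi=I+xE_{n1}$. That is false: the $(n,i)$-entry of the symmetric matrix $A$ is $\af_{in}$, a free parameter, so $\af^*_{1i}=\af_{1i}+x\af_{in}$ genuinely changes for $2\le i<n$ as well. The reason the normal forms are nonetheless in distinct orbits is not that the other $\af_{1i}$ are invariant, but that once $\af_{nn}=1$ is fixed the equation $\af^*_{1n}=\af_{1n}+x$ forces $x=0$ whenever both source and target have $\af_{1n}=0$; with $x=0$ all coefficients agree. (Scaling is ruled out the same way: any scalar preserving the coefficient $1$ on $[\Delta^c_{nn}]$ must be $1$.) Also, the pairwise non-isomorphism of the extensions is a consequence of the cohomological classification (Lemma~1 in the paper), not of Lemma~\ref{Aut(A)-exactly-I+xE_n1}, which only computes $\aut\cB$.
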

\begin{proof}
The proof is identical to that of Lemma~\ref{constructing-R_n+1}, essentially replacing $\Delta_{ij}$ by $\Delta^c_{ij}$ and $i\neq j$ by $i<j$. For example, $H_{\mathcal C}^2(\cA)=\la\left[\Delta_{ij}^c\right]\mid 1\le i< j\le n\ra\oplus\la\left[\Delta_{nn}^c\right]\ra$.
\end{proof}

\begin{proposition}
Let $n\geq 4$. Then $\dim\left(\overline{\bigcup_{\cA\in\cS_n}O(\cA)}\right)=\frac{n(n-1)(n+4)}{6}$. 
\end{proposition}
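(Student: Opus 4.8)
The plan is to mirror exactly the computation carried out for $\R_n$ in Proposition~\ref{P:dim:nilp:Rn}, replacing the non-commutative bookkeeping by its commutative analogue. First I would invoke Lemma~\ref{L:S:ann-aut}, which tells us that for any $\cA\in\cS_n$ one has $\dim\aut\cA=1$, so that $\dim O(\cA)=\dim\operatorname{GL}_n(\Co)-1=n^2-1$. Next, Lemma~\ref{constructing-S_n+1} guarantees that the algebras in $\cS_n$ are pairwise non-isomorphic, so their orbits are disjoint, and consequently $\dim\left(\overline{\bigcup_{\cA\in\cS_n}O(\cA)}\right)=n^2-1+\dim\cS_n$, where $\dim\cS_n$ denotes the number of free parameters defining the family.

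The core of the argument is therefore the count of $\dim\cS_n$, which I would do by induction on $n$, using the observation (recorded after Definition~\ref{S_n-defn}) that $\cA\mapsto\cA/\la e_{n+1}\ra$ identifies the passage from $\cS_{n+1}$ to $\cS_n$. For the base case $n=4$, Example~\ref{Ex:C:S4} shows $\cS_4=\{\cA_\alpha\mid\alpha\in\Co^*\}$, a one-parameter family, so $\dim\cS_4=1$. For the inductive step, going from $\cS_n$ to $\cS_{n+1}$ adjoins exactly the new structure constants $c_{ij}^{n+1}$ with $1\le i<j\le n$; by Definition~\ref{S_n-defn} all of these are free except for the single relation $c_{1,n}^{n+1}=0$ (note that $e_{n}^2=e_{n+1}$ and $c_{23}^4=1$ only constrain constants already present in $\cS_n$, not new ones at level $n+1$, once $n\ge 4$). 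Hence $\dim\cS_{n+1}=\dim\cS_n+\binom{n}{2}-1=\dim\cS_n+\frac{n(n-1)}{2}-1$. Solving this recurrence with $\dim\cS_4=1$ gives $\dim\cS_n=\sum_{m=4}^{n-1}\left(\frac{m(m-1)}{2}-1\right)+1$; a routine simplification of this sum yields a closed form which, when added to $n^2-1$, produces $\frac{n(n-1)(n+4)}{6}$.

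The only real subtlety — and the step I would be most careful about — is the accounting of which new constants are genuinely free in the step $\cS_n\rightsquigarrow\cS_{n+1}$: one must check that the defining relations of $\cS_{n+1}$ (namely $e_i^2=e_{i+1}$, $c_{23}^4=1$, $c_{12}^4\ne 0$, and $c_{1i}^{i+1}=0$ for $2\le i\le n$) impose precisely one constraint, $c_{1,n}^{n+1}=0$, among the $\binom{n}{2}$ candidate new parameters $\{c_{ij}^{n+1}\mid 1\le i<j\le n\}$, while $c_{n,n}^{n+1}=1$ is the diagonal constant $e_n^2=e_{n+1}$ and is not counted among the $i<j$ off-diagonal parameters. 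Everything else is a direct transcription of the proof of Proposition~\ref{P:dim:nilp:Rn}. I would then close by verifying the resulting arithmetic: with $\dim O(\cA)=n^2-1$ and the above formula for $\dim\cS_n$, one checks $n^2-1+\dim\cS_n=\frac{n(n-1)(n+4)}{6}$ for $n=4$ (both sides equal $16$) and that both sides satisfy the same recurrence in $n$, completing the proof.
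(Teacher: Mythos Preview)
Your approach is essentially the paper's own: compute $\dim O(\cA)=n^2-1$ from $\dim\aut\cA=1$, establish the recurrence $q_{n+1}=q_n+\binom{n}{2}-1$ with $q_4=1$, and add. One small inaccuracy: you assert that Lemma~\ref{constructing-S_n+1} guarantees the algebras in $\cS_n$ are \emph{pairwise non-isomorphic}, but that lemma only shows the central extensions of a \emph{fixed} $\cA$ are pairwise non-isomorphic; at the base $n=4$ one actually has $\cA_\alpha\simeq\cA_{-\alpha}$ (as the paper notes, citing \cite[3.1.3]{fkkv}), so the orbits are not literally disjoint. The paper phrases this more carefully by saying the \emph{isomorphism classes} in $\cS_4$ form a $1$-parameter family. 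This does not affect your dimension count, since a finite-to-one parametrization preserves dimension, but you should adjust the wording accordingly.
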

\begin{proof}
This proof is just an adaptation of the proof of Proposition~\ref{P:dim:nilp:Rn}. Since $\dim\aut \cA=1$, for every $\cA\in\cS_n$, we have $\dim O(\cA)=n^2-1$. Moreover, we have observed in Example~\ref{Ex:C:S4} that $\cS_4=\left\{\cA_\alpha\mid \alpha\neq 0\right\}$ and, by \cite[3.1.3]{fkkv}, $\cA_\alpha\simeq \cA_{\alpha'}$ if and only if $\alpha'=\pm\alpha$. Thus, the isomorphism classes in $\cS_4$ form a $1$-parameter family and the isomorphism classes in $\cS_n$ are obtained by iterated $1$-dimensional central extensions of this family, as shown in Lemma~\ref{constructing-S_n+1}.

Let $q_n$ be the number of independent parameters of the family $\cS_n$. We have $q_4=1$ and $q_{n+1}=q_n+\frac{n(n-1)}{2}-1$, for all $n\ge 4$. Therefore, $q_n=\frac{n(n+1)(n-4)}{6}+1$. Thus, $\dim\left(\overline{\bigcup_{\cA\in\cS_n}O(\cA)}\right)=n^2-1+q_n=\frac{n(n-1)(n+4)}{6}$.
\end{proof}

\begin{theorem}\label{Sn:generic}
For any $n\ge 4$, the family $\cS_n$ is generic in the variety of all  $n$-dimensional commutative nilpotent algebras. In particular, that variety has dimension $\frac{n(n-1)(n+4)}{6}$.
\end{theorem}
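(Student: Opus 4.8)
The plan is to mimic closely the proof of Theorem~\ref{T:Rn:generic}, but working throughout in the commutative setting. Given a commutative nilpotent algebra $N$ of dimension $n$, I want to show $\cS_n\to N$. As before, by Lemma~\ref{Nil^gm} (applied so that the chosen basis witnessing~\eqref{g_ij=0-for-k<=max(ij)} can be taken compatible with commutativity) I may assume $N\in\nil_n^\gamma$ and is commutative. The base case is $n=4$, which is handled by Example~\ref{Ex:C:S4}: there $\cS_4=\{\cA_\alpha\mid\alpha\in\Co^*\}$ and it was observed that $\cS_4$ is generic in the variety of $4$-dimensional commutative nilpotent algebras via the identification with the family $\mathcal{C}_{19}(\alpha)$ of \cite{fkkv}. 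For the dimension statement, the previous proposition already gives $\dim\left(\overline{\bigcup_{\cA\in\cS_n}O(\cA)}\right)=\frac{n(n-1)(n+4)}{6}$, so once genericity is established the formula for $\dim$ of the variety follows immediately.

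For the inductive step, I would take $N\in\nil_{n+1}^\gamma$ commutative, pass to $N/\la e_{n+1}\ra\in\nil_n^\gamma$ (commutative), and apply the induction hypothesis to obtain a parametric basis $E_i(t)=\sum_{j=i}^n a_{ji}(t)e_j$, $1\le i\le n$, and structure constants $c_{ij}^k(t)$ satisfying the conditions of Definition~\ref{S_n-defn} and with the indexing ranging over $1\le i<j\le n$, $k>j$, excluding $(i,j,k)=(2,3,4)$ and $(i,j,k)=(1,k-1,k)$, realizing a degeneration of $N/\la e_{n+1}\ra$ from $\cS_n$. Then, exactly as in the proof of Theorem~\ref{T:Rn:generic}, I would set $\tilde E_i(t)=E_i(t)+a_{n+1,i}(t)e_{n+1}$, $1\le i\le n$, and $\tilde E_{n+1}(t)=a_{n+1,n+1}(t)e_{n+1}$, and choose the new parameters $a_{n+1,i}(t)$ ($1\le i\le n+1$, with $a_{n+1,n+1}(t)\ne 0$), a redefinition of $a_{n1}(t)$, and the new structure constants $c_{ij}^{n+1}(t)$ for $1\le i<j\le n$, $(i,j)\ne(1,n)$, so that the limit relations~\eqref{system-deg} hold for $k=n+1$. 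The step-by-step recursive procedure (Step $\mathbf 0$ defining $a_{n+1,n+1}(t)$ from $\gm_{nn}^{n+1}$ and $a_{nn}(t)$; generic Step $\mathbf k$ defining $c_{p,q}^{n+1}(t)$ for $p,q\ge n-k$ and then $a_{n+1,n-k+1}(t)$; final Step $\mathbf{n-1}$ redefining $a_{n1}(t)$ to force convergence in the forbidden slot $(i,j)=(1,n)$) carries over verbatim, the only change being that the index pairs $(i,j)$ now run over $i<j$ rather than $i\ne j$, so there is genuinely one parameter per unordered pair, matching the count $\frac{n(n-1)}{2}-1$ of Lemma~\ref{constructing-S_n+1}. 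Lemma~\ref{a'_ii-in-terms-of-a_ij} on inverses of lower triangular matrices is used identically.

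The one place requiring care — and the main obstacle — is verifying that the commutativity constraint is consistently preserved at every stage, i.e.\ that the new structure constants $c_{ij}^{n+1}(t)$ and $c_{ji}^{n+1}(t)$ coming out of the recursion~\eqref{c_ij^(n+1)=} agree, and that the resulting degenerate algebras genuinely lie in $\cS_{n+1}$ (in particular that $c_{23}^4(t)=1$ and $c_{12}^4(t)\ne 0$ can always be arranged). Since the target $N$ is commutative, $\gm_{ij}^{n+1}=\gm_{ji}^{n+1}$, and since $E_i(t)\tilde E_j(t)$ computations are symmetric in $i,j$ when the $c_{pq}^r(t)$ already defined are symmetric, one checks by an induction-within-the-induction that the formula~\eqref{c_ij^(n+1)=} produces the same value whether one solves for $c_{ij}^{n+1}(t)$ or $c_{ji}^{n+1}(t)$; concretely, it suffices to organize the recursion so that only the entries with $i<j$ are ever defined, and to read off $c_{ji}^{n+1}(t):=c_{ij}^{n+1}(t)$. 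The conditions $c_{23}^4(t)=1$ and $c_{12}^4(t)\ne 0$ are secured precisely the way the analogous conditions $c_{21}^3(t)=1$ were in the nilpotent case, using that $e_2^2=e_3$ (so $c_{22}^3=1$, forcing, after the coordinate change, the needed normalization) together with the base-case analysis of $\cS_4$ in Example~\ref{Ex:C:S4}. Once this bookkeeping is done, the same reasoning as in Theorem~\ref{T:Rn:generic} shows $\cS_n\to N$, completing the proof.
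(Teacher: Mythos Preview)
Your proposal is correct and follows essentially the same approach as the paper: the paper's proof is even terser, simply declaring the argument identical to that of Theorem~\ref{T:Rn:generic}, with the base case $n=4$ supplied by Example~\ref{Ex:C:S4}, and noting that symmetry of the $c_{ij}^r$ and $\gamma_{ij}^r$ makes~\eqref{system-deg-k-is-n+1} hold for $(i,j)$ iff it holds for $(j,i)$, which is exactly your symmetry observation. Your concern about securing $c_{23}^4(t)=1$ and $c_{12}^4(t)\ne 0$ is unnecessary, since these are conditions at levels $\le n$ and are inherited automatically from the induction hypothesis on $\cS_n$.
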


\begin{proof}
The proof is identical to the proof of Theorem~\ref{T:Rn:generic}, the homologous result for the variety of all $n$-dimensional nilpotent algebras. Indeed, the base step for $n=4$ is given in Example~\ref{Ex:C:S4} and for the inductive step we just need to observe that if $c_{ij}^{r}=c_{ji}^{r}$ and $\gamma_{ij}^r=\gamma_{ji}^r$ for all $1\leq i,j, r\leq n+1$, then \eqref{system-deg-k-is-n+1} holds for the pair $(i,j)$ if and only if it holds for $(j,i)$.
\end{proof}

\section{The geometric classification of anticommutative nilpotent algebras}\label{SS:anticommutative nilpotent algebras}

In this section, we consider the variety of anticommutative nilpotent $n$-dimensional algebras. Our methods will be analogous to those of Sections~\ref{SS:nilpotent algebras} and~\ref{SS:commutative nilpotent algebras} but there will be some additional technical difficulties coming from larger automorphism groups in lower dimensions.

To shorten the coming statements, we make the following auxiliary definition.

\begin{definition}\label{Tprime_n-defn}
Let $n\ge 3$. Denote by $\cT'_n$ the family of anticommutative algebras in $\nil_n^\gamma$ such that $e_i e_{i+1}=e_{i+2}$ for all $1\le i\le n-2$. The remaining structure constants $c_{ij}^{k}$ are arbitrary, subject only to~\eqref{g_ij=0-for-k<=max(ij)} and the anticommutativity constraint.
\end{definition}

\begin{lemma}\label{L:T:ann-aut}
Let $\cA\in\cT'_n$, with $n\geq 3$. Then the following hold:
\begin{enumerate}[(a)]
\item $\ann \cA=\la e_{n}\ra$.
\item For all $\alpha, \beta\in\Co$, the linear map $\vf(e_1)=e_1+\alpha e_n$, $\vf(e_2)=e_2+\beta e_n$, $\vf(e_i)=e_i$, $3\le i\le n$, is an automorphism of $\cA$.
\end{enumerate}
\end{lemma}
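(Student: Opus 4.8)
The strategy is to mimic the proofs of Lemmas~\ref{L:ann-aut} and~\ref{L:S:ann-aut} as closely as possible, but since here we are only claiming a lower bound on $\aut\cA$ in part (b) (not an exact description), the argument is actually more elementary and does not require an induction. For part (a), the inclusion $\la e_n\ra\subseteq\ann\cA$ is immediate from~\eqref{g_ij=0-for-k<=max(ij)}, since $e_n$ cannot appear as a left or right factor producing anything nonzero. For the reverse inclusion, take $v=\sum_{i=1}^n\lambda_i e_i\in\ann\cA$; as usual we may assume $\lambda_n=0$. Then I would use the relations $e_ie_{i+1}=e_{i+2}$ together with anticommutativity to peel off the coefficients one at a time: computing $v e_{i+1}$ and looking at the appropriate coordinate will force $\lambda_i=0$ successively. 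Concretely, $0=ve_2$ has $e_3$-coordinate $\lambda_1$ (the only product among $e_1,\dots,e_n$ landing in $e_3$ from multiplying on the right by $e_2$, after using~\eqref{g_ij=0-for-k<=max(ij)}, is $e_1e_2$), giving $\lambda_1=0$; then $0=ve_3$ gives $\lambda_2=0$ via the $e_4$-coordinate; and in general $0=ve_{i+1}$ forces $\lambda_i=0$ for $1\le i\le n-2$, and finally $\lambda_{n-1}=0$ follows from, e.g., $e_{n-1}e_{n-2}=-e_n$. Some care is needed to check that the other nonzero structure constants $c_{pq}^r$ do not interfere, but the key point is that the relation $e_ie_{i+1}=e_{i+2}$ pins down one new coordinate at each stage regardless of the other terms, exactly as in Lemma~\ref{L:S:ann-aut}.

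For part (b), the verification that $\vf$ is an endomorphism is a direct computation using~\eqref{g_ij=0-for-k<=max(ij)} and the fact, just established, that $e_n\in\ann\cA$. For any basis vectors $e_i,e_j$ with $i,j\ge 3$ we have $\vf(e_i)\vf(e_j)=e_ie_j=\vf(e_ie_j)$ trivially. For $\vf(e_1)\vf(e_j)$ with $j\ge 3$: expanding, $(e_1+\alpha e_n)e_j=e_1e_j$ since $e_n\in\ann\cA$; and $e_1e_j$ is already a combination of $e_k$ with $k>\max\{1,j\}$, on which $\vf$ acts as the identity, so $\vf(e_1e_j)=e_1e_j$ as well. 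The same reasoning handles $\vf(e_j)\vf(e_1)$, $\vf(e_2)\vf(e_j)$, $\vf(e_j)\vf(e_2)$, and $\vf(e_1)\vf(e_2)$, in each case the cross terms vanishing because $e_n$ annihilates and because the image $e_1e_2=e_3$ (or more generally $e_1e_2$) is fixed by $\vf$ — here one uses that $e_3$ in particular is a basis vector with index $\ge 3$, hence fixed. Anticommutativity is preserved automatically since $\vf$ is linear and the defining relations are anticommutative. Finally $\vf$ is invertible, with inverse $e_1\mapsto e_1-\alpha e_n$, $e_2\mapsto e_2-\beta e_n$, $e_i\mapsto e_i$ for $3\le i\le n$; indeed this is just Lemma~\ref{I+xE_1_n-auto} applied twice (first to the index pair $(1,n)$, then to $(2,n)$), since composing two such elementary automorphisms gives exactly $\vf$.

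The main (and only real) obstacle is the bookkeeping in part (a): one must be sure that when computing $ve_{i+1}$, no other product $e_pe_q$ with $\{p,q\}\ne\{i,i+1\}$ contributes to the $e_{i+2}$-coordinate in a way that could cancel the contribution $\lambda_i e_{i+2}$ from $e_ie_{i+1}$. This is controlled by~\eqref{g_ij=0-for-k<=max(ij)}: a product $e_pe_q$ can only land in $e_{i+2}$ if $\max\{p,q\}<i+2$, i.e.\ $p,q\le i+1$; combined with $q=i+1$ (multiplying on the right by $e_{i+1}$) this forces $p\le i+1$, and the terms with $p=i+1$ vanish by anticommutativity ($e_{i+1}^2=0$), leaving only $p\le i$. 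The troublesome possibility is a term $\lambda_p c_{p,i+1}^{i+2}$ with $p<i$, but at the stage where we examine $ve_{i+1}$ we have already shown $\lambda_p=0$ for all $p<i$, so these drop out and we are left with $0=\lambda_i c_{i,i+1}^{i+2}=\lambda_i$, as $c_{i,i+1}^{i+2}=1$. Thus the induction on $i$ within part (a) closes cleanly, and part (a) in turn is what makes the computation in part (b) go through.
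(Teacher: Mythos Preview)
Your proposal is correct. The paper's own proof is a two-line reference: part (a) ``follows easily by induction, as in the proof of Lemma~\ref{L:ann-aut}'' (i.e.\ induction on $n$ via the quotient $\cA/\la e_n\ra$), and part (b) ``follows just as in the proof of Lemma~\ref{I+xE_1_n-auto}, using the anticommutativity of $\cA$.'' Your argument for (b) is exactly that verification, and your argument for (a) is a direct coefficient-peeling within a fixed $n$ (an internal induction on the index $i$) rather than the paper's induction on $n$; unwound, the two arguments coincide, so this is a minor repackaging rather than a genuinely different route. One small remark: your closing appeal to ``Lemma~\ref{I+xE_1_n-auto} applied twice'' is a slight overstatement, since that lemma as stated only covers the index pair $(1,n)$, but you have already verified the endomorphism property directly and invertibility is obvious, so nothing is missing.
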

\begin{proof}
The first statement follows easily by induction, as in the proof of Lemma~\ref{L:ann-aut}, and the second statement follows just as in the proof of Lemma~\ref{I+xE_1_n-auto}, using the anticommutativity of $\cA$.
\end{proof}


Our immediate goal is to prove the converse of the second part of Lemma~\ref{L:T:ann-aut}, for sufficiently large $n$ and given a few extra conditions on the structure constants $c_{ij}^k$. We will do this over a series of lemmas, providing just the key steps in the proofs.

\begin{lemma}\label{L:T:autT4}
Let $\cA\in\cT'_{4}$ with $c_{13}^{4}=0$. Then, relative to the basis $(e_i)_{i=1}^4$, we have
\begin{equation*}
\aut \cA=\left\{ \left(\begin{smallmatrix}x&a_{12}&0&0\\0&y&0&0\\ a_{31}&a_{32}&xy&0\\a_{41}&a_{42}&-a_{31}y&xy^2\end{smallmatrix} \right) \mid a_{ij}\in\Co, x, y\in\Co^* \right\} .
\end{equation*}
\end{lemma}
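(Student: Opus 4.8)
The plan is to compute the automorphism group of an arbitrary $\cA\in\cT'_4$ with $c_{13}^4=0$ by a direct, step-by-step analysis of how an automorphism $\vf$ must act on the basis, exploiting the filtration coming from~\eqref{g_ij=0-for-k<=max(ij)}. First I would note that the anticommutativity plus~\eqref{g_ij=0-for-k<=max(ij)} forces the only possibly nonzero products among $e_1,e_2,e_3,e_4$ to be $e_1e_2=e_3$, $e_1e_3=c_{13}^4e_4=0$ (by hypothesis), $e_2e_3=c_{23}^4e_4=e_4$ — here using Definition~\ref{Tprime_n-defn} which gives $e_1e_2=e_3$ and $e_2e_3=e_4$ — and with $e_4\in\ann\cA$ by Lemma~\ref{L:T:ann-aut}(a). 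So the multiplication table is completely pinned down: $e_1e_2=e_3$, $e_2e_3=e_4$, all other products of basis elements zero. The derived series then shows $\cA^2=\la e_3,e_4\ra$ and $\cA^3=\la e_4\ra=\ann\cA$, and $\cA/\cA^2$ is abelian of dimension $2$ spanned by $\overline{e_1},\overline{e_2}$.

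Next I would extract the shape of the matrix $\Phi=(a_{ij})$ of $\vf$ from these invariant subspaces. Since $\vf$ preserves $\cA^2=\la e_3,e_4\ra$ and $\ann\cA=\la e_4\ra$, the matrix is block lower triangular: $a_{13}=a_{14}=a_{23}=a_{24}=0$, $a_{34}=0$, and $\vf(e_4)=a_{44}e_4$. Writing $\vf(e_1)=xe_1+a_{21}e_2+a_{31}e_3+a_{41}e_4$ and $\vf(e_2)=a_{12}e_1+ye_2+a_{32}e_3+a_{42}e_4$ (renaming $a_{11}=x$, $a_{22}=y$), the next constraints come from applying $\vf$ to the three defining relations. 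From $\vf(e_1)\vf(e_2)=\vf(e_3)$: the left side, using the multiplication table, equals $(xy-a_{21}a_{12})e_3+(\text{something})e_4$, so $a_{33}=xy-a_{21}a_{12}$ and the $e_4$-coefficient gives a formula for $a_{43}$. From $\vf(e_1)\vf(e_3)=0$ (since $e_1e_3=0$): expanding, $\vf(e_3)=a_{33}e_3+a_{43}e_4$, and $\vf(e_1)\vf(e_3)=x a_{33}(e_1e_3)+a_{21}a_{33}(e_2e_3)=a_{21}a_{33}e_4$, which forces $a_{21}a_{33}=0$. From $\vf(e_2)\vf(e_3)=\vf(e_4)=a_{44}e_4$: this gives $y\,a_{33}e_4 + a_{12}a_{33}(e_1e_3)=y a_{33}e_4=a_{44}e_4$, so $a_{44}=y\,a_{33}$.

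The main obstacle — and really the only subtle point — is ruling out $a_{21}\neq 0$, i.e.\ showing the $(2,1)$ entry must vanish so the claimed upper-triangular-ish form holds. The relation $a_{21}a_{33}=0$ forces either $a_{21}=0$ or $a_{33}=0$; but $a_{33}=0$ would make $\vf(e_3)=a_{43}e_4\in\ann\cA$, contradicting that $\vf$ is bijective (it would collapse $\la e_3,e_4\ra$, which is $2$-dimensional, onto a line). Hence $a_{33}\neq 0$ and $a_{21}=0$. With $a_{21}=0$ we get $a_{33}=xy$, and then $a_{44}=y\cdot xy=xy^2$; invertibility of $\vf$ (equivalently $\det\Phi = x\cdot y\cdot xy\cdot xy^2\neq 0$) then forces $x,y\in\Co^*$. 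Computing $a_{43}$ explicitly from $\vf(e_1)\vf(e_2)=\vf(e_3)$: $\vf(e_1)\vf(e_2)=xy\,e_3 + (x a_{32} - y a_{31})\cdot 0 + x y\cdot(\text{coefficient from } e_2e_3\text{ and }e_1e_2 \text{ cross terms})e_4$; carefully, the $e_4$-term is $-a_{31}y e_4$ coming from $a_{31}e_3 \cdot y e_2 = a_{31}y(e_3e_2)=-a_{31}y e_4$, giving $a_{43}=-a_{31}y$ as claimed. The remaining entries $a_{12},a_{31},a_{32},a_{41},a_{42}$ are unconstrained. Conversely, I would check that every matrix of the displayed form is indeed an automorphism by verifying it respects the three defining relations — a routine computation which, combined with Lemma~\ref{L:T:ann-aut}(b) for the special cases, confirms the reverse inclusion. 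This establishes the stated description of $\aut\cA$.
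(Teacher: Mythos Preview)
Your proposal is correct and follows essentially the same approach as the paper: both proofs pin down the matrix shape by invariance of natural subspaces and then apply $\vf$ to the three defining relations $e_1e_3=0$, $e_1e_2=e_3$, $e_2e_3=e_4$ to extract $a_{21}=0$, $a_{33}=xy$, $a_{43}=-a_{31}y$, $a_{44}=xy^2$. The only cosmetic difference is that the paper passes to the quotient $\cA/\la e_4\ra\in\cT'_3$ to obtain $a_{13}=a_{23}=0$ and $a_{33}\neq 0$, whereas you invoke invariance of $\cA^2=\la e_3,e_4\ra$ together with bijectivity of $\vf$; these are equivalent observations and the remainder of the argument is identical.
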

\begin{proof}
Let $\vf\in\aut \cA$ and assume that, relative to $(e_i)_{i=1}^4$, the matrix of $\vf$ is $\left(a_{ij}\right)_{1\leq i, j\leq 4}$. We know that $\ann \cA=\la e_{4}\ra$, so $\vf(e_4)=a_{44}e_4$, with $a_{44}\neq 0$. Moreover, $\cA/\la e_{4}\ra$ can be seen naturally as an element of $\cT'_{3}$ and $\vf$ induces an automorphism of $\cA/\la e_{4}\ra$ with matrix $\left(a_{ij}\right)_{1\leq i, j\leq 3}$ relative to the basis $(e_i+\la e_{4}\ra)_{i=1}^3$. The reasoning above then gives $a_{13}=a_{23}=0$ and $a_{33}\neq 0$.

Next we apply $\vf$ to the identity $e_1 e_3=0$, which follows from $c_{13}^4=0$ and~\eqref{g_ij=0-for-k<=max(ij)}, to get
\begin{align*}
0=(a_{11}e_1+a_{21}e_2+a_{31}e_3+a_{41}e_4)(a_{33}e_3+a_{43}e_4) =a_{21}a_{33}e_4.
\end{align*}
Thus, $a_{21}=0$. Below we list, for each identity in $\cA$, the corresponding relations we obtain when we apply $\vf$, as above.

\begin{center}
\begin{tabular}{l@{\qquad}l}
{\sc Identity}&{\sc relation}\\\hline
$e_1 e_3=0$ & $a_{21}=0$\\
$e_1 e_2=e_3$ & $a_{33}=a_{11}a_{22}$,\quad $a_{43}=-a_{31}a_{22}$\\
$e_2 e_3=e_4$ & $a_{44}=a_{33}a_{22}$
\end{tabular}
\end{center}
These show the direct inclusion in the statement. Since the listed relations comprise all relations in $\cA$, the reverse inclusion follows as well.
\end{proof}

Next, we look at the $n=5$ case.

\begin{lemma}\label{L:T:autT5}
Let $\cA\in\cT'_{5}$ such that $c_{13}^{4}=c_{14}^{5}=c_{24}^5=0$. Then, relative to the basis $(e_i)_{i=1}^5$, we have
\begin{equation*}
\aut \cA=\left\{ \left(\begin{smallmatrix}x&a_{12}&0&0&0\\0&y&0&0&0\\ 0&0&xy&0&0\\ a_{41}&a_{42}&0&xy^2&0\\
a_{51}&a_{52}&0&a_{54}&x^2 y^3
\end{smallmatrix} \right) \mid a_{ij}\in\Co, x, y\in\Co^*, a_{41}=c_{13}^5 x(1-y^2), a_{54}=xy(a_{12}c_{13}^5-a_{42}) \right\} .
\end{equation*}
\end{lemma}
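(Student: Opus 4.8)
The plan is to proceed exactly as in Lemma~\ref{L:T:autT4}, reducing the $5$-dimensional problem to the known $4$-dimensional one and then extracting the extra constraints imposed by the new relations. First I would take $\vf\in\aut\cA$ with matrix $(a_{ij})_{1\le i,j\le 5}$ relative to $(e_i)_{i=1}^5$. Since $\ann\cA=\la e_5\ra$ by Lemma~\ref{L:T:ann-aut}(a), we get $\vf(e_5)=a_{55}e_5$ with $a_{55}\ne 0$, and $\vf$ induces an automorphism $\bar\vf$ of $\cA/\la e_5\ra$, which is naturally an element of $\cT'_4$ satisfying $c_{13}^4=0$. Applying Lemma~\ref{L:T:autT4} to $\bar\vf$ immediately pins down the upper-left $4\times 4$ block: writing $x=a_{11}$, $y=a_{22}$, we obtain $a_{13}=a_{23}=a_{14}=a_{24}=0$, $a_{33}=xy$, $a_{43}=-a_{31}y$, $a_{44}=xy^2$, and (from the $e_1e_3=0$ relation one level up) $a_{21}=0$. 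That leaves only the fifth row $a_{51},\dots,a_{54},a_{55}$ and the coefficients $a_{31},a_{32},a_{41},a_{42}$ to be determined.

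The next step is to feed each defining relation of $\cA$ (those coming from $e_ie_{i+1}=e_{i+2}$ together with $c_{13}^4=c_{14}^5=c_{24}^5=0$ and whatever other structure constants survive in dimension $5$) through $\vf$ and read off the resulting scalar equations, exactly as in the table in the proof of Lemma~\ref{L:T:autT4}. The relations $e_1e_2=e_3$, $e_2e_3=e_4$, $e_3e_4=e_5$ will reproduce $a_{33}=xy$, $a_{43}=-a_{31}y$, $a_{44}=xy^2$ and give $a_{55}=a_{33}a_{44}=x^2y^3$, plus force $a_{31}=0$ and $a_{32}=0$ (since there is no $e_3$-component available to land in $e_5$ via $e_3e_4$, one should get $a_{31}=0$, hence also $a_{43}=0$, matching the stated matrix shape). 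The relation $e_1e_3=0$ gives $a_{21}=0$ (already known) and possibly a relation among fifth-row entries; the genuinely new input is from $e_1e_4$ and $e_2e_4$. Computing $\vf(e_1)\vf(e_4)$ and $\vf(e_2)\vf(e_4)$ and comparing with $\vf(c_{13}^5 e_5)$-type terms (note $e_1e_4$ and $e_2e_4$ may be nonzero via $c_{14}^5$, $c_{24}^5$ — but here those are zero, so $e_1e_4=e_2e_4=0$ in $\cA$, while $e_1e_4$ expands using $e_4=e_2e_3$ and Leibniz-free bilinearity) should yield precisely $a_{41}=c_{13}^5x(1-y^2)$ and $a_{54}=xy(a_{12}c_{13}^5-a_{42})$. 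I would also need to check that $a_{51},a_{52},a_{42},a_{12}$ remain free and that no relation constrains them further.

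The main obstacle, and the only place real care is needed, is the bookkeeping for the fifth-row entries and the appearance of the parameter $c_{13}^5$: one must correctly expand products like $\vf(e_1)\vf(e_4)$ where $\vf(e_1)=xe_1+a_{12}^{-1}$-free terms, $\vf(e_4)=a_{41}e_1+a_{42}e_2+xy^2e_4+a_{43}e_3+\text{(}e_5\text{-term)}$, using that in $\cA$ we have $e_1e_4=0$, $e_1e_2=e_3$, $e_2e_4=0$, $e_3e_4=e_5$, and that $e_1e_3=0$, so that the $e_5$-coefficient of $\vf(e_1)\vf(e_4)$ comes out as a combination of $a_{41}$, $a_{42}$, $x$, $y$, $a_{12}$ and $c_{13}^5$; setting this equal to the $e_5$-coefficient of $\vf(e_1e_4)=0$ (and likewise for $e_2e_4$) produces the two displayed equations. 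Once all relations are exhausted, the listed equations are seen to be exactly the defining relations, so the reverse inclusion is automatic, just as in Lemma~\ref{L:T:autT4}. This is routine but error-prone linear algebra; I would organize it in a table of the form ``{\sc identity} $\mapsto$ {\sc relation}'' to keep it transparent.
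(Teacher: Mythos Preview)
Your overall strategy---reduce to the quotient $\cA/\la e_5\ra$, invoke Lemma~\ref{L:T:autT4} to fix the upper-left $4\times 4$ block, then run through the identities of $\cA$ in a table---is exactly the paper's approach. However, several of your specific attributions are wrong, and one of them reflects a genuine confusion that would derail the computation if not caught.

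First, you have $\vf(e_4)$ written incorrectly: after applying Lemma~\ref{L:T:autT4} to the quotient, the fourth \emph{column} of the matrix is $(0,0,0,xy^2,a_{54})^T$, so $\vf(e_4)=xy^2e_4+a_{54}e_5$, with no $e_1,e_2,e_3$ components. Your expression ``$\vf(e_4)=a_{41}e_1+a_{42}e_2+xy^2e_4+a_{43}e_3+\cdots$'' mixes up rows and columns. Second, the relations $e_1e_4=0$ and $e_2e_4=0$ do \emph{not} yield the constraints on $a_{41}$ and $a_{54}$; computing $\vf(e_1)\vf(e_4)=(xe_1+a_{31}e_3+\cdots)(xy^2e_4+\cdots)=a_{31}xy^2e_5$ shows they give precisely $a_{31}=0$ and $a_{32}=0$. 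The constraints $a_{41}=c_{13}^5x(1-y^2)$ and $a_{54}=xy(a_{12}c_{13}^5-a_{42})$ come instead from applying $\vf$ to $e_1e_3=c_{13}^5e_5$ and $e_2e_3=e_4$, respectively (the latter at the level of its $e_5$-coefficient). You also omit the relation $a_{53}=0$, which comes from $e_1e_2=e_3$, and $a_{55}=x^2y^3$ from $e_3e_4=e_5$. Once you actually build the table you propose, these will all fall out correctly, but as written your predicted correspondences are swapped.
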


\begin{proof}
Let $\vf\in\aut \cA$. Using, as before, the fact that $\ann \cA=\la e_{5}\ra$ and Lemma~\ref{L:T:autT4}, we conclude that the matrix of $\vf$ relative to the standard basis is of the form
\begin{equation*}
\left(\begin{smallmatrix}x&a_{12}&0&0&0\\0&y&0&0&0\\ a_{31}&a_{32}&xy&0&0\\ a_{41}&a_{42}&-a_{31}y&xy^2&0\\
a_{51}&a_{52}&a_{53}&a_{54}&a_{55}
\end{smallmatrix} \right),
\end{equation*}
with $x, y, a_{55}\neq 0$. The remaining relations follow, as in the proof of Lemma~\ref{L:T:autT4}, by applying $\vf$ to the identities in $\cA$. We summarize these below.

\begin{center}
\begin{tabular}{l@{\qquad}l}
{\sc Identity}&{\sc relation}\\\hline
$e_1 e_4=0$ & $a_{31}=0$\\
$e_2 e_4=0$ & $a_{32}=0$\\
$e_3 e_4=e_5$ & $a_{55}=x^2 y^3$\\
$e_1 e_2=e_3$ & $a_{53}=0$\\
$e_1 e_3=c_{13}^{5}e_5$ & $a_{41}=c_{13}^{5}x(1-y^2)$\\
$e_2 e_3=e_4$ & $a_{54}=xy(a_{12}c_{13}^5-a_{42})$
\end{tabular}
\end{center}
Thus, the direct inclusion in the statement follows and the reverse follows as well since we have used all the identities in $\cA$.
\end{proof}

\begin{lemma}\label{L:T:autT6}
Let $\cA\in\cT'_{6}$ such that $c_{13}^{4}=c_{14}^{5}=c_{15}^{6}=c_{24}^5=c_{25}^6=c_{14}^6=c_{24}^6=c_{13}^6=0$ and $c_{13}^5 c_{35}^6\neq 0$. Then, relative to the basis $(e_i)_{i=1}^6$, we have
\begin{equation*}
\aut \cA=\left\{ \left(\begin{smallmatrix}x&0&0&0&0&0\\0&1&0&0&0&0\\ 
0&0&x&0&0&0\\ 0&0&0&x&0&0\\
0&0&0&0&x^2 &0\\
a_{61}&a_{62}&0&0&0&x^3
\end{smallmatrix} \right) \mid a_{61}, a_{62}\in\Co, x\in\Co^* \right\}.
\end{equation*}
\end{lemma}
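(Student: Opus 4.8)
The plan is to mimic the proofs of Lemmas~\ref{L:T:autT4} and~\ref{L:T:autT5}, building up the matrix of an arbitrary $\vf\in\aut\cA$ one row at a time by repeatedly using $\ann\cA=\la e_6\ra$ (which holds by Lemma~\ref{L:T:ann-aut}(a)) together with the already-established structure of the automorphism group in the lower-dimensional cases. First I would note that $\vf(e_6)=a_{66}e_6$ with $a_{66}\ne 0$ since $\ann\cA=\la e_6\ra$, and that $\vf$ induces an automorphism $\overline\vf$ of $\cA/\la e_6\ra$, which is naturally an element of $\cT'_5$ satisfying the hypotheses $c_{13}^4=c_{14}^5=c_{24}^5=0$ of Lemma~\ref{L:T:autT5}. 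Applying that lemma to $\overline\vf$ pins down the upper-left $5\times 5$ block of the matrix of $\vf$ up to the parameters $x,y,a_{12},a_{42}$ and the two forced entries $a_{41}=c_{13}^5x(1-y^2)$, $a_{54}=xy(a_{12}c_{13}^5-a_{42})$; the entries $a_{31}=a_{32}=a_{53}=0$ are also inherited. The bottom row $a_{61},\dots,a_{66}$ remains free at this stage.

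Next I would feed in the extra vanishing conditions. Applying $\vf$ to $e_1e_4=0$, $e_2e_4=0$, $e_1e_5=0$, $e_2e_5=0$ (all of which follow from the hypothesised vanishing of $c_{14}^5,c_{14}^6,c_{24}^5,c_{24}^6$ and~\eqref{g_ij=0-for-k<=max(ij)}, recalling $e_1e_4,e_2e_4\in\la e_5,e_6\ra$ and $e_1e_5,e_2e_5\in\la e_6\ra$) should force several of the lower-row entries to vanish and, crucially, should give $y=1$: indeed $\vf(e_1e_5)=0$ will, after substituting the known form of $\vf(e_1)$ and $\vf(e_5)=x^2y^3e_5+\cdots$, produce a relation forcing $a_{41}=0$, i.e.\ $c_{13}^5x(1-y^2)=0$, hence $y^2=1$; pushing further via $e_3e_5=c_{35}^6e_6$ and $e_3e_4=e_5$ and $e_2e_3=e_4$ should upgrade this to $y=1$ using $c_{13}^5c_{35}^6\ne 0$ and eliminate $a_{12}$, $a_{42}$, $a_{54}$. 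Then the chain of identities $e_1e_2=e_3$, $e_2e_3=e_4$, $e_3e_4=e_5$, $e_4e_5=e_6$ (or $e_3e_5=c_{35}^6e_6$), read off in order, determines $a_{33}=x$, $a_{44}=x$, $a_{55}=x^2$, $a_{66}=x^3$, matching the claimed diagonal, while $e_1e_3=c_{13}^5e_5$ (with $c_{13}^6=c_{13}^4=0$) and the analogous products handle the remaining off-diagonal entries, leaving only $a_{61},a_{62}$ free. The reverse inclusion follows because, as in the earlier lemmas, every defining identity of $\cA$ has been used, so any matrix of the stated form is automatically an automorphism; alternatively one can invoke Lemma~\ref{L:T:ann-aut}(b) for the maps $\vf(e_1)=e_1+a_{61}e_6$, $\vf(e_2)=e_2+a_{62}e_6$ and note the diagonal family $\mathrm{diag}(x,1,x,x,x^2,x^3)$ is visibly an automorphism.

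I expect the main obstacle to be bookkeeping rather than conceptual: one must be careful about the \emph{order} in which the identities are applied, since applying $\vf$ to, say, $e_1e_3=c_{13}^5e_5$ produces an expression involving $a_{41}$, $a_{12}$, $a_{53}$, etc., that is only clean once the earlier relations have already collapsed those entries. The genuinely delicate point is extracting $y=1$ (not merely $y=\pm1$): this is where the hypothesis $c_{13}^5c_{35}^6\ne 0$ must be used, and one should check it cannot be obtained from a single identity but rather requires combining the constraint coming from $e_3e_5=c_{35}^6e_6$ (which, expanded, reads $a_{33}a_{55}c_{35}^6=\cdots=x^3 c_{35}^6$ versus $a_{66}c_{35}^6=x^3c_{35}^6$, consistent) with one coming from an identity in which the coefficient $c_{13}^5$ of $e_5$ appears with a factor $(1-y)$ rather than $(1-y^2)$ — presumably $e_1e_4=0$ or $e_1e_3=c_{13}^5e_5$ after substituting the earlier data. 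Once $y=1$ is secured, everything else is a direct, if tedious, linear computation; I would present it, as the authors did for $n=4,5$, by tabulating each identity against the relation it yields and then asserting that these relations, taken together, are equivalent to the matrix having the displayed shape.
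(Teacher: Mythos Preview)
Your overall strategy matches the paper's exactly: descend to $\cA/\la e_6\ra$, invoke Lemma~\ref{L:T:autT5} to pin down the upper $5\times 5$ block, then read off additional relations from the identities of $\cA$. The bookkeeping you outline (first killing $a_{41},a_{42},a_{51},a_{52}$ via $e_1e_5=e_2e_5=e_1e_4=e_2e_4=0$, then clearing the bottom row via the chain $e_ie_{i+1}=e_{i+2}$) is also correct and essentially identical to what the paper does.

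However, there is a genuine gap at the one point you yourself flag as delicate: the extraction of $y=1$. Your proposed identities ($e_3e_5=c_{35}^6e_6$, $e_3e_4=e_5$, $e_2e_3=e_4$, or alternatively $e_1e_4=0$ or $e_1e_3=c_{13}^5e_5$) do \emph{not} upgrade $y^2=1$ to $y=1$. If you expand them, each either reproduces a relation you already have (e.g.\ $a_{41}=c_{13}^5x(1-y^2)$) or yields a constraint on a sixth-row entry, but none produces a factor of $(1-y)$. The paper's route is different and cleaner: apply $\vf$ to $e_4e_5=e_6$ to get $a_{66}=x^3y^5$, and to $e_3e_5=c_{35}^6e_6$ to get $c_{35}^6a_{66}=c_{35}^6x^3y^4$; since $c_{35}^6\ne 0$, comparing the two gives $y=1$ immediately (no need to pass through $y^2=1$). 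You do mention $e_4e_5=e_6$, but only as part of determining the diagonal \emph{after} $y=1$ has been established; you need it earlier. Once $y=1$ is in hand, the elimination of $a_{12}$ also requires a specific combination you did not name: $e_1e_3=c_{13}^5e_5$ gives $a_{65}=0$ (after $a_{51}=0$), while $e_3e_4=e_5$ gives $a_{65}=xa_{54}c_{35}^6$, whence $a_{54}=0$ and then, from $a_{54}=xa_{12}c_{13}^5$, finally $a_{12}=0$. Both nonvanishing hypotheses are used here.
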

\begin{proof}
The proof follows the same pattern as before. So, if $\vf\in\aut \cA$, then the matrix of $\vf$ relative to the standard basis is of the form
\begin{equation*}
\left(\begin{smallmatrix}x&a_{12}&0&0&0&0\\0&y&0&0&0&0\\ 0&0&xy&0&0&0\\ a_{41}&a_{42}&0&xy^2&0&0\\
a_{51}&a_{52}&0&a_{54}&x^2 y^3&0\\
a_{61}&a_{62}&a_{63}&a_{64}&a_{65}&a_{66}
\end{smallmatrix} \right),
\end{equation*}
with $x, y\in\Co^*$, $a_{41}=c_{13}^5 x(1-y^2)$ and $a_{54}=xy(a_{12}c_{13}^5-a_{42})$.
Now we apply $\vf$ to the identities in $\cA$ and the result is summarized in what follows.

\begin{center}
\begin{tabular}{l@{\qquad}l}
{\sc Identity}&{\sc relation}\\\hline
$e_4 e_5=e_6$ & $a_{66}=x^3y^5$\\
$e_3 e_5=c_{35}^6 e_6$ & $c_{35}^6 (a_{66}-x^3y^4)=0$\\
$e_1 e_5=0$ & $a_{41}=0$\\
$e_2 e_5=0$ & $a_{42}=0$
\end{tabular}
\end{center}
Therefore, as $c_{35}^6\neq 0$ and $x, y\neq 0$, we get $y=1$ and $a_{66}=x^3$. Notice also that the above is consistent with our previously deduced relation $a_{41}=c_{13}^5 x(1-y^2)$, and we also get $a_{54}=xa_{12}c_{13}^5$. Proceeding as before, we obtain the following additional relations.

\begin{center}
\begin{tabular}{l@{\qquad}l}
{\sc Identity}&{\sc relation}\\\hline
$e_1 e_4=0$ & $a_{51}=0$\\
$e_2 e_4=0$ & $a_{52}=0$\\
$e_1 e_2=e_3$ & $a_{63}=0$\\
$e_2 e_3=e_4$ & $a_{64}=0$\\
$e_1 e_3=c_{13}^5 e_5$ & $a_{65}=0$\\
$e_3 e_4=e_5$ & $a_{65}=xa_{54}c_{35}^6$
\end{tabular}
\end{center}
Therefore, as $xc_{35}^6\neq 0$, we deduce that $a_{54}=0$. But we had $a_{54}=xa_{12}c_{13}^5$ and $c_{13}^5\neq 0$, so $a_{12}=0$. The proof is thus complete.
\end{proof}

\begin{lemma}\label{L:T:autT7}
Let $\cA\in\cT'_{7}$ such that $c_{13}^{4}=c_{14}^{5}=c_{15}^{6}=c_{24}^5=c_{25}^6=c_{14}^6=c_{24}^6=c_{13}^6=0$ and $c_{13}^5 c_{35}^6 c_{46}^7\neq 0$. Then, relative to the basis $(e_i)_{i=1}^7$, we have
\begin{equation*}
\aut \cA=\left\{ I+a_{71}E_{71}+a_{72}E_{72}  \mid a_{71}, a_{72}\in\Co \right\}.
\end{equation*}
\end{lemma}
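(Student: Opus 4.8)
The plan is to follow the same reduction-to-the-quotient scheme used in Lemmas~\ref{L:T:autT4}, \ref{L:T:autT5} and~\ref{L:T:autT6}. By Lemma~\ref{L:T:ann-aut}(a) we have $\ann\cA=\la e_7\ra$, so any $\vf\in\aut\cA$ satisfies $\vf(e_7)=a_{77}e_7$ for some $a_{77}\in\Co^*$ and descends to $\tilde\vf\in\aut(\cA/\la e_7\ra)$. Now $\cA/\la e_7\ra\in\cT'_6$ and its structure constants $c_{ij}^k$ with $k\le 6$ coincide with those of $\cA$; in particular all the hypotheses of Lemma~\ref{L:T:autT6} hold for $\cA/\la e_7\ra$. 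Applying that lemma and lifting, the matrix of $\vf$ relative to $(e_i)_{i=1}^7$ must be of the form
\begin{equation*}
\left(\begin{smallmatrix}x&0&0&0&0&0&0\\0&1&0&0&0&0&0\\0&0&x&0&0&0&0\\0&0&0&x&0&0&0\\0&0&0&0&x^2&0&0\\a_{61}&a_{62}&0&0&0&x^3&0\\a_{71}&a_{72}&a_{73}&a_{74}&a_{75}&a_{76}&a_{77}\end{smallmatrix}\right),\qquad x\in\Co^*,\ a_{ij}\in\Co.
\end{equation*}

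Next I would pin down $x$. Applying $\vf$ to $e_5e_6=e_7$ (all cross terms lie in $\ann\cA$) gives $a_{77}=x^5$, and applying it to $e_4e_6=c_{46}^7e_7$ gives $x^4c_{46}^7=c_{46}^7a_{77}=c_{46}^7x^5$; since $c_{46}^7\ne 0$, we get $x=1$ and hence $a_{77}=1$. With $x=1$ established, the hypotheses $c_{14}^5=c_{14}^6=0$ and $c_{24}^5=c_{24}^6=0$ force $e_1e_4,e_2e_4\in\la e_7\ra$, and applying $\vf$ to these two identities the only term not already landing in the diagonal part is $a_{61}\,e_6e_4=-a_{61}c_{46}^7e_7$, respectively $-a_{62}c_{46}^7e_7$; since $c_{46}^7\ne 0$ this yields $a_{61}=a_{62}=0$.

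It remains to eliminate $a_{73},a_{74},a_{75},a_{76}$. With $x=1$ and $a_{61}=a_{62}=0$ we have $\vf(e_1)=e_1+a_{71}e_7$ and $\vf(e_2)=e_2+a_{72}e_7$, so applying $\vf$ in turn to $e_1e_2=e_3$, $e_2e_3=e_4$, $e_3e_4=e_5$, $e_4e_5=e_6$ --- each time every $e_7$-cross term vanishing because $e_7\in\ann\cA$ --- forces $\vf(e_3)=e_3$, $\vf(e_4)=e_4$, $\vf(e_5)=e_5$ and $\vf(e_6)=e_6$, i.e.\ $a_{73}=a_{74}=a_{75}=a_{76}=0$. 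Therefore $\vf=I+a_{71}E_{71}+a_{72}E_{72}$, and the reverse inclusion is precisely Lemma~\ref{L:T:ann-aut}(b) for $n=7$. I do not expect a genuine obstacle in this argument; the only thing requiring care is the bookkeeping of which products $e_ie_j$ are pushed into $\la e_7\ra$ by the vanishing hypotheses, together with making sure the condition $c_{46}^7\ne 0$ is invoked at the three places where it is needed --- to kill $x-1$, $a_{61}$ and $a_{62}$.
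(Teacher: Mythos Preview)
Your proof is correct and follows essentially the same quotient-and-pin-down strategy as the paper. The only difference is in the bookkeeping: the paper first kills $a_{76}, a_{75}$ via $e_4e_5=e_6$ and $e_3e_4=e_5$, and then uses $e_1e_5, e_2e_5\in\la e_7\ra$ together with $e_6e_5=-e_7$ to obtain $a_{61}=a_{62}=0$, whereas you kill $a_{61},a_{62}$ first using $e_1e_4,e_2e_4\in\la e_7\ra$ and the hypothesis $c_{46}^7\ne 0$; both orderings work.
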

\begin{proof}
The proof follows the ongoing pattern. So, if $\vf\in\aut \cA$, then the principal $6\times 6$ submatrix of the matrix of $\vf$ relative to the standard basis of $\cA$ is of the form given in the statement of Lemma~\ref{L:T:autT6} and $\vf(e_7)=a_{77}e_7$. We proceed as before listing the relations deduced from each of the identities in $\cA$.

\begin{center}
\begin{tabular}{l@{\qquad}l}
{\sc Identity}&{\sc relation}\\\hline
$e_5 e_6=e_7$ & $a_{77}=x^5$\\
$e_4 e_6=c_{46}^7 e_7$ & $(x^4-a_{77})c_{46}^7=0$
\end{tabular}
\end{center}
Since $xc_{46}^7\neq 0$, we deduce from the above that $x=1$.

\begin{center}
\begin{tabular}{l@{\qquad}l}
{\sc Identity}&{\sc relation}\\\hline
$e_4 e_5=e_6$ & $a_{76}=0$\\
$e_3 e_4=e_5$ & $a_{75}=0$\\
$e_2 e_5=c_{25}^7 e_7$ & $a_{62}=0$\\
$e_2 e_3=e_4$ & $a_{74}=0$\\
$e_1 e_5=c_{15}^7 e_7$ & $a_{61}=0$\\
$e_1 e_2=e_3$ & $a_{73}=0$
\end{tabular}
\end{center}
Therefore, $\vf$ is of the desired form, which proves the direct inclusion in the statement. For the reverse inclusion, thanks to Lemma~\ref{L:T:ann-aut}, we needn't check the remaining identities as any linear map with matrix of the form $I+a_{71}E_{71}+a_{72}E_{72}$, relative to the standard basis, is an automorphism of $\cA$.
\end{proof}

\begin{example}\label{ex:anticomm:tab}
Let $n=6$ and take $\cA\in\cT'_{6}$ such that $c_{13}^{4}=c_{14}^{5}=c_{15}^{6}=c_{24}^5=c_{25}^6=c_{14}^6=c_{24}^6=c_{13}^6=0$ and $c_{13}^5 c_{35}^6\neq 0$. Then $\cA$ depends just on the two nonzero parameters $\alpha=c_{13}^5$ and $\beta=c_{35}^6$ and we denote this algebra by $\cA(\alpha, \beta)$. Define a new basis for $\cA(\alpha, \beta)$ as follows:
\begin{align*}
E_1=\beta e_2,\quad E_2=-e_1-\alpha e_4,\quad E_3=\beta e_3,\quad E_4=\beta^2 e_4, \quad E_5=\beta^3 e_5,\quad E_6=\beta^5 e_6. 
\end{align*}
Then, we obtain
\begin{equation*}
E_1 E_2= (\beta e_2)(-e_1-\alpha e_4)=\beta e_1e_2=\beta e_3=E_3,\quad E_1 E_3=(\beta e_2)(\beta e_3)=\beta^2e_4=E_4,
\end{equation*}
and similar computations show that the multiplication in this basis is given by:
\begin{center}
\setlength{\tabcolsep}{12pt}
\begin{tabular}{lllll}
$E_1 E_2=E_3$, & $E_1 E_3=E_4$, & $E_1 E_4=0$, & $E_1 E_5=0$, & $E_1 E_6=0$, \\
$E_2 E_3=0$, & $E_2 E_4=0$, & $E_2 E_5=-\alpha/\beta^2 E_6$, & $E_2 E_6=0$, & $E_3 E_4=E_5$, \\
$E_3 E_5=E_6$, & $E_3 E_6=0$, & $E_4 E_5=E_6$, & $E_4 E_6=0$, & $E_5 E_6=0$. 
\end{tabular}
\end{center}
It follows that $\cA(\alpha, \beta)\simeq \mathbb{A}_{82}(-\alpha/\beta^2)$, where the family $\mathbb{A}_{82}(\gamma)$, for $\gamma\in\Co^*$, was defined in~\cite[Thm.\ 1]{kkl19} and shown to be generic in the variety of $6$-dimensional complex nilpotent anticommutative algebras in~\cite[Thm.\ 2]{kkl19}. Moreover, since the algebras $\left\{\mathbb{A}_{82}(\gamma)\right\}_{\gamma\in\Co^*}$ are pairwise non-isomorphic, it follows that $\cA(\alpha, \beta)\simeq \cA(\alpha', \beta')$ if and only if $\alpha{\beta'}^2=\alpha' \beta^2$. Thus, $\cA(\alpha, \beta)\simeq \cA(\alpha/\beta^2, 1)$ and we can assume without loss of generality that $\beta=c_{35}^6=1$. We conclude that the algebras $\left\{\cA(\alpha, 1)\right\}_{\alpha\in\Co^*}$ are pairwise non-isomorphic and generic in the variety of $6$-dimensional complex nilpotent anticommutative algebras.
\end{example}

The results above, along with Example~\ref{ex:anticomm:tab}, motivate the following auxiliary definition. 

\begin{definition}\label{hT_n-defn}
Let $n\ge 6$. Denote by $\hat\cT_n$ the family of those algebras in $\cT'_n$
%
such that $c_{13}^{4}=c_{14}^{5}=c_{15}^{6}=c_{24}^5=c_{25}^6=c_{14}^6=c_{24}^6=c_{13}^6=0$, $c_{35}^6=1$ and $c_{13}^5  c_{46}^7\neq 0$ (in case $n=6$, the latter condition should be replaced with $c_{13}^5\neq 0$).
\end{definition}

We can finally prove that the algebras in $\hat\cT_n$, with $n\geq 7$, have the smallest possible automorphism group among $n$-dimensional nilpotent anticommutative algebras.

\begin{proposition}\label{P:T:aut}
Let $n\geq 7$ and suppose that $\cA\in\hat\cT_n$. Then $\aut \cA=\{I+a_{n1}E_{n1}+a_{n2}E_{n2}  \mid a_{n1}, a_{n2}\in\Co\}$, relative to the basis $(e_i)_{i=1}^n$.
\end{proposition}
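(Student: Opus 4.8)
The plan is to argue by induction on $n\ge 7$, following closely the pattern established in Lemmas~\ref{L:T:autT4}--\ref{L:T:autT7}, which together furnish the base case $n=7$. For the inductive step, I would take $\cA\in\hat\cT_{n+1}$ with $n\ge 7$ and let $\vf\in\aut\cA$. Since $\ann\cA=\la e_{n+1}\ra$ by Lemma~\ref{L:T:ann-aut}(a), $\vf$ preserves the annihilator, so $\vf(e_{n+1})=a_{n+1,n+1}e_{n+1}$ with $a_{n+1,n+1}\ne 0$, and $\vf$ induces an automorphism $\bar\vf$ of $\cA/\la e_{n+1}\ra$. The key observation is that $\cA/\la e_{n+1}\ra$, viewed via the identification $e_i+\la e_{n+1}\ra\mapsto e_i$, lies in $\hat\cT_n$: the vanishing conditions and the relations $c_{35}^6=1$, $c_{13}^5c_{46}^7\ne 0$ are all inherited (here one uses $n\ge 7$ so that the condition $c_{46}^7\ne 0$ still makes sense in dimension $n$). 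Hence the induction hypothesis applies and the principal $n\times n$ block of the matrix of $\vf$ has the form $I+a_{n1}E_{n1}+a_{n2}E_{n2}$. It then remains to pin down the last row $a_{n+1,1},\dots,a_{n+1,n}$ and to show $a_{n+1,n+1}=1$, $a_{n+1,j}=0$ for $3\le j\le n$, while $a_{n+1,1},a_{n+1,2}$ stay free.

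To do this, I would apply $\vf$ to a carefully chosen list of identities of $\cA$, exactly as in the tables of Lemmas~\ref{L:T:autT6} and~\ref{L:T:autT7}. First, applying $\vf$ to $e_{n-1}e_n=e_{n+1}$ gives $a_{n+1,n+1}=1$ (using that the principal block is unipotent, each $e_ie_j$ with $i,j\le n$ is sent to itself modulo $\la e_{n+1}\ra$, and $e_{n-1}e_n=e_{n+1}\in\ann\cA$). Then, running downward through the "chain" relations $e_ie_{i+1}=e_{i+2}$ for $i=n-2,n-3,\dots$ together with the relations $e_1e_{i}=0$, $e_2e_i=0$ (valid for the appropriate range by the vanishing hypotheses and~\eqref{g_ij=0-for-k<=max(ij)}), one successively forces $a_{n+1,n}=a_{n+1,n-1}=\dots=a_{n+1,3}=0$: at each stage the coefficient of the unknown $a_{n+1,j}$ in the resulting relation is a nonzero product of diagonal entries (all equal to $1$), so it is determined. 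The coefficients $a_{n+1,1}$ and $a_{n+1,2}$ never get constrained because every product $e_ie_j$ landing in $e_{n+1}$ pairs $e_1$ or $e_2$ only with basis vectors that are annihilated by them. The converse inclusion is immediate from Lemma~\ref{L:T:ann-aut}(b) (with $\af=a_{n+1,1}$, $\bt=a_{n+1,2}$): any such map is an automorphism.

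The main obstacle I anticipate is bookkeeping rather than conceptual: one must be careful that the specific vanishing conditions of Definition~\ref{hT_n-defn} are tailored to $n=6,7$, so in the inductive step from $n$ to $n+1$ one must check that the relevant products $e_1e_i$, $e_2e_i$ still vanish in $\cA$ for the indices actually used, and that the relation $e_{n-1}e_n=e_{n+1}$ and the chain $e_ie_{i+1}=e_{i+2}$ provide enough independent equations to kill all of $a_{n+1,3},\dots,a_{n+1,n}$ and to fix $a_{n+1,n+1}$. One should also double-check that no relation accidentally constrains $a_{n+1,1}$ or $a_{n+1,2}$; this is where the precise pattern of zeros among the $c_{1i}^k$ and $c_{2i}^k$ (inherited from $\cT'_n$ together with Definition~\ref{hT_n-defn}) is used. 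Once these compatibilities are verified, the argument is a routine propagation of the base cases, and I would present it compactly by indicating the identity-to-relation table for general $n$ rather than writing out every step.
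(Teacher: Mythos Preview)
Your overall strategy matches the paper's: induct on $n\ge 7$, use Lemma~\ref{L:T:autT7} for the base case, pass to the quotient by $\la e_{n+1}\ra\in\hat\cT_n$, and then pin down the remaining entries by applying $\vf$ to well-chosen identities. However, there is a genuine gap in your inductive step.

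After applying the induction hypothesis, the matrix of $\vf$ has the form
\[
\vf(e_1)=e_1+a_{n1}e_n+a_{n+1,1}e_{n+1},\quad \vf(e_2)=e_2+a_{n2}e_n+a_{n+1,2}e_{n+1},\quad \vf(e_i)=e_i+a_{n+1,i}e_{n+1}\ (3\le i\le n),
\]
and you must still prove that $a_{n1}=a_{n2}=0$, not only control the last row. You omit this entirely. Without it, the chain relation $e_2e_3=e_4$ only yields $a_{n+1,4}=-a_{n2}\,c_{3n}^{n+1}$, not $a_{n+1,4}=0$, and similarly for $e_1e_2=e_3$. The paper handles this by first using $e_ie_{i+1}=e_{i+2}$ for $3\le i\le n-2$ to kill $a_{n+1,5},\dots,a_{n+1,n}$, then applying $\vf$ to the \emph{general} identities $e_1e_{n-1}=c_{1,n-1}^n e_n+c_{1,n-1}^{n+1}e_{n+1}$ and $e_2e_{n-1}=c_{2,n-1}^n e_n+c_{2,n-1}^{n+1}e_{n+1}$; the cross-term $a_{nj}\,e_ne_{n-1}=-a_{nj}e_{n+1}$ forces $a_{n1}=a_{n2}=0$. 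Only then do $e_1e_2=e_3$ and $e_2e_3=e_4$ give $a_{n+1,3}=a_{n+1,4}=0$.

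Relatedly, your assertion that ``$e_1e_i=0$, $e_2e_i=0$ (valid for the appropriate range)'' is false in $\hat\cT_{n+1}$ for $n\ge 7$: the vanishing conditions in Definition~\ref{hT_n-defn} concern only structure constants $c_{ij}^k$ with $k\le 6$, so for instance $e_1e_{n-1}$ is generically nonzero. It is precisely this nonvanishing (via $e_{n-1}e_n=e_{n+1}$) that is exploited to eliminate $a_{n1}$ and $a_{n2}$.
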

\begin{proof}
The proof is by induction on $n\geq 7$ and the base step has been settled in Lemma~\ref{L:T:autT7}. So suppose that the result holds for all algebras in $\hat\cT_n$ and let us take $\cA\in\hat\cT_{n+1}$, with $n\geq 7$, and $\vf\in\aut \cA$. Then, since $\ann \cA=\la e_{n+1}\ra$ we conclude that $\vf(e_{n+1})=a_{n+1, n+1}e_{n+1}$ with $a_{n+1, n+1}\in\Co^*$. In particular, $\vf$ induces an automorphism of $\cA/\la e_{n+1}\ra$. We can see $\cA/\la e_{n+1}\ra$ as an element of $\hat\cT_n$ via the basis $(e_i+\la e_{n+1}\ra)_{i=1}^n$ and the induction hypothesis implies that 
\begin{align*}
\vf(e_1)&=e_1+a_{n1}e_n+a_{n+1, 1}e_{n+1}, \\ 
\vf(e_2)&=e_2+a_{n2}e_n+a_{n+1, 2}e_{n+1} \quad\mbox{and}\\ 
\vf(e_i)&=e_i+a_{n+1, i}e_{n+1}, \quad \mbox{for all $3\leq i\leq n$.}
\end{align*}
So it remains to show that $a_{n+1, n+1}=1$ and $a_{n1}=a_{n2}=a_{n+1, i}=0$, for all $3\leq i\leq n$. As before, we have the following table.

\begin{center}
\begin{tabular}{l@{\qquad}l}
{\sc Identity}&{\sc relation}\\\hline
$e_{n-1} e_n=e_{n+1}$ & $a_{n+1, n+1}=1$\\
$e_{i} e_{i+1}=e_{i+2}$, for $3\leq i\leq n-2$& $a_{n+1, i+2}=0$, for $3\leq i\leq n-2$
\end{tabular}
\end{center}
The last relation shows that $a_{n+1, i}=0$ for all $5\leq i\leq n$. Using these we get the following additional relations, which conclude the proof.

\begin{center}
\begin{tabular}{l@{\qquad}l}
{\sc Identity}&{\sc relation}\\\hline
$e_1 e_{n-1}=c_{1,n-1}^{n}e_n+c_{1,n-1}^{n+1}e_{n+1}$ & $a_{n1}=0$\\
$e_2 e_{n-1}=c_{2,n-1}^{n}e_n+c_{2,n-1}^{n+1}e_{n+1}$ & $a_{n2}=0$\\
$e_{1} e_2=e_{3}$ & $a_{n+1,3}=0$\\
$e_{2} e_3=e_{4}$ & $a_{n+1,4}=0$
\end{tabular}
\end{center}
%
\end{proof}

In our next step we consider central extensions, as in Lemmas~\ref{constructing-R_n+1} and~\ref{constructing-S_n+1}. Since we are now working in the variety of anticommutative algebras, as in the previous section, we accordingly adapt the method described in Subsection~\ref{SS:ceSS}. So, for an anticommutative $n$-dimensional algebra ${\cA}$, ${\rm Z}^2_{\mathcal{AC}}\left( {\cA},\Co\right)$ is the subspace of ${\rm Z}^{2}\left( {\cA},\Co\right)$ consisting of the $2$-cocycles $\theta :{\cA}\times {\cA}\longrightarrow \Co$ such that $\theta(x,y)=-\theta(y,x)$, for all $x, y\in{\cA}$. We set ${\rm H}_\mathcal{AC}^2({\cA},\Co) = {\rm Z}^2_{\mathcal{AC}}\left( {\cA},\Co\right) \big/{\rm B^2}({\cA},\Co)$ and it follows that ${\cA}_\theta$ is anticommutative if and only if $\theta\in{\rm Z}^2_{\mathcal{AC}}\left( {\cA},\Co\right)$. We also define $\Delta^a_{ij}=\Delta_{ij}-\Delta_{ji}$, for $1\leq i, j\leq n$, so that $\left\{\Delta^a_{ij}\mid 1\leq i< j\leq n\right\}$ is a basis of ${\rm Z}_\mathcal{AC}^2({\cA},\Co)$.

\begin{lemma}\label{constructing-T_n+1}
Let $n\geq 5$ and $\cA\in\nil^\gamma_n$ be anticommutative such that  $\ann\cA=\la e_{n}\ra$ and $e_i e_{i+1}=e_{i+2}$, for all $1\le i\le n-2$. Suppose that $\aut(\cA)=\{I+xE_{n1}+yE_{n2}\mid x, y\in\Co\}$ in the basis $(e_i)_{i=1}^n$. Then there is a parametric family of pairwise non-isomorphic, anticommutative $1$-dimensional central extensions $\cB$ of $\cA$ with basis $(e_i)_{i=1}^{n+1}$, extending the basis of $\cA$, such that $\ann\cB=\la e_{n+1}\ra$, $c_{1,n-1}^{n+1}=0=c_{2,n-1}^{n+1}$, $e_i e_{i+1}=e_{i+2}$ for all $1\leq i\leq n-1$, and the structure constants $c_{ij}^{n+1}$ of $\cB$ in this basis are arbitrary independent complex parameters for all $1\leq i\leq j-2\leq n-2$, $(i,j)\ne (1,n-1), (2,n-1)$. 
\end{lemma}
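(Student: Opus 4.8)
The plan is to mimic, almost verbatim, the proof of Lemma~\ref{constructing-R_n+1}, adjusted to the anticommutative setting: we work inside ${\rm Z}^2_{\mathcal{AC}}(\cA)$ with basis $\{\Delta^a_{ij}\mid 1\le i<j\le n\}$ and we exploit the two-parameter group $\aut{\cA}=\{I+xE_{n1}+yE_{n2}\mid x,y\in\Co\}$ in place of $\{I+xE_{n1}\}$. First I would describe ${\rm H}^2_{\mathcal{AC}}(\cA)$. Since $\cA$ satisfies~\eqref{g_ij=0-for-k<=max(ij)} and $e_ie_{i+1}=e_{i+2}$ for $1\le i\le n-2$, we have $\delta e_1^*=\delta e_2^*=0$ and, for $3\le m\le n$,
\[
\delta e_m^*=\Delta^a_{m-2,m-1}+\sum_{\substack{1\le i<j\le m-1\\ (i,j)\ne(m-2,m-1)}}c_{ij}^m\Delta^a_{ij}.
\]
These are linearly independent, their leading terms $\Delta^a_{1,2},\dots,\Delta^a_{n-2,n-1}$ being distinct, so $\dim{\rm B}^2(\cA)=n-2$ and ${\rm H}^2_{\mathcal{AC}}(\cA)$ has a basis given by the classes $[\Delta^a_{ij}]$ with $1\le i<j\le n$ and $(i,j)\ne(k,k+1)$ for $1\le k\le n-2$; equivalently, each class has a representative $\0=\sum\af_{ij}\Delta^a_{ij}$ with $\af_{k,k+1}=0$ for $1\le k\le n-2$.

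Next I would compute the action of $\phi=I+xE_{n1}+yE_{n2}\in\aut{\cA}$ on such a representative $\0$. Writing $A$ for the skew matrix of $\0$ and $M$ for the matrix of $\phi$, the matrix computation $M^TAM$ (carried out exactly as in the proof of Lemma~\ref{constructing-R_n+1}) shows that $\phi$ fixes $\af_{ij}$ for all $3\le i<j\le n$; in particular $\af^*_{n-1,n}=\af_{n-1,n}$, while $\af^*_{1,n-1}=\af_{1,n-1}-x\af_{n-1,n}$ and $\af^*_{2,n-1}=\af_{2,n-1}-y\af_{n-1,n}$ (the other entries of rows $1$ and $2$ also change, but this is irrelevant below). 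Moreover, reducing $\phi\cdot\0$ modulo ${\rm B}^2(\cA)$ to return to the normalized form of representatives does not affect the coefficients of $[\Delta^a_{1,n-1}]$, $[\Delta^a_{2,n-1}]$ and $[\Delta^a_{n-1,n}]$: no coboundary involves $\Delta^a_{n-1,n}$, and the only coboundary involving $\Delta^a_{1,n-1}$ or $\Delta^a_{2,n-1}$ — namely $\delta e_n^*$, whose leading term is $\Delta^a_{n-2,n-1}$ — enters the reduction multiplied by $\af^*_{n-2,n-1}=\af_{n-2,n-1}=0$. Hence, exactly as in Lemma~\ref{constructing-R_n+1}, if the coefficient $\af_{n-1,n}$ of $[\Delta^a_{n-1,n}]$ is nonzero we rescale $\0$ so that $\af_{n-1,n}=1$ and then take $x=\af_{1,n-1}$, $y=\af_{2,n-1}$, arriving at the family of normal forms
\[
\La\ \sum_{\substack{1\le i\le j-2\le n-2\\ (i,j)\ne(1,n-1),(2,n-1)}}c_{ij}^{n+1}\Delta^a_{ij}+\Delta^a_{n-1,n}\ \Ra_{c_{ij}^{n+1}\in\Co}.
\]

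To finish, I would observe that any $\phi\in\aut{\cA}$ together with a scalar that fix such a normal form must have $x=y=0$ and scalar $1$ (compare the coefficients of $[\Delta^a_{n-1,n}]$ first, then of $[\Delta^a_{1,n-1}]$ and $[\Delta^a_{2,n-1}]$); thus these normal forms represent pairwise distinct $\aut{\cA}$-orbits of lines in ${\rm H}^2_{\mathcal{AC}}(\cA)$, and each parameter tuple is attained (take $\0$ already in normal form with $x=y=0$). Since each normalized $\0$ satisfies $\0(e_{n-1},e_n)=1$, we have $e_n\notin\ann{\0}$ and hence $\ann{\cA}\cap\ann{\0}=0$; the Skjelbred--Sund criterion (Subsection~\ref{SS:ceSS}) then yields pairwise non-isomorphic anticommutative $1$-dimensional central extensions $\cB=\cA_\0$. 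Reading off the multiplication: these $\cB$ satisfy~\eqref{g_ij=0-for-k<=max(ij)}, $\ann{\cB}=(\ann{\cA}\cap\ann{\0})\oplus\Co=\la e_{n+1}\ra$, $e_ie_{i+1}=e_{i+2}$ for $1\le i\le n-1$ (the case $i=n-1$ being $\0(e_{n-1},e_n)=1$), $c_{1,n-1}^{n+1}=c_{2,n-1}^{n+1}=0$, and the remaining $c_{ij}^{n+1}$ (for $1\le i\le j-2\le n-2$, $(i,j)\ne(1,n-1),(2,n-1)$) are arbitrary independent parameters, as required.

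The main obstacle — the only point that is not a mechanical transcription of the proof of Lemma~\ref{constructing-R_n+1} — is the bookkeeping in the second step: one must check that passing to ${\rm H}^2_{\mathcal{AC}}(\cA)$ leaves the three relevant coefficients untouched, which reduces to the observation that $\delta e_n^*$ (the only coboundary that could interfere with $[\Delta^a_{1,n-1}]$ or $[\Delta^a_{2,n-1}]$) enters the reduction with coefficient $\af_{n-2,n-1}=0$, while no coboundary involves $\Delta^a_{n-1,n}$. Once this is in place, the argument is the one-parameter argument of Lemma~\ref{constructing-R_n+1}, run with the pair $x,y$ and with $[\Delta^a_{n-1,n}]$ playing the role of $[\Delta_{nn}]$.
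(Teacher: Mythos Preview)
Your proof is correct and follows essentially the same approach as the paper: compute ${\rm H}^2_{\mathcal{AC}}(\cA)$, act by $\phi=I+xE_{n1}+yE_{n2}$, and use the two free parameters $x,y$ to normalize the coefficients of $[\Delta^a_{1,n-1}]$ and $[\Delta^a_{2,n-1}]$ once $\alpha_{n-1,n}\ne 0$. Your extra care in checking that the coboundary reduction back to normalized representatives leaves the three relevant coefficients untouched (because only $\delta e_n^*$ could interfere and it enters with coefficient $\alpha_{n-2,n-1}=0$) is a point the paper passes over silently.
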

\begin{proof}
The proof is similar to that of Lemma~\ref{constructing-R_n+1}, but there are a few differences, also related to the fact that the automorphism group of $\cA$ is larger. Thus, we will just highlight the differences. We have 
\begin{align*}
{\rm B}^2(\cA)=\bigoplus_{m=2}^{n-1}\La\Delta^{a}_{m-1,m}+\sum_{i< j\leq m, (i, j)\neq (m-1,m)}c_{ij}^{m+1}\Delta^{a}_{ij}\Ra,
\end{align*}
so ${\rm H}^2_{\mathcal{AC}}(\cA)=\la \left[\Delta_{ij}^a\right] \mid 1\leq i\leq j-2\leq n-2\ra\oplus\la \left[\Delta_{n-1,n}^a\right]\ra$.

Let $\phi=I+xE_{n1}+yE_{n2}\in\aut(\cA)$ and $\0=\sum_{1\leq i\leq j-2\leq n-2}\af_{ij}\left[\Delta_{ij}^a\right]+\af_{n-1,n}\left[\Delta_{n-1,n}^a\right]\in {\rm H}^2_{\mathcal{AC}}(\cA)$. Consider the corresponding matrix $A=\sum_{1\leq i\leq j-2\leq n-2}\af_{ij}(E_{ij}-E_{ji})+\af_{n-1,n}(E_{n-1,n}-E_{n, n-1})$. Then, computing $\phi^TA\phi$, we find that $\phi(\0)=\sum_{1\leq i\leq j-2\leq n-2}\af^*_{ij}\left[\Delta_{ij}^a\right]+\af^*_{n-1,n}\left[\Delta_{n-1,n}^a\right]$, where
\begin{align*}
\alpha_{1j}^* &=\alpha_{1j}-x\alpha_{jn},\ \mbox{for $3\leq j\leq n-1$}, \\
\alpha_{2j}^* &=\alpha_{2j}-y\alpha_{jn},\ \mbox{for $4\leq j\leq n-1$,\quad and}\\
\alpha_{ij}^* &=\alpha_{ij}, \ \mbox{otherwise.}
\end{align*}
For $\alpha_{n-1, n}\neq 0$, we can take $x=\alpha_{1,n-1}\alpha_{n-1, n}^{-1}$ and $y=\alpha_{2,n-1}\alpha_{n-1, n}^{-1}$, which gives the family of representatives of distinct orbits
\begin{equation*}
 \La\sum_{\substack{1\leq i\leq j-2\leq n-2\\ (i,j)\ne (1,n-1), (2,n-1)}}c_{ij}^{n+1}\left[\Delta_{ij}^a\right]+\left[\Delta_{n-1,n}^a\right]\Ra_{c_{ij}^{n+1}\in\Co},
\end{equation*}
as claimed.
\end{proof}

We need yet another restriction on the structure constants of the algebras in $\hat\cT_7$ to ensure that different parameter choices give different isomorphism classes.

\begin{lemma}\label{L:anti:7:isoparameters}
Let $\cA\in\hat\cT_7$ with $c_{15}^{7}=0=c_{25}^{7}$. Then $\cA$ is isomorphic to a unique algebra $\cA'\in\hat\cT_7$ with ${d}_{15}^{7}=0={d}_{25}^{7}$ and ${d}_{46}^7=1$, where the ${d}_{ij}^k$ are the structure constants of $\cA'$.
\end{lemma}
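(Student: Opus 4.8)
The plan is to establish existence and uniqueness separately. For \textbf{existence}, I would simply rescale the basis. Take $x:=c_{46}^7\in\Co^*$ and consider the diagonal change of basis $e_i\mapsto\lambda_ie_i$ where the $\lambda_i$ follow a Fibonacci-type pattern $\lambda_{i+2}=\lambda_i\lambda_{i+1}$ with $\lambda_1=x$ and $\lambda_2=1$, so that $(\lambda_i)_{i=1}^7=(x,1,x,x,x^2,x^3,x^5)$. The exponents are chosen precisely so that the scaling factor $\lambda_i\lambda_{i+1}\lambda_{i+2}^{-1}$ attached to each chain relation $e_ie_{i+1}=e_{i+2}$ equals $1$, and so that the factor $\lambda_3\lambda_5\lambda_6^{-1}$ attached to $c_{35}^6$ is also $1$; on the other hand the factor $\lambda_4\lambda_6\lambda_7^{-1}=x^{-1}$ attached to $c_{46}^7$ is inverted by our choice of $x$, normalizing $c_{46}^7$ to $1$. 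Being diagonal, the change of basis preserves~\eqref{g_ij=0-for-k<=max(ij)}, anticommutativity, all the defining zeros of $\hat\cT_7$, the conditions $c_{15}^7=c_{25}^7=0$, and it sends $c_{13}^5$ to itself (the relevant factor being $\lambda_1\lambda_3\lambda_5^{-1}=1$), which stays nonzero. Hence $\cA$ is isomorphic to an algebra $\cA'\in\hat\cT_7$ meeting all the required normalizations.

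For \textbf{uniqueness}, I would show that any two algebras $\cA',\cA''\in\hat\cT_7$ which satisfy $d_{15}^7=d_{25}^7=0$ and $d_{46}^7=1$ and are isomorphic must in fact coincide; applied to $\cA'$ as just constructed and any other such algebra isomorphic to $\cA$, this is exactly uniqueness. So let $\vf\colon\cA'\to\cA''$ be an isomorphism. By Lemma~\ref{L:T:ann-aut}(a), $\ann{\cA'}=\ann{\cA''}=\la e_7\ra$, so $\vf(e_7)=\mu e_7$ for some $\mu\in\Co^*$ and $\vf$ descends to an isomorphism between $\cA'/\la e_7\ra$ and $\cA''/\la e_7\ra$, both of which lie in $\hat\cT_6$ (the structure constants with indices at most $6$ being inherited, as in the proof of Proposition~\ref{P:T:aut}). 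By Example~\ref{ex:anticomm:tab}, isomorphic algebras of $\hat\cT_6$ coincide, so $\cA'/\la e_7\ra=\cA''/\la e_7\ra=:\cB$ and the induced map belongs to $\aut{\cB}$; by Lemma~\ref{L:T:autT6} its matrix in $(e_i+\la e_7\ra)_{i=1}^6$ is of the form displayed there, depending on a single scalar $x\in\Co^*$. Consequently the matrix of $\vf$ in $(e_i)_{i=1}^7$ has that $6\times 6$ block in its upper-left corner, $\mu$ in entry $(7,7)$, and an otherwise unconstrained seventh row. Applying $\vf$ to the relations $e_5e_6=e_7$ and $e_4e_6=e_7$ of $\cA'$---the latter being available precisely because $d_{46}^7=1$---gives $\mu=x^5$ and $\mu=x^4$, hence $x=\mu=1$.

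It remains to clear the seventh row of $\vf$; here the argument mimics the proofs of Lemmas~\ref{L:T:autT6} and~\ref{L:T:autT7}, but for an isomorphism rather than an automorphism. With $x=\mu=1$, applying $\vf$ to $e_1e_5=0$ and $e_2e_5=0$ (these products vanish in both algebras because $d_{15}^6=d_{25}^6=0$ by definition of $\hat\cT_7$ and $d_{15}^7=d_{25}^7=0$ by hypothesis) and using $e_6e_5=-e_7$ in $\cA''$ forces the $(6,1)$- and $(6,2)$-entries of $\vf$ to vanish; then applying $\vf$ successively to $e_1e_2=e_3$, $e_2e_3=e_4$, $e_3e_4=e_5$, $e_4e_5=e_6$ forces the seventh-row entries in columns $3,4,5,6$ to vanish as well. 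Thus $\vf$ has matrix $I+a_{71}E_{71}+a_{72}E_{72}$, which by Lemma~\ref{L:T:ann-aut}(b) is also an automorphism of $\cA'$; therefore $\vf(e_i)\cdot_{\cA''}\vf(e_j)=\vf(e_i\cdot_{\cA'}e_j)=\vf(e_i)\cdot_{\cA'}\vf(e_j)$ for all $i,j$, and since $(\vf(e_i))_{i=1}^7$ is a basis we conclude $\cA'=\cA''$. The main obstacle is this uniqueness half, whose crux is the observation that the normalization $d_{46}^7=1$ is exactly what forces the scalar $x$ of the induced automorphism of $\cB$ to be $1$, after which everything reduces to routine computations parallel to the earlier automorphism lemmas.
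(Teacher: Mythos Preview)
Your proof is correct and follows essentially the same approach as the paper: the same diagonal rescaling for existence, and the same reduction via quotients by the annihilator plus Example~\ref{ex:anticomm:tab} and Lemma~\ref{L:T:autT6} for uniqueness. The only minor difference is your concluding step---rather than directly comparing the free parameters once the matrix of $\vf$ is determined, you observe that $\vf$ is simultaneously an isomorphism $\cA'\to\cA''$ and (by Lemma~\ref{L:T:ann-aut}(b)) an automorphism of $\cA'$, which neatly forces the two multiplications to agree; this is a slight streamlining of what the paper leaves as a routine verification.
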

\begin{proof}
Notice first that the hypotheses on $\cA$ imply that there are exactly $9$ parameters of freedom, namely: $c_{13}^{5}, c_{13}^{7}, c_{14}^{7}, c_{16}^{7}, c_{24}^{7}, c_{26}^{7}, c_{35}^{7}, c_{36}^{7}, c_{46}^{7}$, with $c_{13}^{5}c_{46}^{7}\neq 0$. Let $x=c_{46}^{7}$ and consider the new basis
\begin{align*}
E_1=x e_1,\quad E_2=e_2,\quad E_3=x e_3,\quad E_4=x e_4, \quad E_5=x^2 e_5,\quad E_6=x^3 e_6, \quad E_7=x^5 e_7. 
\end{align*}
Let ${d}_{ij}^k$ be the structure constants of $\cA$ relative to this basis. Then, it is straightforward to see that these structure constants satisfy all of the conditions determined by $\hat\cT_7$, along with ${d}_{15}^{7}=0={d}_{25}^{7}$. For example, 
\begin{align*}
E_3 E_5= x^3 e_3 e_5=x^3(c_{35}^6 e_6+c_{35}^7 e_7)=x^3e_6+x^3 c_{35}^7 e_7=E_6+c_{35}^7x^{-2}E_7,
\end{align*}
so ${d}_{35}^{6}=1$. Moreover, 
\begin{align*}
E_4 E_6= x^4 e_4 e_6=x^4 c_{46}^7e_7=x^5 e_7=E_7,
\end{align*}
and thus ${d}_{46}^7=1$. This proves the existence part of the statement. 

For the uniqueness, with $\cA'$ as in the statement, suppose also that ${c}_{46}^7=1$ and $\phi:\cA\longrightarrow \cA'$ is an isomorphism. Then, $\phi$ induces an isomorphism $\overline\phi$ on the quotient algebras by their respective annihilators. By Example~\ref{ex:anticomm:tab}, it follows that $c_{13}^5=d_{13}^5$ and $\overline\phi$ is thus an automorphism. We can thence apply Lemma~\ref{L:T:autT6} to get the matrix of $\overline\phi$ and then lift it to $\phi$. From this point it is a simple matter to use the multiplication tables of $\cA$ and $\cA'$ and the matrix of $\phi$ to deduce that the respective free parameters in $\cA$ and $\cA'$ are equal.
\end{proof}

At last we define the family $\cT_n$, which will be shown to be generic in the variety of $n$-dimensional anticommutative algebras.

\begin{definition}\label{T_n-defn}
Let $n\ge 6$ (in case $n=6$, the condition $c_{46}^7=1$ is to be ignored). Denote by $\cT_n$ the family of those algebras in $\hat\cT_n$ such that ${c}_{46}^7=1$ and $c_{1i}^{i+2}=0=c_{2i}^{i+2}$, for all $i\geq 4$. In other words, $\cT_n$ is the family of
$n$-dimensional complex anticommutative algebras whose structure constants $(c_{ij}^k)_{i,j,k}$ relative to the basis $(e_i)_{i=1}^{n}$ satisfy~\eqref{g_ij=0-for-k<=max(ij)} and such that:
\begin{itemize}
\item $e_i e_{i+1}=e_{i+2}$ for all $1\le i\le n-2$;
\item $c_{1i}^{i+2}=0=c_{2i}^{i+2}$, for all $4\leq i\leq n-2$;
\item $c_{13}^{4}=c_{14}^{5}=c_{24}^5=c_{15}^{6}=c_{25}^6=c_{13}^6=0$;
\item $c_{13}^5 \neq 0$;
\item $c_{35}^6=c_{46}^7=1$.
\end{itemize}
The remaining structure constants $c_{ij}^{k}$ are arbitrary, subject only to the anticommutativity constraint.
\end{definition}

\begin{proposition}
Let $n\geq 6$. Then $\dim\left(\overline{\bigcup_{T\in\cT_n}O(T)}\right)=\frac{(n-2)(n^2+2n+3)}{6}$. 
\end{proposition}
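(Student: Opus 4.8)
The plan is to mirror the argument of Proposition~\ref{P:dim:nilp:Rn} and its commutative analogue, computing the dimension of $V_n:=\overline{\bigcup_{T\in\cT_n}O(T)}$ by a fibre count for the orbit morphism restricted to the family $\cT_n$. Write $d_n:=\dim\cT_n$ for the number of independent complex parameters appearing in Definition~\ref{T_n-defn}, and consider $\psi\colon\operatorname{GL}_n(\Co)\times\cT_n\to\nil_n$, $\psi(g,T)=gT$, whose image is $\bigcup_{T\in\cT_n}O(T)$. If the algebras in $\cT_n$ are pairwise non-isomorphic, then for $z=g_0T_0$ in the image one has $\psi^{-1}(z)=\{(g,T_0)\mid g\in g_0\aut{T_0}\}\cong\aut{T_0}$, so every non-empty fibre of $\psi$ has the same dimension $\dim\aut{T}$; since $\operatorname{GL}_n(\Co)\times\cT_n$ is irreducible, the theorem on fibre dimensions of dominant morphisms then gives $\dim V_n=\dim(\operatorname{GL}_n(\Co)\times\cT_n)-\dim\aut{T}=n^2+d_n-\dim\aut{T}$.

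For $n\ge 7$ I would use that $\cT_n\subseteq\hat\cT_n$ and invoke Proposition~\ref{P:T:aut} to get $\dim\aut{T}=2$ for every $T\in\cT_n$, so $\dim O(T)=n^2-2$. The pairwise non-isomorphism of the members of $\cT_n$ I would establish by induction on $n$. The base case $n=7$ is the uniqueness statement of Lemma~\ref{L:anti:7:isoparameters}, once one observes that $\cT_7$ is precisely the subfamily of $\hat\cT_7$ with $c_{15}^7=c_{25}^7=0$ and $c_{46}^7=1$. For the inductive step, an isomorphism $\cB\to\cB'$ between algebras of $\cT_{n+1}$ carries $\ann\cB=\la e_{n+1}\ra$ onto $\ann{\cB'}=\la e_{n+1}\ra$ (Lemma~\ref{L:T:ann-aut}(a)), hence induces an isomorphism of the quotients $\cB/\la e_{n+1}\ra$ and $\cB'/\la e_{n+1}\ra$, which lie in $\cT_n$; by the induction hypothesis this is an equality of $\cT_n$-algebras, and then---since the common quotient $\cA_0$ satisfies $\aut{\cA_0}=\{I+xE_{n1}+yE_{n2}\mid x,y\in\Co\}$ by Proposition~\ref{P:T:aut}---Lemma~\ref{constructing-T_n+1} exhibits $\cB$ and $\cB'$ as members of one and the same family of pairwise non-isomorphic central extensions of $\cA_0$, forcing $\cB=\cB'$.

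It then remains to determine $d_n$ for $n\ge 7$. The base value $d_7=8$ is read off from Lemma~\ref{L:anti:7:isoparameters}: after imposing $c_{46}^7=1$ there remain exactly the parameters $c_{13}^5,c_{13}^7,c_{14}^7,c_{16}^7,c_{24}^7,c_{26}^7,c_{35}^7,c_{36}^7$ (with $c_{13}^5\ne 0$). For the step $n\to n+1$ with $n\ge 7$, Lemma~\ref{constructing-T_n+1} supplies one free structure constant $c_{ij}^{n+1}$ for each pair with $1\le i\le j-2\le n-2$ and $(i,j)\ne(1,n-1),(2,n-1)$, i.e.\ $\binom{n-1}{2}-2$ of them, and the only additional constraints of Definition~\ref{T_n-defn} that involve the top index $n+1$ are $c_{1,n-1}^{n+1}=0=c_{2,n-1}^{n+1}$, which are already among the excluded pairs (every remaining constraint of that definition concerns a structure constant $c_{ij}^k$ with $k\le 7<n+1$). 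Hence $d_{n+1}=d_n+\binom{n-1}{2}-2$, which together with $d_7=8$ yields $d_n=\binom{n-1}{3}-2n+2$, and therefore
\[
\dim V_n=n^2+d_n-2=n^2-2n+\binom{n-1}{3}=\frac{6n^2-12n+(n-1)(n-2)(n-3)}{6}=\frac{n^3-n-6}{6}=\frac{(n-2)(n^2+2n+3)}{6}.
\]
The case $n=6$ needs separate treatment because there the automorphism groups are larger: by Example~\ref{ex:anticomm:tab} one has $\cT_6=\{\cA(\alpha,1)\mid\alpha\in\Co^*\}$, so $d_6=1$ and the members are pairwise non-isomorphic, while $\dim\aut{\cA(\alpha,1)}=3$ by Lemma~\ref{L:T:autT6}; the same fibre count then gives $\dim V_6=36+1-3=34$, in agreement with the formula at $n=6$.

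The main obstacle I anticipate is the careful parameter bookkeeping---specifically verifying the recurrence $d_{n+1}=d_n+\binom{n-1}{2}-2$ by checking that the extra defining relations of $\cT_{n+1}$ at the top level $n+1$ are exactly the two cocycle classes excluded in Lemma~\ref{constructing-T_n+1}, and confirming the base value $d_7=8$ directly. Everything else is routine once the results on automorphism groups and on pairwise non-isomorphism (Proposition~\ref{P:T:aut}, Lemmas~\ref{constructing-T_n+1} and~\ref{L:anti:7:isoparameters}) are in hand, exactly as for $\R_n$ and $\cS_n$.
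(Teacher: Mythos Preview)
Your proposal is correct and follows essentially the same approach as the paper: for $n\ge 7$ you use Proposition~\ref{P:T:aut} to get $\dim\aut T=2$, establish pairwise non-isomorphism via Lemma~\ref{L:anti:7:isoparameters} at $n=7$ and Lemma~\ref{constructing-T_n+1} inductively, and solve the same recurrence $d_{n+1}=d_n+\binom{n-1}{2}-2$ with $d_7=8$ (your closed form $d_n=\binom{n-1}{3}-2n+2$ equals the paper's $r_n=\frac{(n-1)(n+1)(n-6)}{6}$). The only minor difference is that for $n=6$ the paper defers to \cite[Thm.~2]{kkl19}, whereas you compute it directly from Lemma~\ref{L:T:autT6} and Example~\ref{ex:anticomm:tab}; and you phrase the orbit count via the fibre-dimension theorem for the map $\psi$, while the paper simply adds orbit dimension to parameter count---these are the same argument in slightly different language.
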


\begin{proof}
In case $n=6$, the statement follows from Example~\ref{ex:anticomm:tab} and~\cite[Thm.\ 2]{kkl19}. So assume that $n\geq 7$. The remainder of the proof is again an adaptation of the proof of Proposition~\ref{P:dim:nilp:Rn}.

By Proposition~\ref{P:T:aut}, for every $\cA\in\cT_n$ with $n\geq 7$, we have $\dim\aut \cA=2$, so $\dim O(\cA)=n^2-2$. Moreover, by Lemma~\ref{L:anti:7:isoparameters}, different choices of structure constants in $\cT_7$ give rise to distinct isomorphism classes. Thus, as seen in the proof of Lemma~\ref{L:anti:7:isoparameters}, the isomorphism classes in $\cT_7$ form an $8$-parameter family and the isomorphism classes in $\cT_n$ are obtained by iterated $1$-dimensional central extensions of this family, as shown in Lemma~\ref{constructing-T_n+1}.

Let $r_n$ be the number of independent parameters of the family $\cT_n$. We have $r_7=8$ and $r_{n+1}=r_n+{n-1\choose 2}-2$, for all $n\ge 7$. Therefore, $r_n=\frac{(n-1)(n+1)(n-6)}{6}$. Thus, $\dim\left(\overline{\bigcup_{T\in\cT_n}O(T)}\right)=n^2-2+r_n=\frac{(n-2)(n^2+2n+3)}{6}$.
\end{proof}

Finally, we show that the family $\cT_n$ is generic and determine the dimension of the variety of complex $n$-dimensional anticommutative nilpotent algebras.

\begin{theorem}\label{Tn:generic}
For any $n\ge 6$, the family $\cT_n$ is generic in the variety of $n$-dimensional anticommutative nilpotent algebras. In particular, that variety has dimension $\frac{(n-2)(n^2+2n+3)}{6}$.
\end{theorem}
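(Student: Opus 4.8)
The dimension claim is immediate from the genericity of $\cT_n$ and the preceding Proposition, so it suffices to prove that $\cT_n$ is generic, i.e.\ that $\cT_n\to N$ for every $n$-dimensional anticommutative nilpotent algebra $N$. The argument follows the pattern of the proof of Theorem~\ref{T:Rn:generic}: by the reduction of Lemma~\ref{Nil^gm}, which plainly preserves anticommutativity, we may assume $N\in\nil_n^\gamma$, and we build the degeneration by induction on $n$, one dimension at a time. The base case is $n=6$: by Example~\ref{ex:anticomm:tab}, $\cT_6$ is, up to the permissible normalization $c_{35}^6=1$, the family $\{\cA(\af,1)\}_{\af\in\Co^*}$, which by~\cite{kkl19} is generic in the variety of $6$-dimensional anticommutative nilpotent algebras.

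For the inductive step, let $n\ge 6$, assume the result in dimension $n$, and take $N\in\nil_{n+1}^\gamma$ anticommutative. Then $e_{n+1}\in\ann N$, the quotient $N/\la e_{n+1}\ra$ is anticommutative and lies in $\nil_n^\gamma$, and the inductive hypothesis supplies a parametric basis $E_i(t)=\sum_{j=i}^n a_{ji}(t)e_j$ together with parameters $c_{ij}^k(t)$ satisfying Definition~\ref{T_n-defn} and realizing $\cT_n\to N/\la e_{n+1}\ra$. As in Theorem~\ref{T:Rn:generic}, since $e_n\in\ann$ and the $c_{ij}^k(t)$ obey~\eqref{g_ij=0-for-k<=max(ij)}, this degeneration is untouched by the off-diagonal coefficients $a_{n1}(t)$ and $a_{n2}(t)$, which are also irrelevant to the invertibility of the lower-triangular base-change matrix; these are the two coefficients we shall re-use. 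The defining conditions of $\cT_{n+1}$ force $c_{n-1,n}^{n+1}(t)=1$ and $c_{1,n-1}^{n+1}(t)=0=c_{2,n-1}^{n+1}(t)$, and by anticommutativity the system~\eqref{system-deg-k-is-n+1} amounts to one equation for each pair $1\le i<j\le n$. Mimicking the sweep of Theorem~\ref{T:Rn:generic} (using Lemma~\ref{a'_ii-in-terms-of-a_ij} to control the inverse base-change matrix), one first fixes $a_{n+1,n+1}(t)$ to force the equation for $(n-1,n)$:
\[
a_{n+1,n+1}(t):=\begin{cases}(\gm_{n-1,n}^{n+1})^{-1}a_{n-1,n-1}(t)a_{nn}(t), & \gm_{n-1,n}^{n+1}\ne 0,\\ t^{-1}a_{n-1,n-1}(t)a_{nn}(t), & \gm_{n-1,n}^{n+1}=0;\end{cases}
\]
the coefficients $a_{n+1,r}(t)$ with $1\le r\le n$ have no diagonal equation to serve and are set to $0$; one then sweeps inward over the remaining pairs, at each \emph{free} pair $(i,j)$ solving for $c_{ij}^{n+1}(t)$ by a recurrence of the shape~\eqref{c_ij^(n+1)=}, whose leading coefficient $a_{n+1,n+1}(t)^{-1}a_{ii}(t)a_{jj}(t)$ is nonzero; finally, since no structure constant is available at the forced pairs $(1,n-1)$ and $(2,n-1)$, one re-defines $a_{n1}(t)$ to force the equation for $(1,n-1)$ and $a_{n2}(t)$ to force that for $(2,n-1)$, exactly as $a_{n1}(t)$ is re-defined in Step~$\mathbf{n-1}$ of the proof of Theorem~\ref{T:Rn:generic}.

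Two points need extra attention relative to Theorem~\ref{T:Rn:generic}, and these are the main (purely combinatorial) obstacles. First, the ordering of the pairs and of the two final re-definitions must be arranged so that everything a given quantity depends on has already been produced: in particular the equations for $(1,n)$ and $(2,n)$ are solved (via $c_{1n}^{n+1}(t)$ and $c_{2n}^{n+1}(t)$) before $a_{n1}(t),a_{n2}(t)$ are re-defined, and one checks that the re-definition of $a_{n1}(t)$ involves neither $a_{n2}(t)$ nor any already-settled equation, and symmetrically for $a_{n2}(t)$ --- this is the anticommutative counterpart, with the two non-trivial generators $e_1,e_2$ of the chain $e_ie_{i+1}=e_{i+2}$, of the single re-definition of $a_{n1}(t)$ in Theorem~\ref{T:Rn:generic}. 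Second, the passage $n=6\to n+1=7$ carries the extra normalization $c_{46}^7=1$, which is not of the generic type above; there one first constructs, by the same method, a degeneration from the larger family $\hat\cT_7$ cut out by $c_{15}^7=0=c_{25}^7$ (with $c_{46}^7$ treated as an ordinary free parameter), and then invokes Lemma~\ref{L:anti:7:isoparameters}, by which each such algebra is isomorphic to a unique member of $\cT_7$, so that the two families have the same orbit closure. Granting these verifications the induction closes, $\cT_n$ is generic, and hence $\dim$ of the variety equals $\dim\left(\overline{\bigcup_{T\in\cT_n}O(T)}\right)=\frac{(n-2)(n^2+2n+3)}{6}$ by the preceding Proposition.
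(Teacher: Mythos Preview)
There is a genuine gap in your inductive step. You correctly note that the defining conditions of $\cT_{n+1}$ force $c_{n-1,n}^{n+1}(t)=1$ and $c_{1,n-1}^{n+1}(t)=c_{2,n-1}^{n+1}(t)=0$, but you have overlooked that the chain $e_ie_{i+1}=e_{i+2}$ also forces $c_{i,i+1}^{n+1}(t)=0$ for every $1\le i\le n-2$. Thus, for each of these $n-2$ consecutive pairs $(i,i+1)$ you have no free structure constant available to satisfy~\eqref{system-deg-k-is-n+1}, and your proposal provides no substitute mechanism: you explicitly set $a_{n+1,r}(t)=0$ for $1\le r\le n$ on the grounds that there is ``no diagonal equation to serve''. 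A quick degree-of-freedom count confirms the shortfall: there are $\binom{n}{2}$ equations to satisfy, while your scheme deploys only $\binom{n-1}{2}-2$ free $c_{ij}^{n+1}(t)$'s, the single $a_{n+1,n+1}(t)$, and the two re-definable $a_{n1}(t),a_{n2}(t)$, for a total of $\binom{n-1}{2}+1$ --- exactly $n-2$ short.

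The paper's proof fills precisely this gap: the coefficients $a_{n+1,i+2}(t)$ for $1\le i\le n-2$ are \emph{not} set to zero but are instead defined (via the formula~\eqref{a_(n+1_n+1)(t)=ac}, using Lemma~\ref{a'_ii-in-terms-of-a_ij}) so as to force~\eqref{system-deg-k-is-n+1} at the consecutive pair $(i,i+1)$. In other words, the $a_{n+1,r}(t)$ play in the anticommutative setting the role that the diagonal equations $e_i^2=e_{i+1}$ assign to them in Theorem~\ref{T:Rn:generic}: the relevant ``forced'' relations here are the off-diagonal ones $e_ie_{i+1}=e_{i+2}$, not the (trivially satisfied) diagonal ones. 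Once you restore this use of the $a_{n+1,i+2}(t)$, the rest of your outline --- the ordering of the sweep, the two re-definitions of $a_{n1}(t),a_{n2}(t)$ at the forced pairs $(1,n-1),(2,n-1)$, and the use of Lemma~\ref{L:anti:7:isoparameters} for the passage $6\to 7$ --- matches the paper's approach.
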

\begin{proof}
The proof is similar to that of Theorem~\ref{T:Rn:generic}.

For arbitrary $N\in\nil_n^\gamma$ we will prove by induction on $n$ that there is a parametric basis $E_i(t)=\sum_{j=i}^n a_{ji}(t)e_j$, with $a_{ji}(t)\in\Co(t)$, $1\le i\leq j\le n$, and a choice of structure constants $c_{ij}^k(t)\in\Co(t)$, satisfying the conditions of Definition~\ref{T_n-defn} and giving a degeneration of $N$ from $\cT_n$.

The base case $n=6$ has already been proved in Example~\ref{ex:anticomm:tab} and \cite{kkl19}. Denote by $\gm_{ij}^k$ the structure constants of $N$ in $(e_i)_{i=1}^{n+1}$. For the inductive step from $n$ to $n+1$, as in the proof of Theorem~\ref{T:Rn:generic}, it suffices to establish the convergence~\eqref{system-deg-k-is-n+1} by the appropriate choice of $c_{ij}^{n+1}(t)$ and $a_{n+1,i}(t)$. When $n+1=7$, we replace $c_{46}^7=1$ by the more general condition $c_{46}^7\ne 0$, which is permitted in view of Lemma~\ref{L:anti:7:isoparameters}. We may also redefine $a_{n1}(t)$ and $a_{n2}(t)$, as no degeneration from $\cT_n$ depends on these coefficients.

We will proceed in $n-1$ steps, from $k=1$ to $k=n-1$. At the end of {Step $\mathbf{k}$} we will have defined $c_{p,q}^{n+1}(t)$ for all $q>p\geq n-k$ and $a_{n+1,r}(t)$ for all $r\geq n-k+2$. We will also have obtained~\eqref{system-deg-k-is-n+1} for all $n\geq j>i\geq n-k$.
	
	\medskip
	
	\textbf{Step $\mathbf 1$.} Since we must have $c_{n-1,n}^{n+1}(t)=1$, it remains to define $a_{n+1,n+1}(t)$. The left-hand side of~\eqref{system-deg-k-is-n+1} for $i=n-1$ and $j=n$ becomes $\lim_{t\to 0}\left(a_{n-1,n-1}(t)a_{nn}(t)a_{n+1,n+1}(t)^{-1}\right)$, so we set
	\begin{align*}
		a_{n+1,n+1}(t):=
		\begin{cases}
			(\gm_{n-1,n}^{n+1})^{-1}a_{n-1,n-1}(t)a_{nn}(t), & \text{if $\gm_{n-1,n}^{n+1}\ne 0$,}\\
			t^{-1}a_{n-1,n-1}(t)a_{nn}(t), & \text{if $\gm_{n-1,n}^{n+1}=0$.}
		\end{cases}
	\end{align*}
By definition, \eqref{system-deg-k-is-n+1} holds for $i=n-1$ and $j=n$. Notice also that $a_{n1}(t),a_{n2}(t)$ do not occur in the formula above.

\medskip

\textbf{Step $\mathbf{k}$.} Let $2\leq k< n-2$ and assume that {Step $\mathbf{k-1}$} has been successfully completed and that $a_{n1}(t),a_{n2}(t)$ have not been used to define any new coefficients.

Suppose first that $n\geq j>i=n-k$ and $j\ne i+1$. We will define $c_{ij}^{n+1}(t)$ and establish~\eqref{system-deg-k-is-n+1} in this case. The coefficient of $c_{ij}^{n+1}(t)$ on the left-hand side of~\eqref{system-deg-k-is-n+1} equals $a_{n+1,n+1}(t)^{-1}a_{ii}(t)a_{jj}(t)$ which has already been defined and is non-zero. We thus put
	\begin{align}
		c_{ij}^{n+1}(t):=
		\begin{cases}
		    \frac{a_{n+1,n+1}(t)}{a_{ii}(t)a_{jj}(t)}\left(\gm_{ij}^{n+1}-\sum\limits_{r=2}^{n+1}a'_{n+1,r}(t)\sideset{}{'}\sum\limits_{p=i,q=j}^{r-1}c_{pq}^r(t)a_{pi}(t)a_{qj}(t)\right), & \text{if this is non-zero},\label{c_ij^(n+1)=ac}\\
		    \frac{t a_{n+1,n+1}(t)}{a_{ii}(t)a_{jj}(t)}, & \text{otherwise},
		\end{cases}
	\end{align}
where the primed sum is over all $(p,q)$ such that $(p,q,r)\ne(i,j,n+1)$. Note that on the right-hand side of~\eqref{c_ij^(n+1)=ac} we must have $n-k+1<j\leq r-1$, so $r\geq n-k+3$. Thence, by {Step $\mathbf{k-1}$} and Lemma~\ref{a'_ii-in-terms-of-a_ij}, all the terms of the form $a'_{n+1,r}(t)$ on the right-hand side of~\eqref{c_ij^(n+1)=ac} have already been defined. The same holds for all remaining terms except those of the form $c_{iq}^{n+1}(t)$ with $q> j$. Thus, \eqref{c_ij^(n+1)=ac} is a recurrence formula which defines $c_{ij}^{n+1}(t)$ in terms of $c_{iq}^{n+1}(t)$ with $q>j$. So, starting recursively with $c_{in}^{n+1}(t)$, we can define all of the terms $c_{n-k,q}^{n+1}(t)$, with $q>n-k+1$ and by doing so we force the convergence~\eqref{system-deg-k-is-n+1} for all $j>n-k+1$ and $i=n-k$. This will work because we are assuming that $k<n-2$ so $(n-k,q)\neq (1,n-1)$, $(2,n-1)$. Moreover, also by that assumption on $k$, the coefficients $a_{n1}(t),a_{n2}(t)$ have not been used in~\eqref{c_ij^(n+1)=ac} to define $c_{ij}^{n+1}(t)$, as $j>i = n-k\geq 3$. Hence, given that $c_{n-k,n-k+1}^{n+1}(t)=0$ ($k>1$), all $c_{p,q}^{n+1}(t)$ with $q>p\geq n-k$ are defined and~\eqref{system-deg-k-is-n+1} holds for all $j>i\geq n-k$ except if $i=n-k$ and $j=n-k+1$.

Next, we will define $a_{n+1,n+2-k}(t)$ so that~\eqref{system-deg-k-is-n+1} holds for $i=n-k$ and $j=n-k+1$. Assume thus that $i=n-k$ and $j=n-k+1$. Using Lemma~\ref{a'_ii-in-terms-of-a_ij} we have
{\tiny\begin{align*}
	\sum_{r=2}^{n+1}\sum_{p=i,q=i+1}^{r-1}a'_{n+1,r}(t)c_{pq}^r(t)a_{pi}(t)a_{q,i+1}(t)&=a'_{n+1,i+2}(t)a_{ii}(t)a_{i+1,i+1}(t)+\sum_{r=i+3}^{n+1}a'_{n+1,r}(t)\sum_{p=i,q=i+1}^{r-1}c_{pq}^r(t)a_{pi}(t)a_{q,i+1}(t)\\
	&=-a_{n+1,n+1}(t)^{-1}a_{i+2,i+2}(t)^{-1}a_{ii}(t)a_{i+1,i+1}(t)a_{n+1,i+2}(t)\\
	&\quad-a_{i+2,i+2}(t)^{-1}a_{ii}(t)a_{i+1,i+1}(t)\sum_{s=i+3}^n a'_{n+1,s}(t)a_{s,i+2}(t)\\
	&\quad+\sum_{r=i+3}^{n+1}a'_{n+1,r}(t)\sum_{p=i,q=i+1}^{r-1}c_{pq}^r(t)a_{pi}(t)a_{q,i+1}(t).
	\end{align*}}
	Hence, we put
	\begin{align}
	a_{n+1,i+2}(t):=&-\frac{a_{n+1,n+1}(t)a_{i+2,i+2}(t)}{a_{ii}(t)a_{i+1,i+1}(t)}\left(\gm_{i,i+1}^{n+1}-\sum_{r=i+3}^{n+1}a'_{n+1,r}(t)\sum_{p=i,q=i+1}^{r-1}c_{pq}^r(t)a_{pi}(t)a_{q,i+1}(t)\right)\notag\\
	&\quad-a_{n+1,n+1}(t)\sum_{s=i+3}^{n}a'_{n+1,s}(t)a_{s,i+2}(t),\label{a_(n+1_n+1)(t)=ac}
	\end{align}
where the right-hand side defines $a_{n+1,n-k+2}(t)$ in terms of $a_{n+1,r}(t)$ with $r\geq n-k+3$ (already defined in the previous steps) and $c_{p,q}^{n+1}(t)$ with $q>p\geq n-k$ (defined above). Also, \eqref{system-deg-k-is-n+1} holds for $i=n-k$ and $j=n-k+1$  and $a_{n1}(t),a_{n2}(t)$ do not occur in the definition~\eqref{a_(n+1_n+1)(t)=ac} above. This step is thus finished.

\medskip

\textbf{Step $\mathbf{n-2}$.} When we reach this step, all $c_{p,q}^{n+1}(t)$ with $q>p\geq 3$ and all $a_{n+1,r}(t)$ with $r\geq 5$ have been defined without using the coefficients $a_{n1}(t), a_{n2}(t)$ and~\eqref{system-deg-k-is-n+1} has been shown to hold for all $j>i\geq 3$. 

Consider first the case $(i,j)=(2,n)$. Then, from the right-hand side of~\eqref{system-deg-k-is-n+1}, we get
\begin{align*}
    \sum_{r=2}^{n+1}a'_{n+1,r}(t)\sum_{p=i,q=j}^{r-1}c_{pq}^r(t)a_{pi}(t)a_{qj}(t)&=a'_{n+1,n+1}(t)a_{nn}(t)\sum_{p=2}^n c_{pn}^{n+1}(t)a_{p2}(t)\\
    &=a_{n+1,n+1}(t)^{-1}a_{nn}(t)\sum_{p=2}^{n-1} c_{pn}^{n+1}(t)a_{p2}(t),
\end{align*}
so we put
\begin{align*}
    c_{2n}^{n+1}(t):=a_{22}(t)^{-1}a_{n+1,n+1}(t)a_{nn}(t)^{-1}\gm_{2n}^{n+1}-a_{22}(t)^{-1}\sum_{p=3}^{n-1} c_{pn}^{n+1}(t)a_{p2}(t),
\end{align*}
in which the right-hand side has already been defined in the previous steps and does not depend on $a_{n1}(t)$ or $a_{n2}(t)$.

Now let $(i,j)=(2,n-1)$. We will redefine $a_{n2}(t)$ at this point. This is necessary because we are bound to having $c^{n+1}_{2,n-1}(t)=0$, so we cannot force~\eqref{system-deg-k-is-n+1} in the case $(i,j)=(2,n-1)$ by choosing $c^{n+1}_{2,n-1}(t)$ as we please. 
We have 
	\begin{align*}
		\sum_{r=2}^{n+1}\sum_{p=i,q=j}^{r-1}a'_{n+1,r}(t)c_{pq}^r(t)a_{pi}(t)a_{qj}(t)&=a_{n+1,n+1}(t)^{-1}a_{n-1,n-1}(t)\sum_{p=3}^{n}c_{p,n-1}^{n+1}(t)a_{p2}(t)\\
		&\quad+a_{n+1,n+1}(t)^{-1}a_{n,n-1}(t)\sum_{p=2}^{n-1}c_{p,n}^{n+1}(t)a_{p2}(t)\\
		&\quad+a'_{n+1,n}(t) a_{n-1,n-1}(t)\sum_{p=2}^{n-1}c_{p,n-1}^{n}(t)a_{p2}(t),
	\end{align*}
	in which $a'_{n+1,n}(t)=-a_{n+1,n+1}^{-1}(t)a_{nn}^{-1}(t)a_{n+1,n}(t)$ has already been defined and the coefficient of $a_{n2}(t)$ is $-a_{n+1,n+1}(t)^{-1}a_{n-1,n-1}(t)\ne 0$. Hence we set
	\begin{align*}
		a_{n2}(t)&:=-\frac{a_{n+1,n+1}(t)\gm_{2,n-1}^{n+1}}{a_{n-1,n-1}(t)}+\sum_{p=3}^{n-1}c_{p,n-1}^{n+1}(t)a_{p2}(t)+\frac{a_{n,n-1}(t)}{a_{n-1,n-1}(t)}\sum_{p=2}^{n-1}c_{p,n}^{n+1}(t)a_{p2}(t)\\
		&\quad-\frac{a_{n+1,n}(t)}{a_{nn}(t)}\sum_{p=2}^{n-1}c_{p,n-1}^{n}(t)a_{p2}(t),
	\end{align*}
the right-hand side of which has already been defined and does not involve $a_{n1}(t)$ or $a_{n2}(t)$. 

Now we can proceed as in the previous (generic) step with $k=n-2$ defining $c_{2q}^{n+1}(t)$ for $n-2\geq q\geq 3$ and finally $a_{n+1,4}(t)$, ensuring that~\eqref{system-deg-k-is-n+1} holds in the remaining cases. 

\medskip

\textbf{Step $\mathbf{n-1}$.} This step is totally analogous to the previous one. We define $c_{1n}^{n+1}(t)$, then redefine $a_{n1}(t)$ and after that find $c_{1q}^{n+1}(t)$ for $n-2\geq q\geq 2$ and finally $a_{n+1,3}(t)$, ensuring that~\eqref{system-deg-k-is-n+1} holds in the remaining cases. The coefficients $a_{n+1,1}(t)$ and $a_{n+1,2}(t)$ are unrestrained and can be chosen arbitrarily (which agrees with our previous observations).
\end{proof}

\section{Corollaries, an open question and a conjecture}\label{S:final}

\subsection{Nilpotency index}
Recall from Subsection~\ref{SS:nilpalgs} that the nilpotency index of a nilpotent algebra $\cA$ is the smallest positive $k$ such that $\cA^k=0$. The nilpotency index of an $n$-dimensional nilpotent algebra is not greater than $2^{n-1}+1$ but, in general, this upper bound can be attained. On the other hand, for $n$-dimensional nilpotent Lie algebras, it is known that the upper bound for the nilpotency index is $n-1$, while for $n$-dimensional nilpotent associative, Leibniz and Zinbiel algebras, the upper bound on the nilpotency index is $n$.

Thanks to our Theorem~\ref{Tn:generic}, we know that each $n$-dimensional nilpotent anticommutative algebra is a degeneration from the generic family $\cT_n$. This implies the following result.

\begin{corollary}\label{coranti}
The nilpotency index of  an $n$-dimensional nilpotent anticommutative algebra is at most $F_n+1$, where $F_n$ is the $n^{\text{th}}$ Fibonacci number. This bound is sharp and it is attained by the algebras from the generic family 
$\cT_n$ given in Theorem~\ref{Tn:generic} (see Definition~\ref{T_n-defn}).
\end{corollary}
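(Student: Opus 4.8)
The plan is to establish the upper bound first and then the sharpness. For the upper bound, let $\cA$ be an $n$-dimensional nilpotent anticommutative algebra. By Theorem~\ref{Tn:generic}, $\cA$ is a degeneration from the generic family $\cT_n$, so $\cA\in\overline{\bigcup_{T\in\cT_n}O(T)}$. The key observation is that the nilpotency index is upper semicontinuous in the sense that $\cA^k=0$ is a Zariski-closed condition: if $\cA$ is a degeneration of algebras $T$ all satisfying $T^{k}=0$, then $\cA^k=0$ as well. Hence it suffices to bound the nilpotency index of a single generic algebra $T\in\cT_n$. So the first step is to compute $T^k$ for $T\in\cT_n$ from the multiplication table in Definition~\ref{T_n-defn}: we have $e_ie_{i+1}=e_{i+2}$ for all $1\le i\le n-2$, together with the listed vanishing conditions and the free parameters. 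I would track which basis vectors $e_m$ can appear in $T^k$; since $e_ie_{i+1}=e_{i+2}$ produces $e_{i+2}$ from $e_i$ and $e_{i+1}$, the products behave like a Fibonacci-type recursion, and I expect that the first index that can be reached in $T^k$ grows like the Fibonacci numbers, so that $T^{F_n+1}=0$ while $T^{F_n}\ne 0$.

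More precisely, for the sharpness I would exhibit (or identify within $\cT_n$) a specific algebra achieving index $F_n+1$. Setting all free structure constants to zero leaves the ``Fibonacci algebra'' with relations $e_ie_{i+1}=e_{i+2}$ (plus the forced $c_{13}^5$, $c_{35}^6$, $c_{46}^7$, which are nonzero but compatible), and one checks by an explicit induction that $e_n$ lies in $\cA^{F_n}$ but $\cA^{F_n+1}=0$. The induction hypothesis should be a precise statement of the form: $e_m\in\cA^{k}\setminus\cA^{k+1}$, where $k=k(m)$ satisfies $k(i+2)=k(i)+k(i+1)$ with appropriate initial values tuned so that $k(n)=F_n$. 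I would verify the base cases for small $m$ directly and then propagate using $e_ie_{i+1}=e_{i+2}$ together with the fact that any product landing in $e_{i+2}$ must, by~\eqref{g_ij=0-for-k<=max(ij)} and the specific table, decompose through lower-indexed generators in a way that forces the minimal length to be exactly $k(i)+k(i+1)$.

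The main obstacle I anticipate is the lower bound $\cA^{F_n+1}\ne 0$ rather than the upper bound: showing that no shorter bracketing of basis elements collapses $e_n$ to zero requires care, because the algebra is non-associative and $\cA^k$ is defined by $\cA^{k+1}=\sum_{i=1}^k \cA^i\cA^{k+1-i}$, so one must rule out all ways of obtaining $e_n$ as a product of shorter pieces, not merely the ``left-combed'' ones. The cleanest route is probably to assign to each basis vector $e_m$ a weight $w(m)$ (a Fibonacci-like weight with $w(i+2)=w(i)+w(i+1)$ on the relevant range) and observe that under the multiplication $w$ is super-additive: $e_pe_q\ne 0$ forces the target weight to be at least $w(p)+w(q)$. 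This gives $\cA^k\subseteq\la e_m : w(m)\ge k\ra$ immediately, hence $\cA^{F_n+1}=0$; and the explicit ladder $e_1,e_2,e_3=e_1e_2,e_4=e_2e_3,\dots$ realizes the extremal weight, giving $e_n\in\cA^{F_n}$ and so $\cA^{F_n}\ne 0$. The upper bound for arbitrary $\cA\in\cT_n$ then follows because the same weight argument applies verbatim (the free parameters only contribute products $e_ie_j=\sum_{m>\max\{i,j\}}c_{ij}^me_m$, and one checks $w(m)\ge w(i)+w(j)$ for all $m$ that can occur), and the degeneration-closedness extends it to all of $\nil_n$'s anticommutative part.
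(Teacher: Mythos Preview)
Your approach is the paper's approach: the paper simply invokes Theorem~\ref{Tn:generic} to say every $n$-dimensional nilpotent anticommutative algebra degenerates from $\cT_n$, and leaves the computation of the nilpotency index of $\cT_n$ implicit. Your Fibonacci-weight argument, setting $w(m)=F_m$ and checking that $c_{ij}^m\ne 0$ with $i<j$ forces $F_m\ge F_{j+1}=F_j+F_{j-1}\ge F_j+F_i$, is exactly the missing detail and is correct; together with the ladder $e_{m-2}e_{m-1}=e_m$ (which holds \emph{exactly} in $\cT_n$, so $e_n\in\cA^{F_n}$ with no extra terms to worry about), this gives both bounds cleanly.

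One small slip: where you write ``the lower bound $\cA^{F_n+1}\ne 0$'' you mean $\cA^{F_n}\ne 0$, and your stated worry about ``no shorter bracketing collapsing $e_n$ to zero'' is not actually relevant to either direction---the ladder already exhibits $e_n$ as a length-$F_n$ product, and shorter bracketings would only place $e_n$ in a \emph{larger} power $\cA^k\supseteq\cA^{F_n}$, which is harmless.
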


\subsection{Length of algebras}
Let $\cA$ be a finite-dimensional algebra and $S$ be a finite subset of $\cA$. We define the length function of $S$ as follows (see~\cite{guterman}). 
Any product (with any choice of bracketing) of a finite number of elements of $S$ is a word in $S$, the number of letters (i.e.\ elements of $S$) in the product being its length. For $i\geq 1$, the set of all words in $S$ having length less than or equal to $i$ is denoted by $S^i$. Then set $\mathcal L_i(S) = \langle  S^i \rangle$, the linear span of $S^i$, and $\mathcal L(S)=\bigcup\limits_{i=1}^\infty \mathcal L_i(S)$. 
\begin{definition}\label{sys_len} 
Assume that $S$ is a finite generating set for the finite-dimensional algebra $\cA$. Then the length of $S$ is defined as $l(S)=\min\{i\geq 1\mid \mathcal L_i(S)=\cA\}$. The length of $\cA$ is 
\begin{equation*}
l(\cA)=\sup\limits \{l(S)\mid S\subseteq\cA\ \mbox{finite and\ } \mathcal L(S)=\mathcal{A}\}. 
\end{equation*}
\end{definition}

The length of the associative algebra of matrices of size $3$ was first discussed in~\cite{SpeR60} in the context of the mechanics of isotropic continua. The more general problem for the algebra of matrices of size $n$ was posed in \cite{Paz84} but is still open
(recently, some interesting new results about this problem are given by Shitov~\cite{shitov}). The known upper bounds for the length of the matrix algebra of size $n$ are in general nonlinear in $n$.

For our main corollary, we need the following key lemma.
\begin{lemma}\label{lemmanti}
Let $\cA$ be an $n$-dimensional anticommutative algebra of length $k$. Then there is an $n$-dimensional nilpotent anticommutative algebra with nilpotency index $k+1$.
\end{lemma}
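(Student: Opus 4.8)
The plan is to realize the required algebra as the graded algebra associated to $\cA$ with respect to the word-length filtration coming from a generating set that \emph{attains} the length. First I would note that the supremum in Definition~\ref{sys_len} is a maximum: by hypothesis the set $T=\{l(S)\mid S\subseteq\cA\text{ finite},\ \mathcal{L}(S)=\cA\}$ is a nonempty set of positive integers with $\sup T=k<\infty$, hence $T\subseteq\{1,\dots,k\}$ is finite and $k=\max T\in T$. So I may fix a finite generating set $S=\{s_1,\dots,s_m\}$ of $\cA$ with $l(S)=k$, and set $V_i=\mathcal{L}_i(S)$ for $i\ge 1$, with the convention $V_0=0$. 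Then $V_1\subseteq V_2\subseteq\cdots$, $V_k=\cA$, $V_{k-1}\ne\cA$, and the basic multiplicative property $V_iV_j\subseteq V_{i+j}$ holds for all $i,j$, because the product of a word of length $\le i$ and a word of length $\le j$ is a word of length $\le i+j$.

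Next I would build $\mathcal{N}=\bigoplus_{i=1}^{k}\mathcal{N}_i$ with $\mathcal{N}_i=V_i/V_{i-1}$, a vector space of dimension $\sum_{i=1}^{k}\bigl(\dim V_i-\dim V_{i-1}\bigr)=\dim V_k=n$. On homogeneous components I would define $\bar a\cdot\bar b:=\overline{ab}\in V_{i+j}/V_{i+j-1}$ for $a\in V_i$, $b\in V_j$, read as $0$ when $i+j>k$. The inclusion $V_{i-1}V_j+V_iV_{j-1}\subseteq V_{i+j-1}$ makes this well defined on cosets, and extending bilinearly makes $\mathcal{N}$ an algebra with $\mathcal{N}_i\mathcal{N}_j\subseteq\mathcal{N}_{i+j}$ (setting $\mathcal{N}_m=0$ for $m>k$); anticommutativity of $\cA$ passes to $\mathcal{N}$ since $\overline{ab}=-\overline{ba}$.

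Then I would verify that $\mathcal{N}$ is nilpotent of index exactly $k+1$. A straightforward induction on $j$ using $\mathcal{N}_i\mathcal{N}_j\subseteq\mathcal{N}_{i+j}$ gives $\mathcal{N}^{j}\subseteq\bigoplus_{m\ge j}\mathcal{N}_m$, so $\mathcal{N}^{k+1}=0$. For the reverse bound I would use $l(S)=k$: since $V_k\ne V_{k-1}$ while every word in $S$ of length $\le k-1$ lies in $V_{k-1}$, there must exist $s_{j_1},\dots,s_{j_k}\in S$ and a bracketing of $s_{j_1}\cdots s_{j_k}$ whose value $w\in V_k$ does not lie in $V_{k-1}$, i.e.\ $\bar w\ne 0$ in $\mathcal{N}_k$. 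Computing the $\mathcal{N}$-product of $\bar s_{j_1},\dots,\bar s_{j_k}\in\mathcal{N}_1=V_1$ with the same bracketing, an induction over the bracketing tree (using the definition of the product of $\mathcal{N}$) identifies it with $\bar w$; this is a nonzero product of $k$ elements of $\mathcal{N}$, so $\mathcal{N}^k\ne 0$. Hence $\mathcal{N}$ is an $n$-dimensional nilpotent anticommutative algebra of nilpotency index $k+1$.

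I do not expect any single step to be genuinely difficult; the points demanding care are checking that the product on $\mathcal{N}$ is well defined on cosets and, above all, proving the lower bound $\mathcal{N}^k\ne 0$, where it is essential that $S$ was chosen with $l(S)=k$ exactly (an arbitrary generating set would only produce nilpotency index $l(S)+1\le k+1$). Combined with Corollary~\ref{coranti}, Lemma~\ref{lemmanti} then immediately yields that the length of an arbitrary $n$-dimensional anticommutative algebra is at most $F_n$.
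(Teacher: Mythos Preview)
Your proof is correct and follows essentially the same idea as the paper: both construct the associated graded algebra of $\cA$ with respect to the word-length filtration $\mathcal{L}_1(S)\subseteq\mathcal{L}_2(S)\subseteq\cdots\subseteq\mathcal{L}_k(S)=\cA$ coming from a generating set $S$ with $l(S)=k$. The only cosmetic difference is that the paper realizes this graded algebra on the same underlying vector space by choosing complements $K_R$ with $\mathcal{L}_R(S)=K_R\oplus\mathcal{L}_{R-1}(S)$ and projecting products onto $K_R$, whereas you work directly with the quotients $\mathcal{N}_i=V_i/V_{i-1}$; the two constructions are canonically isomorphic, and your formulation is arguably the cleaner one for checking well-definedness and the exact nilpotency index.
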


\begin{Proof}
Let $S=\{a_1, \ldots, a_t\}$ be a generating set of $\mathcal{A}$ such that $l(\cA)=k=l(S)$.
Our idea for the construction of an $n$-dimensional nilpotent anticommutative algebra $(\cB, \star)$ with nilpotency index $k+1$ is based on a reduction of the multiplication of $\mathcal{A}$ to a nilpotent case. The reduction of the multiplication of $\mathcal{A}$ is given in the following $k+1$ steps.

\medskip

\textbf{Step $\mathbf 1$.}
We consider an algebra $\cB$ with the same underlying vector space as $\cA$.

\medskip

\textbf{Step $\mathbf 2$.}
Fix a complement $K_2$ for $\mathcal L_1(S)$ in $\mathcal L_2(S)$, so that $\mathcal L_2(S)=K_2\oplus \mathcal L_1(S)$. For each pair $a, b$ of elements from $\mathcal L_1(S)$, the product 
$ab$ can be written as $ab= \ell+\ell^*$, where $\ell \in K_2$ and $\ell^* \in \mathcal L_1(S)$. Then set $a \star b =\ell$.

\medskip

\textbf{Step $\mathbf R$.} Let $3\leq R \leq k$.
Fix a complement $K_R$ for $\mathcal L_{R-1}(S)$ in $\mathcal L_R(S)$, so that $\mathcal L_R(S)=K_R\oplus \mathcal L_{R-1}(S)$. For each pair $a,b$ of elements  $a \in \mathcal L_{R-q}(S) \setminus  \mathcal L_{R-q-1}(S)$ and $b \in \mathcal L_{q}(S) \setminus  \mathcal L_{q-1}(S)$, where $1\leq q<R$, the product $ab$ can be written as 
$ab= \ell+\ell^*$, where $\ell \in K_R$ and $\ell^* \in \mathcal L_{R-1}(S)$. Then set $a \star b =\ell$.

\medskip

\textbf{Step $\mathbf{k+1}$.}
The remaining multiplications are zero.

\medskip

By construction, $(\cB, \star)$ is an $n$-dimensional nilpotent anticommutative algebra of length $k$ and nilpotency index $k+1$.
\end{Proof}

Combining Corollary~\ref{coranti} and Lemma~\ref{lemmanti} gives the following corollary.

\begin{corollary}\label{C:final:length}
The length of an $n$-dimensional  anticommutative algebra is bounded above by $F_n$, the $n^{\text{th}}$ Fibonacci number. 
This bound is sharp and it is reached by the algebras from the generic family $\cT_n$ given in Theorem~\ref{Tn:generic} (see Definition~\ref{T_n-defn}).
\end{corollary}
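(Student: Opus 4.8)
The plan is to obtain the bound from Corollary~\ref{coranti} and Lemma~\ref{lemmanti}, and to prove sharpness by producing an explicit extremal algebra and computing its length. For the upper bound, let $\cA$ be an arbitrary $n$-dimensional anticommutative algebra and put $k=l(\cA)$. By Lemma~\ref{lemmanti} there is an $n$-dimensional nilpotent anticommutative algebra whose nilpotency index equals $k+1$, and Corollary~\ref{coranti} then forces $k+1\le F_n+1$; hence $l(\cA)\le F_n$.

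For sharpness, I would use the algebra $\cN$ with basis $(e_i)_{i=1}^n$ and multiplication $e_ie_{i+1}=e_{i+2}$ for $1\le i\le n-2$, all other products of basis vectors being zero. This $\cN$ is anticommutative and satisfies~\eqref{g_ij=0-for-k<=max(ij)}, hence is nilpotent; since it is an $n$-dimensional anticommutative nilpotent algebra, $\cT_n\to\cN$ by Theorem~\ref{Tn:generic}, so $\cN$ is among the algebras obtained from the generic family $\cT_n$. The crucial structural feature is that $\cN$ carries an $\N$-grading with $\deg e_k=F_k$: as $F_{k-2}+F_{k-1}=F_k$, the defining relations are homogeneous, so $\cN=\bigoplus_{d\ge 1}\cN_d$ with $\cN_d=\la e_k:F_k=d\ra$ (in particular $\cN_1=\la e_1,e_2\ra$), and $\cN$ is generated by $\cN_1$. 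Set $S=\{e_1,e_2\}$. Every word in $S$ of length $\ell$ is homogeneous of degree $\ell$ and so lies in $\cN_\ell$; conversely $e_k=e_{k-2}e_{k-1}$ exhibits, inductively, $e_k$ as a word in $S$ of length $F_k$. Therefore $\mathcal L_i(S)=\bigoplus_{d\le i}\cN_d$, so $e_n\notin\mathcal L_{F_n-1}(S)$ while $\mathcal L_{F_n}(S)=\cN$; that is, $l(S)=F_n$. Together with the upper bound this gives $l(\cN)=F_n$, which proves the sharpness of the bound and the final claim of the corollary.

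The routine part is the upper bound; the main obstacle is the sharpness argument, specifically ruling out ``shortcut'' bracketings. One must check that $\cN$ genuinely admits the grading above and is generated in degree $1$, so that no rebracketed product of elements of $S$ can produce $e_k$ from fewer than $F_k$ letters and the filtration $\mathcal L_i(\{e_1,e_2\})$ grows exactly as $\bigoplus_{d\le i}\cN_d$. (As a byproduct one also recovers $\cN^{F_n+1}=0$ and $e_n\in\cN^{F_n}$, so $\cN$ realizes the nilpotency-index bound of Corollary~\ref{coranti}.)
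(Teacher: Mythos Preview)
Your upper-bound argument is exactly the paper's: combine Lemma~\ref{lemmanti} with Corollary~\ref{coranti}. For sharpness you go further than the paper (whose proof is a one-line citation) by exhibiting the explicit algebra $\cN$ with $e_ie_{i+1}=e_{i+2}$ and computing $l(\{e_1,e_2\})=F_n$ via the Fibonacci grading $\deg e_k=F_k$. That computation is correct and is a clean way to realize the bound.

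One point deserves comment. Your extremal algebra $\cN$ is \emph{not} a member of $\cT_n$: Definition~\ref{T_n-defn} requires $c_{13}^5\neq 0$ (and $c_{35}^6=1$, $c_{46}^7=1$), whereas in $\cN$ all off-band products vanish. Saying that $\cT_n\to\cN$ makes $\cN$ ``among the algebras obtained from the generic family'' stretches the wording of the corollary, which asserts the bound is reached by the algebras \emph{from} $\cT_n$. Your argument certainly establishes sharpness of the bound $F_n$; what it does not directly show is that a generic $\cA\in\cT_n$ itself has $l(\cA)=F_n$. (Indeed, for $\cA\in\cT_n$ the Fibonacci grading breaks---for instance $e_1e_3=c_{13}^5e_5$ is not homogeneous---so your grading method does not transfer, and for the natural generating set $\{e_1,e_2\}$ one can reach $e_n$ with far fewer than $F_n$ letters.) The paper's own proof is equally terse on this point, simply invoking Corollary~\ref{coranti} and Lemma~\ref{lemmanti}, so you are not missing anything the paper supplies; but you should be aware that the jump from ``nilpotency index $F_n+1$'' to ``length $F_n$'' for $\cT_n$-algebras is not the trivial implication your degeneration remark suggests.
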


\subsection{Open question and conjecture}
The study of $n$-ary algebras is an interesting topic which has seen good developments recently.
There are many different generalizations to the $n$-ary case of the commutative and anticommutative properties.
Let us give a more general definition below.

\begin{definition}
Let $\mathcal{N}$ be an $n$-ary algebra with multiplication $[ \cdot, \ldots, \cdot]$.
Given two disjoint subsets $A$ and $C$ of $\{1, \ldots ,n\}$,
we say that $\mathcal{N}$ is an $(A,C)$-commutative $n$-ary algebra if the following holds:
\begin{enumerate}
\item the multiplication $[ x_1, \ldots, x_n]$ is anticommutative on elements indexed by $A$,
\item the multiplication $[ x_1,  \ldots, x_n]$ is commutative on elements indexed by $C$.
\end{enumerate}\end{definition}

The main examples of 
$(A,C)$-commutative $n$-ary algebras
are:
\begin{itemize}
    \item Lie triple systems and  Tortkara triple systems ($( \{1,2\}, \emptyset )$-commutative $3$-ary algebras), 
    \item anti-Jordan triple systems  \cite{hader} and  algebraic $N = 6$ $3$-algebras \cite{cantarini} ($( \{1,3\}, \emptyset )$-commutative $3$-ary algebras),
       \item   algebraic $N = 5$ $3$-algebras \cite{cantarini} ($( \emptyset, \{1,2\} )$-commutative $3$-ary algebras),
    \item Jordan quadruple systems  \cite{bremner}
($( \emptyset, \{1,4\})$-commutative $4$-ary algebras and also $( \emptyset, \{2,3\})$-commutative $4$-ary algebras),
    \item commutative (resp.\ anticommutative) $n$-ary algebras
($(\emptyset, \{1, \ldots, n\} )$-commutative (resp.\ $( \{1, \ldots, n\}, \emptyset )$-commutative) $n$-ary algebras).
\end{itemize}
The special case of $(\{1,\ldots, a\},\{n-c+1, \ldots, n\})$-commutative $n$-ary algebras
we will called $(a,c)$-commutative $n$-ary algebras.

The geometric study of varieties of $n$-ary algebras defined by a family of polynomial  identites has been started in \cite{kv20}.
Hence, we have an obvious open question.

\begin{question}
It is clear that the variety of $k$-dimensional nilpotent 
 $(A,C)$-commutative $n$-ary   algebras is irreducible.
What is the geometric dimension of this variety?
\end{question}

In order to formulate our conjecture concerning a bound on the length of $k$-dimensional $(A,C)$-commutative $n$-ary algebras we need to introduce the $N$-generated Fibonacci numbers.
\begin{definition}
Let $N=p_1^{a_1}\cdots p_r^{a_r}$ be the prime decomposition of $N$, where $p_r$ denotes the $r^{\text{th}}$ prime number.
We can define the $N$-generated Fibonacci number $F_{N}(n)$ recursively as
\begin{center}
$F_{N}(n)=a_{1}F_{N}(n-1)+a_{2}F_{N}(n-2)+\cdots+a_{r}F_{N}(n-r)$,
\end{center}
where $F_N(n)=1$ if $n \leq r$. 
\end{definition}

\begin{conjecture}
Let  $\mathcal{N}$ be a $k$-dimensional $(A,C)$-commutative $n$-ary algebra.
Then the length of $\mathcal{N}$ is at most 
$F_{2^{n-a+1}\cdot 3 \cdots p_a}(k)$,
where $a=|A|,$
if $|A|>1$ and $a=1,$ if $|A|=0.$
\end{conjecture}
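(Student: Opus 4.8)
The plan is to follow the same two-step strategy that yields Corollary~\ref{C:final:length} in the binary anticommutative case, now carried out in the $n$-ary $(A,C)$-commutative setting. Writing $N=2^{n-a+1}\cdot 3\cdots p_a$, the conjecture will follow once we establish the two $n$-ary analogues of Lemma~\ref{lemmanti} and Corollary~\ref{coranti}: namely, (A) that every $k$-dimensional nilpotent $(A,C)$-commutative $n$-ary algebra has nilpotency index at most $F_N(k)+1$, and (B) that a $k$-dimensional $(A,C)$-commutative $n$-ary algebra of length $\ell$ produces a $k$-dimensional nilpotent $(A,C)$-commutative $n$-ary algebra of nilpotency index $\ell+1$. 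Granting both, any generating set $S$ realizing the length gives $\ell+1\le F_N(k)+1$, whence $l(\mathcal{N})\le F_N(k)$.

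Part (B) is the routine half: I would reproduce the construction in the proof of Lemma~\ref{lemmanti} verbatim, defining the truncated multiplication $[a_1,\dots,a_n]_\star$ to be the top graded component of $[a_1,\dots,a_n]$ relative to the filtration $\mathcal L_1(S)\subseteq\mathcal L_2(S)\subseteq\cdots$. Since this filtration depends only on $S$, permuting the arguments of a product leaves its weight unchanged; hence the projection onto the top graded component is compatible with the $(A,C)$-symmetry, and $[\cdot,\dots,\cdot]_\star$ is again $(A,C)$-commutative, nilpotent of the same dimension, and of nilpotency index $\ell+1$ by the same step-by-step argument. No new idea beyond Lemma~\ref{lemmanti} is needed here.

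The heart of the matter is (A), and here I would argue combinatorially via a weight filtration rather than appeal to a geometric classification. Choose a basis $e_1,\dots,e_k$ adapted to the nilpotency series $\mathcal N=\mathcal N^1\supseteq\mathcal N^2\supseteq\cdots$, and assign to $e_i$ the weight $w_i$ equal to the largest $s$ with $e_i\in\mathcal N^s$; the nilpotency index equals $1+\max_i w_i$. Every basis vector of weight $>1$ occurs in a nonzero $n$-ary product of strictly lower-weight vectors, and anticommutativity forces the $a=|A|$ entries placed in the $A$-slots to be pairwise distinct, while the remaining $n-a$ entries (commutative or unconstrained) may be repeated freely. The fastest possible growth is therefore realized by the chain $e_m=[e_{m-1},\dots,e_{m-1},e_{m-1},e_{m-2},\dots,e_{m-a}]$, with the $a$ distinct vectors $e_{m-1},\dots,e_{m-a}$ in the $A$-slots and $e_{m-1}$ filling the other $n-a$ slots, yielding the recursion
\begin{equation*}
w_m=(n-a+1)\,w_{m-1}+w_{m-2}+\cdots+w_{m-a}.
\end{equation*}
This is precisely the $N$-generated Fibonacci recursion for $N=2^{n-a+1}\cdot3\cdots p_a$, whose exponents are $n-a+1$ on the prime $2$ and $1$ on each of $3,\dots,p_a$, so the extremal weight is $F_N(k)$ and the nilpotency index is $F_N(k)+1$. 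The degenerate conventions ($a=1$ when $|A|\le 1$, where antisymmetry on a single slot is vacuous) reduce to $N=2^n$ and the exponential bound $F_N(k)=n^{k-1}$, consistent with the unconstrained case.

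The main obstacle is to make the phrase ``fastest possible growth'' into a rigorous extremality statement: one must show that among all admissible generation schemes on $k$ vectors---directed acyclic configurations in which each vector of weight $>1$ is an $n$-ary product of earlier vectors with $a$ pairwise-distinct entries in the $A$-slots---the maximum attainable weight is bounded by $F_N(k)$, with equality for the chain above. I expect this to require an exchange/rearrangement argument showing that raising any $A$-slot entry to the next-highest available distinct vector, and each free slot to the current top vector, never decreases the top weight, together with an induction on $k$ reducing to the same recursion. An alternative, closer to the route taken for Corollary~\ref{coranti}, would be to extend the Skjelbred--Sund degeneration machinery of Section~\ref{SS:nilpotent algebras} to $n$-ary $(A,C)$-commutative algebras: irreducibility of the variety is clear (Proposition~\ref{nilp} generalizes directly), so it would suffice to exhibit a generic family---the $n$-ary analogue of $\cT_n$ built from the chain above---and then deduce the bound for all degenerations by upper semicontinuity of the closed conditions $\mathcal N^{F_N(k)+1}=0$. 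Either way, sharpness is witnessed by this extremal family, completing the proof.
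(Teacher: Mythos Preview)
The statement you are attempting to prove is labeled \emph{Conjecture} in the paper and is presented as an open problem; the paper offers no proof, only a remark that the bound would be sharp and an extremal example witnessing sharpness. So there is no ``paper's own proof'' to compare against --- any valid argument you produce would settle an open question, not reproduce known work.

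Your proposal is a plausible strategy but not a proof, and you say so yourself. Part~(B), the $n$-ary analogue of Lemma~\ref{lemmanti}, is indeed routine and your sketch is fine. The substantive content is Part~(A), bounding the nilpotency index of a $k$-dimensional nilpotent $(A,C)$-commutative $n$-ary algebra by $F_N(k)+1$, and here you have identified the extremal chain and the correct recursion but have not established extremality. The sentence ``I expect this to require an exchange/rearrangement argument\dots together with an induction on $k$'' is an honest admission that the key step is missing. The difficulty is real: a basis vector of weight $w_m$ need not arise from a single $n$-ary product of vectors of weights exactly $w_{m-1},\dots,w_{m-a}$; it may arise from many overlapping products of vectors whose weights are distributed quite differently among the available $k$ slots, and the interaction between the distinctness constraint on the $A$-slots and the dimension constraint $k$ is precisely what one must control. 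Neither your rearrangement idea nor the geometric alternative (building an $n$-ary analogue of $\cT_n$ and proving it generic) is carried out, and the paper's methods for the binary case do not transfer automatically --- the anticommutative bound in Corollary~\ref{coranti} relied on the explicit degeneration Theorem~\ref{Tn:generic}, whose $n$-ary analogue is itself open.

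In short: the paper does not prove this; your outline is reasonable but the central extremality claim in Part~(A) is a genuine gap, and closing it would amount to resolving the conjecture.
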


\begin{remark}
If the conjecture is true, then the bound is sharp and it gives the sharp bound for the nilpotency index of 
$k$-dimensional nilpotent $(a, c)$-commutative $n$-ary   algebras. 
In the case of $k$-dimensional $(a, c)$-commutative $n$-ary   algebras,
it is confirmed by the following $n$-ary algebra $\mathcal N$ with the multiplication given by  \begin{center}
    $[e_{j-a+1}, \ldots, e_{j-1}, e_j, \ldots, e_j ]=e_{j+1},$ \ $ a \leq j \leq k-1.$
\end{center}
For arbitrary  $(A,C)$-commutative $n$-ary algebras, 
an extremal example can be obtained by a similar way using a suitable permutation of the indices in the above multiplication.
\end{remark}


\begin{thebibliography}{}
 
    
 
 

 


\bibitem{ale3}
Alvarez M.A., 
Degenerations of $8$-dimensional $2$-step nilpotent Lie algebras,
Algebras and Representation Theory, 2020, DOI: 10.1007/s10468-020-09987-5
 
 
 
 




\bibitem{bremner}
Bremner M., Madariaga S., 
    Jordan quadruple systems, 
    Journal of Algebra, 412 (2014), 51--86.

 
 

\bibitem{BC99} 
Burde D., Steinhoff C.,
    Classification of orbit closures of $4$--dimensional complex Lie algebras,
    Journal of Algebra, 214 (1999), 2, 729--739.
 
 
   \bibitem{cantarini}  
   Cantarini N., Kac V., 
    Classification of linearly compact simple algebraic $N=6$ $3$-algebras, 
    Transformation Groups,  16 (2011), 3, 649--671.
 
   \bibitem{chouhy}
Chouhy S.,
    On geometric degenerations and Gerstenhaber formal deformations,
    Bulletin of the London Mathematical Society, 51 (2019),  5, 787--797.

   \bibitem{cibils}  Cibils C., 
    $2$-nilpotent and rigid finite-dimensional algebras,
    Journal of the London Mathematical Society (2), 36 (1987), 2, 211--218.

 
 \bibitem{hader}
  Elgendy H., 
    The universal associative envelope of the anti-Jordan triple system of $n \times n$ matrices, 
    Journal of Algebra, 383 (2013), 1--28.
    
\bibitem{fkkv}
Fern\'andez Ouaridi A.,  Kaygorodov I.,  Khrypchenko M., Volkov Yu., 
    Degenerations of nilpotent algebras,
    arXiv:1905.05361

\bibitem{fF68}
Flanigan F.\ J., 
    Algebraic geography: {V}arieties of structure constants, 
    Pacific Journal of Mathematics, 27 (1968), 71--79.
    
   \bibitem{gabriel}
Gabriel P.,
Finite representation type is open,
Proceedings of the International Conference on Representations of Algebras (Carleton Univ., Ottawa, Ont., 1974), pp. 132--155.
 
\bibitem{ger63}
Gerstenhaber M.,
    On the deformation of rings and algebras,
    Annals of Mathematics (2), 79 (1964), 59--103.
	

\bibitem{gorb93} 
Gorbatsevich V., 
    Anticommutative finite-dimensional algebras of the first three levels of complexity, 
    St. Petersburg Mathematical Journal, 5 (1994), 505--521.

 
\bibitem{GAB92}
Goze M., Ancoch\'{e}a-Berm\'{u}dez J.\ M.,
    On the varieties of nilpotent {L}ie algebras of dimension {$7$} and {$8$},
    Journal of Pure and Applied Algebra, 77 (1992), 131--140.



\bibitem{GRH}
Grunewald F.,  O'Halloran J.,
    Varieties of nilpotent Lie algebras of dimension less than six,
    Journal of Algebra, 112 (1988), 2, 315--325.

 \bibitem{guterman}
 Guterman A.,  Kudryavtsev D., 
    Upper bounds for the length of non-associative algebras,  
    Journal of Algebra, 544 (2020), 483--497.

 
 \bibitem{ikp20}
 Ignatyev M.,  Kaygorodov I., Popov Yu., 
  The geometric classification of $2$-step nilpotent algebras   and applications, preprint

 
\bibitem{ikv17}
Ismailov N., Kaygorodov I.,  Volkov Yu.,
    The geometric classification of Leibniz algebras,
    International Journal of Mathematics, 29  (2018), 5, 1850035.


 
 
 
 

\bibitem{kkl19}
Kaygorodov I., Khrypchenko M., Lopes S.,  
    The algebraic and geometric classification of nilpotent anticommutative algebras, 
    Journal of Pure and Applied Algebra,  224  (2020), 8, 106337. 
  
 
\bibitem{kkp19geo}
Kaygorodov I.,  Khrypchenko M.,  Popov Yu.,
    The algebraic and geometric classification of nilpotent terminal algebras,
     Journal of Pure and Applied Algebra,  225 (2021), 6, 106625.




 
  

 

\bibitem{kpv19}
Kaygorodov I., P\'{a}ez-Guill\'{a}n  P., Voronin V.,  
    The algebraic and geometric classification of nilpotent bicommutative algebras,
    Algebras and Representation Theory, 23 (2020), 6, 2331-2347.


 

 

 \bibitem{kv17}
Kaygorodov I., Volkov Yu., 
    Complete classification of algebras of level two,  
    Moscow Mathematical Journal, 19 (2019), 3,  485--521.
 

\bibitem{kv19}
 Kaygorodov I.,  Volkov Yu.,
The variety of $2$-dimensional algebras over an algebraically closed field,
Canadian Journal of Mathematics,  71 (2019),  4, 819--842.



 \bibitem{kv20}
Kaygorodov I.,  Volkov Yu., 
    Degenerations of Filippov algebras,  
    Journal of Mathematical Physics, 61 (2020), 2, 021701, 10 pp. 
 
 


\bibitem{maz79}
Mazzola G.,
    The algebraic and geometric classification of associative algebras of dimension five, 
    Manuscripta Mathematica, 27 (1979), 1, 81--101. 


\bibitem{NR66}
Nijenhuis A., Richardson R.\ W., Jr.,
    Cohomology and deformations in graded {L}ie algebras, 
    Bulletin of the American Mathematical Society, 72 (1966), 1--29. 
 
 
 
\bibitem{Paz84} Paz A.,  
    An application of the Cayley--Hamilton theorem to matrix polynomials in several variables, 
    Linear Multilinear Algebra,   15 (1984), 2,  161--170.

 
\bibitem{shaf}
    Shafarevich I., 
    Deformations of commutative algebras of class $2,$ Leningrad Mathematical Journal, 2 (1991), 6, 1335--1351.


\bibitem{SpeR60} Spencer A.J.M., Rivlin R.S., 
 Further results in the theory of matrix polynomials, 
 Archive for Rational Mechanics and Analysis,   4 (1960), 214--230.

\bibitem{shitov}
Shitov Ya., 
    An improved bound for the lengths of matrix algebras,  
    Algebra and Number Theory, 13 (2019), 6, 1501--1507.


\bibitem{SS:method}
Skjelbred T., Sund T.,
    Sur la classification des algebres de Lie nilpotentes,
    C. R. Acad. Sci. Paris Ser. A-B, 286 (1978), 5,  A241--A242.

 
 
 
 
 
 

















\end{thebibliography}
\end{document}